\documentclass[a4paper,10pt,reqno]{amsart}
\usepackage{amsthm,amsmath,amssymb,amsfonts}
\usepackage[T1]{fontenc}
\usepackage[utf8]{inputenc}
\usepackage{graphicx}
\usepackage{bbold}
\usepackage{xcolor}
\usepackage{caption}
\usepackage{enumitem}
\usepackage{mathtools}
\usepackage{microtype}
\usepackage{stmaryrd}
\usepackage{tikz-cd}
\usepackage{multirow}
\usepackage{float}
\usepackage{quiver}
\usepackage{hyperref}
\hypersetup{
    colorlinks = true,
    linkbordercolor = {red},
   	linkcolor ={magenta},
	anchorcolor = {pink},
	citecolor =  {magenta},
	filecolor = {blue},
	menucolor = {blue},
	runcolor =  {blue},
	urlcolor = {magenta},
}

\newtheorem{theorem}{Theorem}[section]
\newtheorem{lemma}[theorem]{Lemma}
\newtheorem{proposition}[theorem]{Proposition}
\newtheorem{corollary}[theorem]{Corollary}
\newtheorem{claim}[theorem]{Claim}

\theoremstyle{definition}
\newtheorem{definition}[theorem]{Definition}

\theoremstyle{remark}
\newtheorem{remark}[theorem]{Remark}

\newtheorem*{caveat}{Caveat}

\newcommand{\N}{\mathbb{N}}				
\newcommand{\Z}{\mathbb{Z}}				

\renewcommand{\phi}{\varphi}
\renewcommand{\epsilon}{\varepsilon}

\newcommand{\cat}{{\sf C}}								
\newcommand{\catt}{{\sf D}}								
\newcommand{\Set}{{\sf Set}}								
\newcommand{\Pos}{{\sf Pos}}								
\newcommand{\Tot}{{\sf Tot}}								
\newcommand{\CHaus}{{\sf CHaus}}							
\newcommand{\Tych}{{\sf Tych}}							
\newcommand{\AMon}{{\sf AbMon}}							
\newcommand{\AGrp}{{\sf AbGrp}}							
\newcommand{\COAMon}{\sf Q} 
\newcommand{\OAGrp}{{\sf OAbGrp}}						
\newcommand{\B}[1]{\mathcal{B}(#1)}						
\newcommand{\Alg}[1]{\sf{Alg}(#1)}							
\newcommand{\Sat}{{\sf Sat}}								
\newcommand{\Abs}{{\sf Abs}}								
\newcommand{\K}{{\sf K}}									
\newcommand{\M}{{\sf M}}								
\newcommand{\DL}{{\sf DL}}								
\newcommand{\BA}{{\sf BA}}								
\newcommand{\Rng}{{\sf Rng}}								
\newcommand{\RegRng}{{\sf RegRng}}						
\newcommand{\RedRng}{{\sf RedRng}}						
\newcommand{\Field}{{\sf Field}}							
\newcommand{\fp}[1]{{#1}_{\omega}}						
\newcommand{\Lfp}{{\sf Lfp}}								
\newcommand{\Lex}{{\sf Lex}}								
\newcommand{\Cat}{{\sf Cat}}								
\newcommand{\CAT}{{\sf CAT}}								

\newcommand{\ar}[1]{\mathcal{#1}}							
\newcommand{\X}{\ar{X}}									
\newcommand{\id}{\mathrm{id}}								
\newcommand{\into}{\hookrightarrow}						
\newcommand{\emb}{\rightarrowtail}							

\newcommand{\op}{{\mathrm{op}}}						
\newcommand{\ort}[1]{{#1}^{\perp}}						
\DeclareMathOperator{\Ext}{Ext}						
\DeclareMathOperator{\EExt}{Ext_{p}}					
\DeclareMathOperator{\Epi}{Epi}						
\newcommand{\colim}{\operatornamewithlimits{colim}}		

\renewcommand{\leq}{\leqslant}

\newcommand{\IMP}{\Rightarrow}						
\newcommand{\IFF}{\Leftrightarrow}						

\newcommand*\cocolon{%
        \nobreak
        \mskip6mu plus1mu
        \mathpunct{}%
        \nonscript
        \mkern-\thinmuskip
        {:}%
        \mskip2mu
        \relax
}

\newcommand{\ES}{\textnormal{ES}}

\newcommand{\SES}{\textnormal{SES}}
\newcommand{\AP}{\textnormal{AP}}
\newcommand{\IPA}{\textnormal{IPA}}
\newcommand{\TM}{\textnormal{TM}}
\newcommand{\SAP}{\textnormal{SAP}}
\newcommand{\EAP}{\textnormal{EAP}}
\newcommand{\TEM}{\textnormal{TEM}}

\title{Beth companions of finitary essentially algebraic theories}

\author{Ivan Di Liberti}
\address{
Department of Philosophy, Linguistics and Theory of Science\newline
University of Gothenburg\newline
Gothenburg, Sweden
}
\email{diliberti.math@gmail.com}

\author{Luca Reggio}
\address{
Dipartimento di Matematica {\it Federigo Enriques}\newline 
Universit\`a degli Studi di Milano\newline 
Milan, Italy
}
\email{luca.reggio@unimi.it}

\begin{document}

\begin{abstract}
For categories, being balanced (meaning that every arrow that is both epic and monic is an isomorphism) can be regarded as a strong tameness property which plays an important role, for example, in algebra and logic.

We study the problem of associating a balanced companion category, called \emph{Beth companion}, to a locally finitely presentable category (equivalently, the category of models of a finitary essentially algebraic theory). We show that, if it exists, the Beth companion is unique and can be described in terms of \emph{saturated} objects. Under some additional assumptions, we prove that Beth companions can be computed as orthogonality classes, and admit a syntactic presentation via Gabriel--Ulmer duality. Finally, we establish conditions for the transfer of properties, ensuring, for instance, that if the original category is equivalent to a (quasi)variety, its Beth companion is too.
\end{abstract}

\maketitle

   {
   \hypersetup{linkcolor=black}
   \tableofcontents
   }

\section{Introduction}

The problem of establishing whether all epimorphisms in a class of algebras (or related structures) are surjective --- and, more generally, of characterising epimorphic images of algebras --- has a long tradition. See e.g.\ \cite{Isbell1966,HI1967,Storrer1968,SAC1968,Reid1969} for some earlier work on this topic. For many interesting classes of structures, this is equivalent to asking whether they form a \emph{balanced} category, meaning that every morphism that is both epic and monic is also an isomorphism. The question of characterising the algebraic theories (in the sense of Lawvere) such that the corresponding categories of algebras are balanced, explicitly stated in \cite[p.~57]{Lawvere1968} and still open to this day, has led to the investigation of \emph{implicit partial operations}. 

The relationship between balancedness and implicit operations has been studied by several authors, and especially H\'ebert (see \cite{Hebert1993,Hebert1998}); for an overview of this topic, see~\cite{Reggio2026}. 
For instance, a quasivariety of algebras\footnote{A class of algebras is a \emph{quasivariety} if it is closed under isomorphic copies, subalgebras, Cartesian products, and ultraproducts; a \emph{variety} is a quasivariety that is closed under homomorphic images.} (considered as a category, with morphisms the homomorphisms) is balanced if, and only if, every implicit partial operation from a certain class can be explicitly defined by a term. 
This suggests a strong link with Beth definability properties in logic, stating that implicit concepts admit explicit definitions. For algebraic (propositional) logics, the precise sense in which balancedness corresponds to a Beth definability property was identified by N\'emeti for classes of algebras, cf.~\cite[Theorem~5.6.10]{HMT1985}, and extended to a purely logical setting by Hoogland and Blok \cite{Hoogland2000,BH2006}. To wit, an \emph{algebraisable} (or, more generally, \emph{equivalential}) logic satisfies the \emph{infinite Beth property} if, and only if, the corresponding class of algebras is balanced.  

The present paper addresses the following problem: given a non-balanced category, is it possible to associate with it a ``companion'' category in which balancedness is enforced? Such a category, called a \emph{Beth companion}, is required to satisfy some further properties similar to those of compactifications in topology. For instance, the Stone--\v{C}ech compactification embeds each Tychonoff space as a dense subspace of a compact Hausdorff space. This exhibits the category $\CHaus$ of compact Hausdorff spaces and continuous maps as a full balanced mono-reflective subcategory of the category $\Tych$ of Tychonoff spaces and continuous maps. It would seem reasonable to regard $\CHaus$ as a Beth companion of $\Tych$.

As our motivation primarily stems from algebra, we shall develop the theory of Beth companions for locally finitely presentable (lfp) categories. These can be characterised as the categories of models of finitary \emph{essentially algebraic theories}, which generalise algebraic theories by allowing partially defined operations; every quasivariety of algebras is lfp (see Section~\ref{s:lfp} for more details). Since Beth companions are tightly related to the problem of totalising certain partial operations, essentially algebraic theories provide a natural environment. In this context, the Beth companion $\B{\cat}$ of an lfp category $\cat$ is a full balanced mono-reflective subcategory of $\cat$ in the category of lfp categories (Definition~\ref{def:Beth-com}).

In Section~\ref{s:preliminaries}, we recall the necessary background material. In Section~\ref{s:Beth-saturated} we show that, whenever it exists, the Beth companion of an lfp category $\cat$ is unique and coincides with the full subcategory $\Sat(\cat)$ of \emph{saturated} objects (Theorem~\ref{t:Beth-companion-unique}).\footnote{The categories $\Tych$ and $\CHaus$ do not fit into our framework as they are not locally presentable. Nevertheless, note that compact Hausdorff spaces are precisely the saturated objects in~$\Tych$.} The importance of the latter result resides in the fact that it reduces the question of whether an lfp category admits a Beth companion to an axiomatisability problem (see Remark~\ref{rmk:axiomatisability-Beth-comp}). By replacing saturated objects with \emph{absolutely closed} ones, we obtain in Section~\ref{s:strong-Beth-sbs-closed} a description of the \emph{strong Beth companion}, in which balancedness is strengthened to the requirement that every monomorphism be regular. Saturated and absolutely closed objects can be regarded as relaxations of the concept of an absolutely pure module (see Remark~\ref{rm:purity}). 

In Section~\ref{s:ort-classes-as-Beth-companions} we prove that, under mild assumptions on~$\cat$, its full subcategory of saturated objects coincides with the localisation at all the epi-monos (Lemma~\ref{l:ort-vs-sat}). Furthermore, $\cat$ admits a Beth companion if, and only if, $\Sat(\cat)$ is closed in $\cat$ under directed colimits (Corollary~\ref{cor:Beth-TEM-omega-ort}); the latter is a ``finitary axiomatisability'' condition, and holds precisely when $\Sat(\cat)$ is an $\omega$-injectivity class in the sense of~\cite{RAB2022}. The transfer of properties from $\cat$ to its Beth companion is the topic of Section~\ref{s:transfer-properties}. In particular, we identify sufficient conditions ensuring that if $\cat$ is equivalent to a (quasi)variety, then $\B{\cat}$ is too (Theorem~\ref{t:transfer-quasi-variety}). 

In Section~\ref{s:syntactic-viewpoint} we explain how, under appropriate assumptions, the Beth companion is an $\omega$-orthogonality class in $\cat$ (in the sense of, e.g., \cite[Definition~1.35]{ar94book}) and can therefore be described dually --- via Gabriel--Ulmer duality for lfp categories --- as a coinverter at the level of the corresponding theories (Theorem~\ref{t:syntactic-Beth-companion}). Section~\ref{s:examples} collects several examples of lfp categories that do or do not admit a Beth companion.
Finally, for the benefit of the reader, in Appendix~\ref{s:acronyms} we provide a list of the balancedness- and amalgamation-type properties employed in the paper, along with pointers to the corresponding definitions.

\subsubsection*{Related work}
Strong Beth companions for classes of algebras, and especially quasivarieties, were introduced independently in~\cite{CKM2026}. The approach adopted there relies on implicit operations and universal algebra. The relationship between their notion of Beth companion and ours remains unclear. However, for quasivarieties with the amalgamation property, their notion of Beth companion is a strong Beth companion in our~sense (see Remark~\ref{rmk:comparison-Beth-companions}).

\section{Preliminaries}\label{s:preliminaries}

\subsection{Locally finitely presentable categories}\label{s:lfp}

We review the basic facts concerning locally finitely presentable categories; for a thorough treatment, see~\cite{ar94book}. Let $\Set$ denote the category of sets and functions. An object $X$ of a locally small category $\cat$ is \emph{finitely presentable} (respectively, \emph{finitely generated}) if the associated hom-functor $\cat(X,-)\colon \cat\to\Set$ preserves directed colimits (respectively, directed colimits of monomorphisms).

\begin{definition}
A locally small category $\cat$ is \emph{locally finitely presentable} (\emph{lfp}, for short) if it is cocomplete and admits a set $\sf G$ of finitely presentable objects such that every object of $\cat$ is a directed colimit of objects from $\sf G$.
\end{definition}

Lfp categories are precisely the categories of models of (possibly multi-sorted) \emph{finitary essentially algebraic theories} \cite[Theorem~3.36]{ar94book}. Roughly speaking, these generalise finitary equational theories by allowing function symbols that are interpreted as partial operations whose domains are specified by equations; see~\S 3.D in op.~cit.\ for a precise definition.
Any Horn class of first-order structures, with morphisms the homomorphisms, forms an lfp category. In particular, every finitary quasivariety of algebras is lfp; finitely presentable objects in the above sense coincide with finitely presentable algebras in the usual sense. An example of an lfp category that is not equivalent to any Horn class of first-order structures is $\Cat$, the category of small categories; see~\cite{Barr1989}.

Recall that an epimorphism $e$ is a \emph{strong epimorphism} if it has the left lifting property with respect to all monomorphisms; that is, for any commutative square
\[\begin{tikzcd}[cells={nodes={inner sep=2pt, outer sep=1pt}}]
\phantom{\cdot} \arrow{r} \arrow{d}[swap]{e} & \phantom{\cdot} \arrow{d}{m} \\
\phantom{\cdot} \arrow{r} \arrow[dashed]{ur}[description]{d} & \phantom{\cdot}
\end{tikzcd}\]
with $m$ monic, there is a unique arrow $d$ making the two triangles commute. \emph{Strong monomorphisms} are defined dually. Every lfp category has a (strong epi, mono) factorisation system \cite[Proposition~1.61]{ar94book}, which coincides with the usual (surjective, injective) factorisation system in any quasivariety of algebras.

Every lfp category is complete \cite[Corollary~1.28]{ar94book}, and the appropriate notion of morphism between lfp categories is that of an \emph{lfp morphism}, namely a functor that preserves limits and directed colimits. This definition of lfp morphism is motivated by Gabriel--Ulmer duality, cf.\ Section~\ref{s:syntactic-viewpoint}. Any lfp morphism has a left adjoint which preserves finitely presentable objects \cite[Theorem~1.66]{ar94book}.

\subsection{Balancedness-type properties}\label{s:definability-props}
Recall that a monomorphism $m$ is an \emph{extremal monomorphism} provided that, for any decomposition $m= f\circ e$, if $e$ is epic then it is an isomorphism. \emph{Extremal epimorphisms} are defined dually. 
It is a simple observation that the following conditions are equivalent for any category $\cat$:
\begin{enumerate}[label=(\arabic*)]
\item $\cat$ is balanced;
\item every monomorphism in $\cat$ is extremal;
\item\label{i:every-epi-is-extremal} every epimorphism in $\cat$ is extremal.
\end{enumerate}

If $\tau$ is an algebraic signature, the extremal epimorphisms in the category $\Alg{\tau}$ of $\tau$-algebras and homomorphisms between them are precisely the surjective homomorphisms. This property is inherited by any full subcategory that is closed in $\Alg{\tau}$ under binary products and subalgebras.
For this reason, in the universal algebra literature, property~\ref{i:every-epi-is-extremal} above is referred to as the \emph{\ES~property} (``Epimorphisms are Surjective'').
We record this notion, along with two important variations:

\begin{definition}\label{d:ES-and-SES}
A category $\cat$ is said to have the
\begin{enumerate}
\item[(\ES)]\label{ES} \emph{ES property} if every epi in $\cat$ is an extremal epi;
\item[(\SES)]\label{SES} \emph{strong ES property} if every mono in~$\cat$ is a regular mono.
\end{enumerate}
\end{definition}

Note that $\SES\IMP\ES$ because the implications
\[
\text{regular mono} \ \IMP \ \text{strong mono} \ \IMP \ \text{extremal mono}
\]
hold in any category $\cat$. Furthermore, every extremal mono is a strong mono if $\cat$ has pushouts. The dual statements hold for regular, strong, and extremal epis. Thus, since lfp categories are complete and cocomplete, in any such category extremal monos coincide with strong monos, and extremal epis coincide with strong epis.

Following Isbell~\cite{Isbell1966}, let us recall the notions of saturated and absolutely closed objects, which will play a key role in the following.

\begin{definition}
Let $A$ be an object of a category $\cat$. We say that $A$ is
\begin{enumerate}[label=(\roman*)]
\item \emph{saturated} if every monomorphism in $\cat$ with domain $A$ is extremal (equivalently, if every epi-mono in $\cat$ with domain $A$ is an isomorphism);
\item \emph{absolutely closed} if every monomorphism in $\cat$ with domain $A$ is regular.
\end{enumerate}
We denote by $\Sat(\cat)$ and $\Abs(\cat)$ the full subcategories of $\cat$ defined by the saturated objects and the absolutely closed objects, respectively.
\end{definition}

Every absolutely closed object is saturated. Moreover, a category~$\cat$ has the $\ES$ property precisely when $\Sat(\cat)=\cat$, and it has the $\SES$ property precisely when $\Abs(\cat)=\cat$. Let us mention in passing that saturated and absolutely closed objects can be characterised through (a categorical variant of) the notion of \emph{dominion}, introduced by Isbell in the aforementioned work. This viewpoint, however, will not be exploited in the present paper.

\begin{remark}\label{rm:purity}
Recall from \cite[Definition~2.27]{ar94book} that, in any category, one can define the notion of an $\omega$-pure morphism which generalises the concept of a pure embedding of modules. Namely, an arrow $f\colon X\to Y$ is \emph{$\omega$-pure} if, given any commutative square as displayed below, with $A,B$ finitely presentable,
\[\begin{tikzcd}
A \arrow{d} \arrow{r} & X \arrow{d}{f} \\
B \arrow{r} \arrow[dashed]{ur} & Y
\end{tikzcd}\]
there is an arrow $B\to X$ making the upper triangle commute. 

Let us call an object $X$ \emph{absolutely $\omega$-pure} if every monomorphism with domain $X$ is $\omega$-pure. In the category of $R$-modules, for $R$ any ring, the absolutely $\omega$-pure objects are the absolutely pure modules in the usual sense (see e.g.\ \cite{Maddox1967}). In the category of fields, they are the algebraically closed fields. See~\cite{Rothmaler1997} for a nice survey on this topic. As shown in \cite[Proposition~2.31]{ar94book}, every $\omega$-pure morphism in an lfp category is a regular monomorphism. Therefore, every absolutely $\omega$-pure object in an lfp category is absolutely closed and, a fortiori, saturated.
\end{remark}

\begin{caveat}
In universal algebra, it is customary to consider injective and surjective homomorphisms between algebras of a given class $\K$. If we regard the latter as a category, with arrows the homomorphisms, the notions of ``surjective'' and ``injective'' are external, as they refer to the forgetful functor to the category of sets. If $\K$ is a variety or a quasivariety of algebras, then the injective (respectively, surjective) homomorphisms between $\K$-algebras coincide with the monomorphisms (respectively, regular epimorphisms or, equivalently, extremal epimorphisms) in $\K$. However, this need not be the case for an arbitrary class of algebras. 

Thus, the categorical formulations of the definability-type properties given above may differ from those in the universal algebra literature when $\K$ is not a quasivariety. A similar warning applies to the amalgamation-type properties in Section~\ref{s:preliminaries-amalgamation}.
\end{caveat}

\subsection{Amalgamation-type properties}\label{s:preliminaries-amalgamation}

A \emph{span of morphisms} in a category $\cat$ is a pair of arrows in $\cat$ with a common domain.
 An amalgamation-type property typically states that any span
 \[\begin{tikzcd}
\phantom{\cdot} & \phantom{\cdot} \arrow{l}[swap]{\alpha} \arrow{r}{\beta} & \phantom{\cdot}
\end{tikzcd}\] 
in a certain class can be completed into a commutative square of the form
\begin{equation}\label{eq:generic-amalgam}
\begin{tikzcd}[cells={nodes={inner sep=2pt, outer sep=1pt}}]
\phantom{\cdot} \arrow{r}{\alpha} \arrow{d}[swap]{\beta} & \phantom{\cdot} \arrow[dashed]{d}{\delta} \\
\phantom{\cdot} \arrow[dashed]{r}{\gamma} & \phantom{\cdot}
\end{tikzcd}
\end{equation}
with certain properties.

\begin{definition}\label{d:AP-SAP-IPA}
A category $\cat$ is said to have
\begin{enumerate}
\item[(AP)]\label{AP} the \emph{amalgamation property} if any span of monomorphisms $\alpha,\beta$ can be completed to a commutative square as in eq.~\eqref{eq:generic-amalgam} with $\gamma, \delta$ monic;
\item[(SAP)]\label{SAP} the \emph{strong amalgamation property} if any span of monomorphisms $\alpha,\beta$ can be completed to a commutative square as in eq.~\eqref{eq:generic-amalgam}, with $\gamma, \delta$ monic, that is also a pullback square;
\item[(IPA)]\label{IPA} the \emph{intersection property of amalgamations} if any span of monomorphisms $\alpha,\beta$ that can be completed to a commutative square as in eq.~\eqref{eq:generic-amalgam}, with $\gamma, \delta$ monic, can also be completed to a pullback square of monomorphisms.
\end{enumerate}
\end{definition}

Note that $\SAP\IFF \AP + \IPA$ (that is, a category satisfies $\SAP$ if, and only if, it satisfies both $\AP$ and $\IPA$).

\begin{remark}
Assume that $\cat$ admits pushouts of spans of monomorphisms. Then $\cat$ has AP if, and only if, monomorphisms are stable under pushouts along monomorphisms. Furthermore, $\cat$ has SAP if, and only if, it has AP and every pushout square consisting entirely of monomorphisms is also a pullback square.
\end{remark}

The following result, due to Tholen (cf.\ \cite[Proposition~1 and Theorem~1]{Tholen1982}), shows that $\SES\IFF\IPA$ for a broad class of categories, including all lfp categories:
\begin{proposition}\label{l:IPA-iff-SES}
Let $\cat$ be any category. Then $\IPA\IMP \SES$, and the converse implication holds if $\cat$ admits cokernel pairs of monomorphisms.
\end{proposition}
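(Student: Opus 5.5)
For $\IPA\IMP\SES$, I will take a monomorphism $m\colon A\to B$ and show that it is the equaliser of some parallel pair out of $B$. Since $m$ is monic, the span $(m,m)$ completes to the commutative square with $\gamma=\delta=\id_B$, both of which are monic. By $\IPA$ the span therefore also completes to a pullback square
\[
\begin{tikzcd}
A \arrow{r}{m} \arrow{d}[swap]{m} & B \arrow{d}{\delta} \\
B \arrow{r}{\gamma} & C
\end{tikzcd}
\]
with $\gamma,\delta$ monic. The key step is to check that $m$ is an equaliser of $\gamma,\delta\colon B\rightrightarrows C$.

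Let $x\colon X\to B$ satisfy $\gamma x=\delta x$. Then $(x,x)$ is a cone over the cospan $(\gamma,\delta)$, so the pullback property gives a unique $u\colon X\to A$ with $mu=x$. Uniqueness of $u$ also follows directly from $m$ being monic. Hence $m$ is a regular monomorphism, which gives $\SES$.

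For the converse, I assume $\SES$ and that cokernel pairs of monos exist. Let $\alpha\colon A\to B$ and $\beta\colon A\to C$ be monos completed by monos $\gamma\colon C\to D$ and $\delta\colon B\to D$ with $\delta\alpha=\gamma\beta$. I plan to show that the square itself is already a pullback, so no new completion is needed.

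Here is the plan. The composite $\mu=\delta\alpha=\gamma\beta\colon A\to D$ is monic, so by $\SES$ it is regular. In a category with cokernel pairs of monos, a regular mono is the equaliser of its cokernel pair $p,q\colon D\rightrightarrows P$. Now let $x\colon X\to B$ and $y\colon X\to C$ satisfy $\delta x=\gamma y=:z$. I need to produce $u$ with $\alpha u=x$ and $\beta u=y$; equivalently, I need $z$ to factor through $\mu$, that is, $pz=qz$.

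That equation does not seem automatic, and this is where I expect the main obstacle. It would hold if $\delta$ factoring through $\mu$ were available, but $\delta$ only contains $\mu$.

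So I would refine the argument as follows. Use that $\alpha$ is regular, being monic, and hence is the equaliser of its cokernel pair $r,s\colon B\rightrightarrows B+_A B$. Form the pushout of $\delta$ along $r$ and $s$, or more simply consider the cokernel pair of $\mu$ and compare it with $B+_AB$ via the induced map $B+_AB\to D+_AD$.

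Concretely, $x$ factors through $\alpha$ if $rx=sx$. From $\delta x=\gamma y$, together with the fact that $\gamma y$ lies in the image of $C$, I want to deduce that $rx=sx$ after composing with a mono out of $B+_AB$. The candidate mono is built from the pushout $B+_A C$ and $\SES$. I expect this comparison, namely showing that the relevant induced map is monic, to be the delicate step.

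I would follow Tholen's argument at this point, since that is where the hypothesis on cokernel pairs genuinely enters.
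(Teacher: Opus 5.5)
Your proof of $\IPA\IMP\SES$ is correct and is the standard argument. The converse is not proved, though, and the route you set out on cannot work. You try to show that the given commutative square of monos $\delta\circ\alpha=\gamma\circ\beta$ is itself a pullback. That is false in general, even in $\Set$, which satisfies \SES{} and is cocomplete: take $A=\emptyset$, $B=C=D=1$ and $\gamma=\delta=\id_{1}$. The square commutes and consists of monos, but the pullback of $\gamma,\delta$ is $1$, not $\emptyset$. So the obstruction you noticed ($p\circ z=q\circ z$ is not automatic) is real, not a technicality. \IPA{} only asks for \emph{some} pullback completion, so the square has to be changed. Your ``refinement'' stays at the level of intentions and is deferred to Tholen, so the key step is missing. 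Note also that the pushout $B+_{A}C$ you invoke need not exist under the hypothesis, which only guarantees cokernel pairs of monos.

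The fix uses the ingredient you already isolated.
\begin{itemize}
\item Let $p,q\colon D\rightrightarrows P$ be the cokernel pair of the mono $\mu=\delta\circ\alpha=\gamma\circ\beta$. By \SES{}, $\mu$ is regular, hence it is the equaliser of $p,q$.
\item Complete the span by $p\circ\delta$ and $q\circ\gamma$ instead of $\delta$ and $\gamma$. The square commutes because $p\circ\mu=q\circ\mu$.
\item The codiagonal $\nabla\colon P\to D$, induced by $\id_{D},\id_{D}$, satisfies $\nabla\circ p=\nabla\circ q=\id_{D}$. So $p,q$ are split monos, and $p\circ\delta$ and $q\circ\gamma$ are monic.
\item Suppose $p\circ\delta\circ x=q\circ\gamma\circ y$. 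Applying $\nabla$ gives $z\coloneqq\delta\circ x=\gamma\circ y$, and then $p\circ z=q\circ z$. Hence $z=\mu\circ u$ for some $u$.
\item Since $\delta$ and $\gamma$ are monic, $x=\alpha\circ u$ and $y=\beta\circ u$. Uniqueness of $u$ follows from $\alpha$ being monic.
\end{itemize}
Thus the new square is a pullback of monos. Intuitively, you place $B$ and $C$ in the two different copies of $D$ glued along $A$, which forces their intersection to be exactly $A$.
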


Furthermore, it follows from \cite[Proposition~2 and Theorem~2]{Tholen1982} that, in any lfp category $\cat$ satisfying \AP, extremal monomorphisms coincide with regular monomorphisms. Therefore, in such a category, $\Sat(\cat)=\Abs(\cat)$ and properties $\ES,\SES,\IPA$ and $\SAP$ are all equivalent.

We refer interested readers to~\cite{KMPT1983} for a wealth of examples of categories satisfying the balancedness- and amalgamation-type properties introduced above.

\subsection{Orthogonality and injectivity classes}

We review the concepts of orthogonality and injectivity, which play an important role in the context of lfp categories.

\begin{definition}\label{d:orthogonal}
Let $f\colon X\to Y$ be an arrow in a category $\cat$. An object $A$ of $\cat$ is \emph{injective with respect to $f$} if, for every arrow $g\colon X\to A$, there exists an arrow $Y\to A$ making the following diagram commute.
\[\begin{tikzcd}
X \arrow{d}[swap]{f} \arrow{r}{g} & A \\
Y \arrow[dashed]{ur} &
\end{tikzcd}\]
If the dashed arrows are unique, we say that $A$ is \emph{orthogonal to $f$}.
If $\ar{W}$ is a collection of arrows of $\cat$, we say that $A$ is \emph{injective with respect to $\ar{W}$} (respectively, \emph{orthogonal to $\ar{W}$}) if it is injective with respect to each member of $\ar{W}$ (respectively, orthogonal to each member of $\ar{W}$).
\end{definition}

\begin{remark}
The uniqueness condition in Definition~\ref{d:orthogonal} is met automatically if $f$ is epic. In this case, $A$ is orthogonal to $f$ just when it is injective with respect to~$f$.
\end{remark}

Given a class of arrows $\ar{W}$ in $\cat$, the full subcategory of $\cat$ defined by those objects that are orthogonal to $\ar{W}$ is denoted by $\ort{\ar{W}}$. 

\begin{definition}
A full subcategory of $\cat$ is a \emph{(small-)orthogonality class} if it is of the form $\ort{\ar{W}}$ for a (small) collection of morphisms $\ar{W}$ of $\cat$.
\end{definition}

Recall that a \emph{reflective subcategory} of a category $\cat$ is a full subcategory $\catt$ of~$\cat$ such that the inclusion functor $R\colon \catt\into\cat$ is right adjoint. The left adjoint $L$ to~$R$ is called \emph{reflector}.
Every full, \emph{replete} (i.e., isomorphism-closed) and reflective subcategory $\catt$ of $\cat$ is an orthogonality class. In fact, $\catt=\ort{\ar{W}}$, where $\ar{W}$ consists of the components of the unit $\eta$ of the adjunction $L\dashv R$; see e.g.\ \cite[Examples~1.33(1)]{ar94book}. 

However, an orthogonality class in $\cat$ may or may not be a reflective subcategory of~$\cat$; this is known as the \emph{orthogonal subcategory problem}.
This problem has a positive answer when the class of arrows under consideration consists of epimorphisms. This is a special case of more general results; see e.g.~\cite{FK1972,AHS2009}.

\begin{proposition}\label{p:localise-epis}
Let $\cat$ be an lfp category, and let $\ar{W}$ be a class of arrows in $\cat$. If each member of $\ar{W}$ is an epimorphism, then $\ort{\ar{W}}$ is a reflective subcategory of $\cat$. 
\end{proposition}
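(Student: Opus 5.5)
The plan is to build the reflection of an object $X$ into $\ort{\ar{W}}$ by transfinite iteration of pushouts along members of $\ar{W}$, the standard small-object style construction. Since $\cat$ is locally presentable, it is cocomplete and co-wellpowered (every locally presentable category is co-wellpowered, \cite[Theorem~1.58]{ar94book}). First I will reduce to a \emph{set} of epimorphisms. Given $X$, the key observation is that any map $X\to A$ with $A\in\ort{\ar{W}}$ factors through the quotients of $X$ arising from the construction. So only epimorphisms out of $X$, and out of its successive quotients, matter, and these form a set up to isomorphism by co-wellpoweredness.

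The construction itself goes as follows. For an object $Y$, let $E_Y$ be the family of all epimorphisms $Y\to Y'$ that arise as pushouts of some $w\colon P\to Q$ in $\ar{W}$ along some $g\colon P\to Y$. Pushouts of epis are epis. Up to isomorphism of quotients, $E_Y$ is a set, so we may form its wide pushout (cointersection) $Y\to Y^{+}$, which is again an epimorphism. Then I iterate: $X_0=X$, $X_{\alpha+1}=X_\alpha^{+}$, with colimits at limit stages. Every connecting map $X\to X_\alpha$ is epic, since colimits of chains of epis out of $X$ are epic. By co-wellpoweredness, the quotients $X\to X_\alpha$ cannot all be distinct, so the chain stabilises at some ordinal $\lambda$, where $X_\lambda\to X_\lambda^{+}$ is an isomorphism.

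The remaining steps are these.
\begin{enumerate}
\item \emph{$X_\lambda\in\ort{\ar{W}}$.} Given $w\colon P\to Q$ in $\ar{W}$ and $g\colon P\to X_\lambda$, the pushout of $w$ along $g$ is one of the maps in $E_{X_\lambda}$. That map is inverted, since it factors into the iso $X_\lambda\to X_\lambda^{+}$ and is epic, hence split-mono epic, hence iso. Composing its inverse with the pushout leg $Q\to X_\lambda$ gives the required extension. Uniqueness is automatic because $w$ is epic.
\item \emph{Universality.} Let $h\colon X\to A$ with $A$ orthogonal to $\ar{W}$. By transfinite induction, $h$ extends along each $X\to X_\alpha$. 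At a successor step, each pushout of $w$ along $g$ is handled by injectivity of $A$ with respect to $w$, giving a map out of the pushout. These maps are compatible over the wide pushout because all the maps out of $X_\alpha$ are epic, so any two extensions agree. Limit stages are immediate. Uniqueness of the final extension follows since $X\to X_\lambda$ is epic.
\end{enumerate}

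The main obstacle is the size issue. Pushouts of members of the possibly proper class $\ar{W}$ along arbitrary maps must be reduced to a set, and stabilisation of the chain must be guaranteed. Both rest entirely on co-wellpoweredness of lfp categories together with the fact that everything in sight is an epimorphism out of $X$. The point to check carefully is that colimits of chains and wide pushouts of epimorphisms remain epimorphisms, so that each $X\to X_\alpha$ really is a quotient of $X$ and the cardinality bound applies.
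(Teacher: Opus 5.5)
Your argument is correct, but it does not follow the paper, because the paper gives no proof of its own. It presents the statement as a special case of general solutions to the orthogonal subcategory problem \cite{FK1972,AHS2009}.

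Your construction is self-contained and uses nothing about $\cat$ beyond cocompleteness and co-wellpoweredness. It therefore works verbatim in any cocomplete co-wellpowered category. The cited results buy more generality, covering the wider situations the paper alludes to, at the cost of heavier machinery.

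Two steps deserve an extra line.
\begin{enumerate}
\item \emph{Stabilisation.} Suppose $X\to X_\alpha$ and $X\to X_\beta$, with $\alpha<\beta$, represent the same quotient. Then the connecting map $X_\alpha\to X_\beta$ is invertible, because $X\to X_\alpha$ is epic. Hence $X_\alpha\to X_{\alpha+1}$ is a split mono and an epi, so an iso. This also shows you only need to recurse up to an ordinal whose cardinality exceeds the number of quotients of $X$, which avoids a class-length recursion.
\item \emph{Universality.} Compatibility of the extensions over the wide pushout is automatic, since each extension restricts to the given map on $X_\alpha$. Epicness gives only their uniqueness.
\end{enumerate}

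Finally, the iteration can be collapsed into a single step. Let $e\colon X\to\bar X$ be the cointersection of all quotients of $X$ through which every map from $X$ into $\ort{\ar{W}}$ factors. Take any pushout $p$ of some $w\in\ar{W}$ along a map into $\bar X$. Then the quotient $p\circ e$ again has this factorisation property, by injectivity and the pushout property. So $e$ factors through $p\circ e$, which makes $p$ split monic and epic. Your Step~1 argument then gives $\bar X\in\ort{\ar{W}}$ directly.
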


The orthogonal subcategory problem has a positive answer also for \emph{$\omega$-orthogonality classes} in $\cat$, i.e.\ subcategories of the form $\ort{\ar{W}}$, where $\ar{W}$ is a set of arrows between finitely presentable objects of $\cat$. This is a special case of \cite[Theorem~1.39]{ar94book}.

\begin{theorem}\label{orthog-reflective}
Suppose that $\cat$ is an lfp category. Every $\omega$-orthogonality class in $\cat$ is a reflective subcategory closed under directed colimits in $\cat$ and, moreover, it is lfp.
\end{theorem}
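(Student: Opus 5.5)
This is a special case of \cite[Theorem~1.39]{ar94book}; if I wanted a self-contained argument, I would proceed as follows. Let $\ar{W}=\{w_i\colon A_i\to B_i\}_{i\in I}$ be a set of arrows with $A_i,B_i$ finitely presentable, and let $\catt=\ort{\ar{W}}$.

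\emph{Step 1: closure under limits and directed colimits.} Closure under limits is immediate. An object $X$ lies in $\ort{\ar{W}}$ exactly when each map $\cat(w_i,X)\colon\cat(B_i,X)\to\cat(A_i,X)$ is a bijection. These maps are natural in $X$, and bijections are stable under limits of sets, so the conclusion follows. For directed colimits, I use that $A_i$ and $B_i$ are finitely presentable. Hence $\cat(A_i,-)$ and $\cat(B_i,-)$ preserve directed colimits. So for a directed colimit $X=\colim X_j$ with all $X_j\in\catt$, the map $\cat(w_i,X)$ is a directed colimit in $\Set$ of bijections, hence a bijection.

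\emph{Step 2: reflectivity.} This is the main obstacle, since $\ar{W}$ need not consist of epimorphisms and Proposition~\ref{p:localise-epis} does not apply. I would use the small-object-style construction. Given $X$, build a transfinite chain $X=X_0\to X_1\to\cdots$. At each successor step, first take the pushout along the coproduct of all $w_i$ indexed by maps $A_i\to X_\alpha$; this forces existence of extensions. Then coequalise all pairs $B_i\rightrightarrows X_\alpha$ that agree on $A_i$, which forces uniqueness. Take colimits at limit stages. Because the $A_i$ and $B_i$ are finitely presentable, the chain stabilises at stage $\omega$ in the relevant sense: $X_\omega=\colim_n X_n$ is orthogonal to $\ar{W}$. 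Indeed, any map $A_i\to X_\omega$ factors through some $X_n$ and is extended at stage $n+1$. Likewise, two extensions $B_i\rightrightarrows X_\omega$ factor through a common $X_n$, possibly after enlarging $n$ using finite presentability to make them agree on $A_i$, and are then identified at stage $n+1$. Next, each step $X_n\to X_{n+1}$ is built from pushouts of the $w_i$ and coequalisers of codiagonal-type maps; by the universal properties, any map from $X_n$ to an object of $\catt$ extends uniquely along it. Hence $X\to X_\omega$ is a reflection.

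\emph{Step 3: local finite presentability.} $\catt$ is cocomplete, as a reflective subcategory of a cocomplete category. Let $L$ be the reflector. Since the inclusion preserves directed colimits by Step 1, $L$ preserves finitely presentable objects: $\catt(LG,-)\cong\cat(G,-)$ restricted to $\catt$ preserves directed colimits. Every $Y\in\catt$ is a directed colimit in $\cat$ of finitely presentable objects $G_k$. Applying $L$ shows that $Y\cong LY$ is a directed colimit of the $LG_k$. Taking the set $\{LG\}$, with $G$ ranging over a set of representatives of finitely presentable objects of $\cat$, exhibits $\catt$ as lfp.
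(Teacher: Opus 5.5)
Your proposal is correct and takes the same route as the paper, which simply invokes \cite[Theorem~1.39]{ar94book}. Your self-contained sketch is essentially the standard argument given there: closure under limits and directed colimits via the hom-functors $\cat(A_i,-)$ and $\cat(B_i,-)$, the pushout-then-coequalise construction stabilising at stage $\omega$, and local finite presentability via the reflector preserving finitely presentable objects.
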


\begin{remark}
The converse of Theorem~\ref{orthog-reflective} is false: not every reflective subcategory of an lfp category $\cat$, closed under directed colimits in $\cat$, is an $\omega$-orthogonality class; see~\cite{Volger1979} for a counterexample. Characterisations of $\omega$-orthogonality classes in lfp categories have been provided in~\cite{HAR2001,AS2004}.
\end{remark}

A useful variation of the notion of an $\omega$-orthogonality class is that of an \emph{$\omega$-injectivity class} in a category $\cat$, which is a full subcategory of $\cat$ consisting of all the objects that are injective with respect to a set of arrows between finitely presentable objects of $\cat$.
The following characterisation of $\omega$-injectivity classes in lfp categories was proved in \cite[Theorem~2.2]{RAB2022}. By an \emph{$\omega$-pure subobject} we mean a subobject represented by an $\omega$-pure morphism (cf.\ Remark~\ref{rm:purity}).

\begin{theorem}\label{t:charact-inj-classes}
A full subcategory $\catt$ of an lfp category $\cat$ is an $\omega$-injectivity class if, and only if, it is closed in $\cat$ under products, directed colimits, and $\omega$-pure subobjects.
\end{theorem}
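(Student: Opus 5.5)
The forward implication is a direct check. For the converse, I would show that every object injective with respect to the relevant arrows admits an $\omega$-pure morphism into an object of $\catt$.

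\emph{Necessity.} Suppose $\catt$ consists of the objects injective with respect to a set $\ar{W}$ of arrows between finitely presentable objects, and fix $f\colon A\to B$ in $\ar{W}$.
\begin{itemize}
\item \emph{Products.} Given $g\colon A\to\prod_k D_k$, extend each component $\pi_k g$ along $f$ and pair the extensions.
\item \emph{Directed colimits.} Given $g\colon A\to\colim_i D_i$, finite presentability of $A$ lets $g$ factor through some colimit injection $D_i\to\colim D$. Extend along $f$ inside $D_i$ and compose back with the injection.
\item \emph{$\omega$-pure subobjects.} Let $m\colon X\to Y$ be $\omega$-pure with $Y\in\catt$, and let $g\colon A\to X$. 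Extend $mg$ along $f$ to some $h\colon B\to Y$. The square with sides $g,f,m,h$ commutes, so purity gives $k\colon B\to X$ with $kf=g$. Hence $X$ is injective with respect to $f$.
\end{itemize}

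\emph{Sufficiency.} Suppose $\catt$ has the three closure properties.
\begin{itemize}
\item \emph{Choice of arrows.} Let $\ar{W}$ be the set (up to isomorphism, using a small set of representatives of finitely presentable objects) of all $f\colon A\to B$ between finitely presentable objects with respect to which every object of $\catt$ is injective. Clearly $\catt$ is contained in the class of $\ar{W}$-injectives.
\item \emph{Goal.} Given $X$ injective with respect to $\ar{W}$, I will build $Y\in\catt$ and an $\omega$-pure $u\colon X\to Y$. Closure under $\omega$-pure subobjects then gives $X\in\catt$.
\item \emph{Indexing $X$.} Write $X$ as a directed colimit of $a_i\colon A_i\to X$ over a directed poset $I$, with each $A_i$ finitely presentable.
\item \emph{Witnessing objects.} For each $i$, consider the (essentially small) set $F_i$ of arrows $f\colon A_i\to B$, with $B$ finitely presentable, along which $a_i$ does \emph{not} extend. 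Each $f\in F_i$ lies outside $\ar{W}$, since $X$ is $\ar{W}$-injective. So choose $D_f\in\catt$ and $g_f\colon A_i\to D_f$ that does not extend along $f$.
\item \emph{The object $Y$.} Put $P_i=\prod_{f\in F_i}D_f$ and $Q_i=\prod_{j\geq i}P_j$. Let $A_i\to Q_i$ have $j$-th component $A_i\to A_j\xrightarrow{(g_f)_f}P_j$. The projections $Q_i\to Q_{i'}$ for $i\leq i'$ make these maps a cocone map between directed diagrams. Passing to colimits yields $u\colon X\to Y:=\colim_i Q_i$, which is a reduced-product-like object. Since $\catt$ is closed under products and directed colimits, $Y\in\catt$.
\end{itemize}

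\emph{Purity of $u$ (main obstacle).} Take a commutative square with $c\colon A\to X$, $f\colon A\to B$ and $h\colon B\to Y$, where $A,B$ are finitely presentable.
\begin{itemize}
\item \emph{Reduction.} Factor $c$ through some $a_i$, replacing $f$ by its pushout along the connecting map if needed, so that we may assume $A=A_i$ and $c=a_i$. Since $B$ is finitely presentable, $h$ factors through some $Q_k$.
\item \emph{Equalising.} Enlarging $k\geq i$, the two composites $A_i\to Q_k$ are equalised in the colimit, and can be made equal after projecting to some $Q_{k'}$.
\item \emph{Conclusion.} If $a_i$ did not extend along $f$, then $f\in F_i$. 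Its pushout along $A_i\to A_j$, for suitable $j\geq k'$, would also fail to let $a_j$ extend. Then the $D_{f'}$-component of $h$ would extend $g_{f'}$, a contradiction.
\end{itemize}
The delicate bookkeeping is exactly this step. One must choose the index at which the square is tested so that the pushed-out arrow lies in $F_j$ and the corresponding product component is still visible in $Q_{k'}$; the reason for using the tails $\prod_{j\geq i}$ in $Q_i$ is precisely to keep all such components available.
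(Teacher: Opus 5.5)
Your proof is correct. The paper gives no proof of this statement of its own; it quotes it from \cite[Theorem~2.2]{RAB2022}. Your argument is the standard way this result is established: you show that an $\ar{W}$-injective object $X$ admits an $\omega$-pure morphism into the reduced product $\colim_i\prod_{j\geq i}P_j$ of objects of $\catt$, which lies in $\catt$ by closure under products and directed colimits.

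A few points should be stated more precisely when you write it up; none of them is a gap.
\begin{itemize}
\item In the reduction step, the pushout of $f$ is taken along the factorisation $c'\colon A\to A_i$ of $c$ through $a_i$. It is not taken along a connecting map of the diagram.
\item In the final step, the $(j,f')$-component of $\pi_{kk'}\circ h_k\colon B\to Q_{k'}$ and the map $g_{f'}$ agree on $A_i$. The universal property of the pushout $A_j+_{A_i}B$ then turns them into an extension of $g_{f'}$ along $f'$. Merely taking ``the component of $h$'' does not give this extension.
\item The objects $A_i$, and the pushed-out codomains, need not be among your chosen representatives. So when you conclude $f\notin\ar{W}$ from the $\ar{W}$-injectivity of $X$, invoke the invariance of injectivity under isomorphism in the arrow category.
\end{itemize}
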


\section{Beth companions and saturated objects}\label{s:Beth-saturated}

Before stating the formal definition, we will motivate the concept of a Beth companion. If $\cat$ is any lfp category, its Beth companion $\B{\cat}$ should be a balanced full and replete subcategory of $\cat$ such that the inclusion functor $\B{\cat}\into \cat$ is a morphism of lfp categories.\footnote{The requirement that $\B{\cat}$ be a full and replete subcategory of $\cat$, rather than simply admitting a fully faithful functor to $\cat$, is not essential. However, it does simplify statements, since Beth companions are then unique rather than unique up to equivalence.} Moreover, to capture the ``companionship'' property, we require that every object of $\cat$ admits a monomorphism to an object of $\B{\cat}$. Since morphisms of lfp categories are right adjoints, this is equivalent to requiring that $\B{\cat}$ is a \emph{mono-reflective} subcategory of $\cat$, meaning that the components of the unit of the adjunction are monic (in fact, epi-monic; see e.g.\ \cite[Proposition~16.3]{ahs06book}). 

\begin{definition}\label{def:Beth-com}
Let $\cat$ be an lfp category. If it exists, a \emph{Beth companion of $\cat$} is a full and replete mono-reflective subcategory
\[\begin{tikzcd}[column sep=4em]
\B{\cat} \arrow[hookrightarrow]{r}[yshift=5pt]{\text{\tiny{$\bot$}}} & \cat \arrow[bend right=70,xshift=2pt]{l}
\end{tikzcd}\]
satisfying the following conditions:
\begin{enumerate}[label=(\alph*)]
\item $\B{\cat}$ is lfp and the inclusion functor $\B{\cat}\into \cat$ preserves directed colimits;
\item\label{i:ES-comp} $\B{\cat}$ satisfies the \ES~property (equivalently, it is balanced).
\end{enumerate}
\end{definition}
 
In other words, a Beth companion of $\cat$ is a (full and replete) balanced mono-reflective subcategory of $\cat$ in the category of lfp categories and lfp morphisms.

The main result of this section (see Theorem~\ref{t:Beth-companion-unique} below) shows that if $\cat$ is an lfp category that admits a Beth companion, then the latter coincides with $\Sat(\cat)$. The theorem also provides necessary and sufficient conditions for the existence of the Beth companion. We begin by working towards the first of these statements.

\begin{proposition}\label{p:saturated-Beth-comp-lfp}
Let $\catt$ be a balanced, full and replete mono-reflective subcategory of an lfp category $\cat$. Then $\catt$ coincides with $\Sat(\cat)$. 
\end{proposition}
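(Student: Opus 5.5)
The plan is to prove the two inclusions $\Sat(\cat)\subseteq\catt$ and $\catt\subseteq\Sat(\cat)$ directly from the definitions. The only facts needed are the unit of the reflection and the remark before Definition~\ref{def:Beth-com}, namely that in a mono-reflective subcategory the unit components are epi-monic (cf.\ \cite[Proposition~16.3]{ahs06book}). Write $L\colon\cat\to\catt$ for the reflector and $\eta_X\colon X\to LX$ for the unit, so each $\eta_X$ is both epic and monic in $\cat$. Local presentability of $\cat$ should play no role.

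For $\Sat(\cat)\subseteq\catt$, take a saturated object $A$. Then $\eta_A\colon A\to LA$ is an epi-mono in $\cat$ with domain $A$, so it is an isomorphism by saturation. Since $LA\in\catt$ and $\catt$ is replete, we get $A\in\catt$.

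For $\catt\subseteq\Sat(\cat)$, take $A\in\catt$ and an epi-mono $m\colon A\to B$ in $\cat$; we must show that $m$ is an isomorphism. The point is that $B$ need not lie in $\catt$, so we cannot apply balancedness of $\catt$ to $m$ directly. Instead, consider the composite $f=\eta_B\circ m\colon A\to LB$, which is an arrow between objects of $\catt$.
\begin{itemize}
\item The arrow $f$ is monic in $\cat$, being a composite of monos, and hence monic in the subcategory $\catt$.
\item It is epic in $\cat$, being a composite of epis, and hence epic in $\catt$: if two arrows of $\catt$ agree after precomposition with $f$, they already agree in $\cat$.
\end{itemize}
Since $\catt$ is balanced, $f$ is an isomorphism. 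Then $r=f^{-1}\circ\eta_B$ satisfies $r\circ m=\id_A$, so $m$ is a split mono. A split mono that is also epic is an isomorphism, because $m\circ r\circ m=m$ and $m$ epic give $m\circ r=\id_B$. Hence $A$ is saturated.

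The step needing care is the second inclusion: one has to pass from $m$ to $\eta_B\circ m$ to land inside $\catt$. One must also check that epi-/mono-ness transfers from $\cat$ to the full subcategory $\catt$, which it does in the direction needed here. Everything else is immediate.
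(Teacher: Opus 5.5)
Your proof is correct. The first inclusion is exactly the paper's argument, namely the first half of Lemma~\ref{l:mono-refl-sat}. For the reverse inclusion you take a genuinely different and more economical route.

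\textbf{The paper's route.} It proves $\catt\subseteq\Sat(\cat)$ under the extra hypothesis that every object of $\cat$ admits an epi-mono into a saturated object. For any $A$ it picks such an epi-mono $RL(A)\to E$, and $E\in\catt$ by the first inclusion. By balancedness, $RL(A)\cong E$ is then saturated. The extra hypothesis is discharged by Lemma~\ref{l:existence-saturated-epi-ext}. That lemma is a Zorn's Lemma argument using well-copoweredness of $\cat$ and the fact that colimits of chains of epi-monos are again epi-monic. This is the only place where local finite presentability enters.

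\textbf{Your route.} You post-compose an arbitrary epi-extension $m\colon A\to B$ with $\eta_B$ to land in $\catt$, and then extract the retraction $f^{-1}\circ\eta_B$ of $m$. This bypasses all of the above. The proposition therefore holds for an arbitrary category $\cat$, as you anticipated. It also shows that the additional hypothesis in the second half of Lemma~\ref{l:mono-refl-sat} is superfluous. Both arguments rely on the same fact: mono-reflectivity makes the unit components epic in $\cat$, not merely with respect to maps into $\catt$.

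\textbf{Comparison.} The paper's route mainly buys economy of exposition. The existence of saturated epi-extensions is needed anyway elsewhere, e.g.\ in the implication (3)$\Rightarrow$(1) of Theorem~\ref{t:Beth-companion-unique} and in Lemma~\ref{l:ort-vs-sat}. It also keeps saturation framed in terms of maximal epi-extensions. For the statement at hand, however, your argument is shorter and strictly more general.
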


To prove this fact, we start with the following observation.

\begin{lemma}\label{l:mono-refl-sat}
Let $\catt$ be a full, replete and mono-reflective subcategory of a category~$\cat$. Then $\catt$ contains $\Sat(\cat)$. If, in addition, $\catt$ is balanced and every object of $\cat$ admits an epi-mono to a saturated object, then $\catt$ is contained in $\Sat(\cat)$.
\end{lemma}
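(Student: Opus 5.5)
For the first inclusion, I would take a saturated object $A$ of $\cat$ and consider the unit component $\eta_A\colon A\to RLA$ of the reflection $L\dashv R$, where $R\colon\catt\into\cat$ is the inclusion. By mono-reflectivity $\eta_A$ is monic. As recalled before Definition~\ref{def:Beth-com} (cf.\ \cite[Proposition~16.3]{ahs06book}), it is also epic. The epic part is the standard argument: if $u\circ\eta_A=v\circ\eta_A$ with $u,v\colon RLA\to Y$, first factor $Y$ through its mono unit $\eta_Y$. Then $\eta_Y u$ and $\eta_Y v$ are maps into an object of $\catt$ that agree after $\eta_A$. By the universal property they coincide, and cancelling the mono $\eta_Y$ gives $u=v$. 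So $\eta_A$ is an epi-mono with domain the saturated object $A$, hence an isomorphism. Since $\catt$ is replete, $A\in\catt$.

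For the second inclusion, I assume $\catt$ is balanced and that every object admits an epi-mono into a saturated object. Take $A\in\catt$ and an epi-mono $m\colon A\to B$ in $\cat$; the goal is to show that $m$ is an isomorphism. By hypothesis there is an epi-mono $n\colon B\to S$ with $S$ saturated. By the first part $S\in\catt$, so $n\circ m\colon A\to S$ is an epi-mono of $\cat$ between objects of $\catt$. Because $\catt$ is full, this composite is monic in $\catt$, and it is epic in $\catt$ since it is epic in the larger category $\cat$. Balancedness of $\catt$ then makes $n\circ m$ an isomorphism in $\catt$, hence in $\cat$.

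From this, $n$ is a split epi, since $n\circ m\circ(nm)^{-1}=\id_S$. A split epi that is also monic is an isomorphism, so $n$ is an isomorphism. Consequently $m=n^{-1}\circ(nm)$ is an isomorphism as well. Every epi-mono out of $A$ is therefore invertible, i.e.\ $A$ is saturated.

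I expect the main obstacle to be the small point that epis in $\cat$ restrict to epis in $\catt$, and monos likewise. This holds trivially for a full subcategory, so the whole argument is essentially routine. The only real input is that the mono-reflection units are epic.
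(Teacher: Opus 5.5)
Your proof is correct and follows the paper's argument. The first part is identical: the unit component at a saturated object is an epi-mono, hence invertible, and repleteness finishes. In the second part the paper likewise uses that the saturated target of the assumed epi-mono lies in $\catt$ by the first part, so that balancedness of $\catt$ makes that epi-mono an isomorphism.

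The only difference is cosmetic. The paper applies the hypothesis directly to the object of $\catt$ (written as $RL(A)$) and concludes that it is isomorphic to a saturated object. This spares you the detour through an arbitrary epi-mono $m\colon A\to B$ and the split-epi argument.
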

\begin{proof}
Let $R\colon \catt\into \cat$ be the inclusion functor, $L$ its left adjoint, and $\eta$ the unit of the adjunction.
For the first part of the statement, observe that the component $\eta_{A}$ of $\eta$ at an object~$A$ of $\cat$ is an epi-mono with domain $A$, hence $\eta_{A}$ is an isomorphism whenever $A$ is saturated. Since $\catt$ is a replete subcategory of $\cat$, it follows that every saturated object of $\cat$ belongs to $\catt$.

For the second part of the statement, it suffices to show that, for each object $A$ of~$\cat$, $RL(A)$ belongs to $\Sat(\cat)$. Consider an epi-mono $f\colon RL(A)\to E$ with $E$ in $\Sat(\cat)$. It follows from the previous part of the proof that $E$ belongs to $\catt$. Thus, since $\catt$ is balanced, $f$ is an isomorphism and so $RL(A)$ belongs to $\Sat(\cat)$.
\end{proof}

To deduce Proposition~\ref{p:saturated-Beth-comp-lfp} from Lemma~\ref{l:mono-refl-sat}, we need to show that every object of an lfp category admits an epi-mono to a saturated object.
To this end, given any object $A$ of a category $\cat$, we define a preorder relation $\preccurlyeq$ on the class of all \emph{extensions} of $A$, i.e.\ all monomorphisms in $\cat$ with domain $A$, as follows. If $m\colon A\emb B$ and $n\colon A \emb C$, we set
\[
m\preccurlyeq n \ \Longleftrightarrow \ \text{there is a monomorphism $h$ such that} \ n=h\circ m.
\]
The ensuing (possibly large) preorder is denoted by $\Ext(A)$, and the symmetrisation of $\preccurlyeq$ is denoted by $\sim$ (that is, $m\sim n$ if, and only if, $m\preccurlyeq n$ and $n\preccurlyeq m$).

\begin{definition}\label{def:epi-extension}
An \emph{epi-extension} of $A$ is any epi-mono in $\cat$ with domain $A$. Write $\EExt(A)$ for the sub-preorder of $\Ext(A)$ consisting of the epi-extensions of $A$. An element $m\in \EExt(A)$ is \emph{maximal} if, for any $n\in \EExt(A)$, $m\preccurlyeq n$ implies $m\sim n$.
\end{definition}

\begin{remark}\label{rem:order-on-epi-ext}
It is not difficult to see that, for any two epi-extensions $m,n\in \EExt(A)$,
\begin{enumerate}[label=(\roman*)]
\item\label{i:preceq-unique-epi-mono} $m\preccurlyeq n$ if, and only if, there is a (necessarily unique) epi-mono $h$ such that $n=h\circ m$;
\item\label{i:sym-equi} $m\sim n$ if, and only if, there is a (necessarily unique) isomorphism $h$ such that $n=h\circ m$;
\item\label{i:unique-epi-mono-is-iso} if $m\sim n$, then the unique epi-mono $h$ satisfying $n=h\circ m$ is an isomorphism.
\end{enumerate}
\end{remark}

\begin{lemma}\label{l:maximal-saturated}
Let ${m\colon A\emb E}$ be an epi-extension of $A$. The following statements are equivalent:
\begin{enumerate}[label=(\arabic*)]
\item\label{i:codomain-saturated} $E$ is saturated;
\item\label{i:maximal-in-EExt} $m$ is maximal in $\EExt(A)$.
\end{enumerate}
\end{lemma}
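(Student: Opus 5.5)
Both implications are elementary. Each comes from composing epi-monos and using Remark~\ref{rem:order-on-epi-ext}, so I would argue directly from the definitions.

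\emph{\ref{i:codomain-saturated}$\Rightarrow$\ref{i:maximal-in-EExt}.} Suppose $E$ is saturated and let $n\colon A\emb C$ be an epi-extension with $m\preccurlyeq n$. By Remark~\ref{rem:order-on-epi-ext}\ref{i:preceq-unique-epi-mono} there is an epi-mono $h\colon E\to C$ with $n=h\circ m$. Since $E$ is saturated, every epi-mono with domain $E$ is an isomorphism, so $h$ is an isomorphism. Then $m=h^{-1}\circ n$ with $h^{-1}$ monic, so $n\preccurlyeq m$ and hence $m\sim n$. Thus $m$ is maximal.

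\emph{\ref{i:maximal-in-EExt}$\Rightarrow$\ref{i:codomain-saturated}.} Suppose $m$ is maximal and let $h\colon E\to C$ be an epi-mono. Composites of epis are epi and composites of monos are mono, so $n:=h\circ m$ is an epi-mono with domain $A$, i.e.\ $n\in\EExt(A)$. Clearly $m\preccurlyeq n$, witnessed by the mono $h$. Maximality gives $m\sim n$. By Remark~\ref{rem:order-on-epi-ext}\ref{i:unique-epi-mono-is-iso}, the unique epi-mono $h'$ with $n=h'\circ m$ is an isomorphism. Since $m$ is epic, $h=h'$, so $h$ is an isomorphism. Hence every epi-mono out of $E$ is invertible, i.e.\ $E$ is saturated.

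The only point needing care is the uniqueness claim in Remark~\ref{rem:order-on-epi-ext}. It follows because $m$ is epic: any two $h,h'$ with $h\circ m=h'\circ m$ coincide. Together with the observation that the witness $h$ is automatically epic (from $h\circ m$ epic), this is the key ingredient, and it is routine. I expect no genuine obstacle.
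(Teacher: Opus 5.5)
Your proof is correct and follows essentially the same route as the paper: both directions compose epi-monos and invoke Remark~\ref{rem:order-on-epi-ext}. The only cosmetic difference is that you verify $n\preccurlyeq m$ directly via $m=h^{-1}\circ n$ and spell out $h=h'$ from $m$ being epic, where the paper simply cites items~\ref{i:sym-equi} and~\ref{i:unique-epi-mono-is-iso} of that remark.
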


\begin{proof}
\ref{i:codomain-saturated} $\IMP$ \ref{i:maximal-in-EExt}. If $n\colon A\emb B$ is an epi-extension of $A$ satisfying $m\preccurlyeq n$, by Remark~\ref{rem:order-on-epi-ext}\ref{i:preceq-unique-epi-mono} there is an epi-mono $h\colon E\emb B$ such that $n=h\circ m$. As $E$ is saturated, $h$ is an isomorphism. Thus, $m\sim n$ by Remark~\ref{rem:order-on-epi-ext}\ref{i:sym-equi}.

\ref{i:maximal-in-EExt} $\IMP$ \ref{i:codomain-saturated}. Suppose $h\colon E\to B$ is an epi-mono. Since $m\preccurlyeq h\circ m$ in $\EExt(A)$, and $m$ is maximal, we get $m\sim h\circ m$. Thus, $h$ is an isomorphism by Remark~\ref{rem:order-on-epi-ext}\ref{i:unique-epi-mono-is-iso}.
\end{proof}

Together with Lemma~\ref{l:mono-refl-sat}, the next result, which is a consequence of Zorn's Lemma, completes the proof of Proposition~\ref{p:saturated-Beth-comp-lfp}.

\begin{lemma}\label{l:existence-saturated-epi-ext}
In an lfp category, every object admits a saturated epi-extension.
\end{lemma}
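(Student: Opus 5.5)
The plan is to apply Zorn's Lemma to the preorder $\EExt(A)$, or rather to its poset reflection $\EExt(A)/{\sim}$, and then to conclude with Lemma~\ref{l:maximal-saturated}. A maximal element $m\colon A\emb E$ of $\EExt(A)$ has saturated codomain by that lemma, so $m$ is the required saturated epi-extension. Note that $\EExt(A)$ is non-empty, since it contains $\id_A$.

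The first step is a size issue: Zorn's Lemma needs a set. Lfp categories are co-wellpowered (cf.\ \cite{ar94book}): every object has only a set of quotients, i.e.\ of epimorphisms with that domain up to isomorphism under the domain. By Remark~\ref{rem:order-on-epi-ext}\ref{i:sym-equi}, the $\sim$-classes of epi-extensions of $A$ are exactly such isomorphism classes of epis. Hence $\EExt(A)/{\sim}$ is a set, partially ordered by the relation induced by $\preccurlyeq$.

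The main step is to show that every chain has an upper bound. Given a chain $\{[m_i]\}_{i\in I}$ with $m_i\colon A\emb E_i$, I pick representatives. For $i\leq j$, Remark~\ref{rem:order-on-epi-ext}\ref{i:preceq-unique-epi-mono} gives a unique epi-mono $h_{ij}\colon E_i\to E_j$ with $h_{ij}\circ m_i=m_j$. Uniqueness forces $h_{jk}h_{ij}=h_{ik}$ and $h_{ii}=\id$, so these maps form a directed diagram. Let $E$ be its colimit, with injections $c_i\colon E_i\to E$, and let $m=c_i\circ m_i$, which is independent of $i$. I then need three facts.
\begin{enumerate}[label=(\roman*)]
\item Each $c_i$ is monic. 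In an lfp category, directed colimits of monomorphisms have monic colimit injections, and the colimit of a directed diagram of monos is mono, because directed colimits commute with finite limits \cite[Proposition~1.62]{ar94book}. Consequently $m=c_i\circ m_i$ is monic.
\item $m$ is epic. If $u\circ m=v\circ m$, then for each $i$ we have $u c_i m_i = v c_i m_i$. Since $m_i$ is epic, this gives $u c_i=v c_i$ for all $i$. As the colimit injections are jointly epic, $u=v$.
\item $m_i\preccurlyeq m$ via the monomorphism $c_i$. So $[m]$ is an upper bound of the chain in $\EExt(A)/{\sim}$.
\end{enumerate}

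The main obstacle I expect is (i): checking that mono-ness survives the directed colimit. This genuinely uses the lfp structure, namely the exactness of directed colimits with respect to finite limits (kernel pairs), and I will invoke the cited result from \cite{ar94book}. The co-wellpoweredness in the size step also relies on a standard fact about lfp categories. With both facts in hand, Zorn's Lemma yields a maximal element, and Lemma~\ref{l:maximal-saturated} finishes the proof.
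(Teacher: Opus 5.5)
Your proof is correct and follows the paper's route. The paper also applies Zorn's Lemma to the poset reflection of $\EExt(A)$, with smallness from well-copoweredness. It bounds each chain by its directed colimit, which is monic by \cite[Proposition~1.62]{ar94book} and epic, and concludes with Lemma~\ref{l:maximal-saturated}; you additionally spell out the coherence of the connecting maps and the joint-epicity argument, which the paper leaves implicit.
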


\begin{proof}
Let $A$ be an object of an lfp category, and let $P$ be the poset reflection of the preorder $\EExt(A)$ of epi-extensions of $A$. Further, let $\Epi(A)$ be the poset of subobjects of $A$ regarded as an object of~$\cat^{\op}$. By item~\ref{i:sym-equi} in Remark~\ref{rem:order-on-epi-ext}, there is an injective map $P\to \Epi(A)$. Since every lfp category is well-copowered \cite[Theorem~1.58]{ar94book}, $\Epi(A)$ is small, and thus so is $P$. Note that $P$ is non-empty because it contains the (equivalence class of) the identity of $A$. Moreover, every chain in $P$ admits an upper bound, which can be obtained by computing the colimit $A_{\omega}$ of the chain. The colimit map $A\to A_{\omega}$ is easily seen to be epic, and it is monic by \cite[Proposition~1.62]{ar94book}. By Zorn's Lemma, there exists a maximal element in $P$. In view of Lemma~\ref{l:maximal-saturated}, such a maximal element is (represented by) an epi-extension $A\to E$ with $E$ saturated. 
\end{proof}
 
The last missing ingredient to derive the main result of this section is the following elementary fact about mono-reflective subcategories.
\begin{lemma}\label{l:basic-props-mono-reflections}
Let $\catt$ be a full mono-reflective subcategory of a category $\cat$. The inclusion functor $R\colon\catt\into\cat$ preserves epis and reflects both extremal and regular monos,
and the reflector $L\colon \cat\to\catt$ is faithful.
\end{lemma}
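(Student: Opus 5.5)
We prove the three claims separately, using throughout that the unit $\eta_X\colon X\to RL(X)$ is monic (indeed epi-monic) and the universal property of $\eta$; for $D$ in $\catt$ we may assume $\eta_{RD}$ is an isomorphism, since $\catt$ is reflective and we can choose $\eta_{RD}=\id$ up to iso.

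\emph{$L$ is faithful.} Let $f,g\colon X\to Y$ in $\cat$ with $L f=L g$. Naturality gives $\eta_Y\circ f = RLf\circ\eta_X = RLg\circ \eta_X=\eta_Y\circ g$. Since $\eta_Y$ is monic, $f=g$.

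\emph{$R$ preserves epis.} Let $e\colon D\to D'$ be epic in $\catt$, and let $u,v\colon RD'\to X$ in $\cat$ satisfy $u\circ Re=v\circ Re$. Then $\eta_X\circ u\circ Re=\eta_X\circ v\circ Re$. Both $\eta_X\circ u$ and $\eta_X\circ v$ are maps from $RD'$ into $RL(X)$, which lies in the image of $R$. By fullness they are of the form $R(u'), R(v')$ with $u'\circ e=v'\circ e$ in $\catt$. Since $e$ is epic, $u'=v'$, so $\eta_X\circ u=\eta_X\circ v$, and as $\eta_X$ is monic we get $u=v$.

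\emph{$R$ reflects extremal monos.} Suppose $m$ is in $\catt$ and $Rm$ is extremal in $\cat$. First, $m$ is monic in $\catt$, since $R$ is faithful and $Rm$ is monic. Now take a decomposition $m=f\circ e$ in $\catt$ with $e$ epic. Then $Rm=Rf\circ Re$, and $Re$ is epic by the previous step, so $Re$ is an isomorphism. Since $R$ is fully faithful, it reflects isomorphisms, hence $e$ is an isomorphism.

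\emph{$R$ reflects regular monos.} Suppose $Rm$ is the equaliser in $\cat$ of $u,v\colon RD'\to X$. Replace $u,v$ by $\eta_X\circ u,\eta_X\circ v$, which by fullness are $R(u'),R(v')$. Because $\eta_X$ is monic, $Rm$ is still the equaliser of $R(u'),R(v')$. Since $R$ is fully faithful, an equaliser diagram in $\cat$ lying entirely in the image of $R$ is an equaliser in $\catt$. Hence $m$ is the equaliser of $u',v'$ in $\catt$.

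The only point requiring care is the epi-preservation step. It is the one place where monicity of the unit is combined with fullness, and the extremal-mono reflection depends on it.
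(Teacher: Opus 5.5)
Your proof is correct and follows essentially the same route as the paper: preservation of epis and reflection of regular monos go exactly as there, by post-composing with the monic unit $\eta_X$ and using that $R$ is full and faithful. Your explicit arguments for reflecting extremal monos and for faithfulness of $L$ (via naturality, $\eta_Y\circ f = RL(f)\circ\eta_X$) unpack the general facts the paper only cites, and your opening remark that $\eta_{RD}$ is an isomorphism is never used.
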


\begin{proof}
Suppose that $e\colon X\to Y$ is an epimorphism in $\catt$, and consider arrows $f,g\colon R(Y)\rightrightarrows A$ in $\cat$ such that $f\circ R(e) = g\circ R(e)$. Composing with the component of the unit at $A$, i.e.\ $\eta_{A}\colon A\to RL(A)$, we get $\eta_{A}\circ f\circ R(e) = \eta_{A}\circ g\circ R(e)$. Since $\catt$ is a full subcategory of $\cat$, there exist arrows $u,v\colon Y\rightrightarrows L(A)$ in $\catt$ such that $R(u)=\eta_{A}\circ f$ and $R(v)=\eta_{A}\circ g$. It follows that 
\[
R(u\circ e) = R(u)\circ R(e)= R(v)\circ R(e) = R(v\circ e),
\]
and so $u\circ e = v\circ e$ in $\catt$ because $R$ is faithful. As $e$ is an epimorphism, we get $u=v$ and therefore $\eta_{A}\circ f = \eta_{A}\circ g$. Using the fact that $\eta_{A}$ is monic, we conclude that $f=g$, showing that $R(e)$ is epic.

It is straightforward to see that any fully faithful functor that preserves epis must reflect extremal monos. Hence, $R$ reflects extremal monos. 
To see that $R$ reflects regular monos, suppose that $f\colon X\to Y$ is an arrow in $\catt$ such that $R(f)$ is the equaliser of a pair of parallel arrows $u,v\colon R(Y)\rightrightarrows A$. 
Composing with the component $\eta_{A}$ of the unit at $A$, we obtain arrows $u'\coloneqq \eta_{A}\circ u$ and $v'\coloneqq \eta_{A}\circ v$. Since $\eta_{A}$ is monic, $R(f)$ is also the equaliser of $u'$ and $v'$. As $R$ is full, there are arrows $g,h\colon Y\rightrightarrows L(A)$ such that $R(g)=u'$ and $R(h)=v'$. Using the fact that fully faithful functors reflect limits, we conclude that $f$ is the equaliser of $g$ and $h$.

Finally, recall that, given an adjoint pair of functors, the left adjoint is faithful if, and only if, the unit of the adjunction is pointwise monic (see e.g.\ \cite[Theorem~19.14(1)]{ahs06book} for the dual statement). Thus, the reflector $L$ is faithful.
\end{proof}
 
\begin{theorem}\label{t:Beth-companion-unique} 
Let $\cat$ be an lfp category. If it exists, the Beth companion of $\cat$ is unique. Moreover, the following statements are equivalent:
\begin{enumerate}[label=(\arabic*)]
\item\label{i:has-Beth-comp} $\cat$ admits a Beth companion;
\item\label{i:Beth-comp-is-Sigma} $\Sat(\cat)$ is a Beth companion of $\cat$;
\item\label{i:Sigma-axiomatisable} $\Sat(\cat)$ is closed under limits and directed colimits in $\cat$.
\end{enumerate}
\end{theorem}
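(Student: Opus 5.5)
Uniqueness follows from Proposition~\ref{p:saturated-Beth-comp-lfp}. A Beth companion is by definition a balanced, full and replete mono-reflective subcategory of $\cat$, so it must coincide with $\Sat(\cat)$. This also gives \ref{i:has-Beth-comp}~$\IMP$~\ref{i:Beth-comp-is-Sigma} at once. The plan is then to prove \ref{i:Beth-comp-is-Sigma}~$\IMP$~\ref{i:Sigma-axiomatisable} and \ref{i:Sigma-axiomatisable}~$\IMP$~\ref{i:has-Beth-comp}; the latter will in fact show that $\Sat(\cat)$ itself is the companion.

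For \ref{i:Beth-comp-is-Sigma}~$\IMP$~\ref{i:Sigma-axiomatisable}: if $\Sat(\cat)$ is a Beth companion, it is reflective, so its inclusion is a right adjoint and preserves limits. Since $\Sat(\cat)$ is full and replete, it is therefore closed under limits in $\cat$. Condition~(a) of Definition~\ref{def:Beth-com} says the inclusion preserves directed colimits, which likewise gives closure under directed colimits.

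For \ref{i:Sigma-axiomatisable}~$\IMP$~\ref{i:has-Beth-comp}, set $\catt=\Sat(\cat)$. This is a full replete subcategory closed under limits and directed colimits in the lfp category $\cat$. By the reflection theorem for locally presentable categories \cite[Theorem~2.48]{ar94book}, such a subcategory is reflective. It is also accessible, hence locally presentable; being closed under directed colimits, it is in fact lfp by \cite[Corollary~2.48 / Theorem~2.48 and Remark]{ar94book}. This reduction to the standard Adámek--Rosický result is the step I expect to be the main obstacle: one has to check that closure under $\omega$-directed colimits yields local finite presentability, not just local presentability. A cleaner alternative is available if the citation is delicate. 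Every object $X$ of $\catt$ is a directed colimit in $\cat$ of finitely presentable objects $A_i$. Reflecting, $X\cong\colim L(A_i)$, and each $L(A_i)$ is finitely presentable in $\catt$, because $L$ is left adjoint to an inclusion that preserves directed colimits. Hence $\catt$ is lfp, and the inclusion is an lfp morphism.

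It remains to show that the reflection is a mono-reflection and that $\catt$ is balanced. For monicity of the unit, Lemma~\ref{l:existence-saturated-epi-ext} gives each $A$ a monomorphism $m\colon A\emb E$ with $E\in\catt$. The unit $\eta_A$ satisfies $m=\bar m\circ\eta_A$, so $\eta_A$ is monic. For balancedness, let $f\colon X\to Y$ be epi and mono in $\catt$. The inclusion $R$ preserves monos, being a right adjoint, and preserves epis by Lemma~\ref{l:basic-props-mono-reflections}, since the reflection is now known to be mono. So $R(f)$ is an epi-mono in $\cat$ whose domain $X$ is saturated, hence it is an isomorphism. Thus $\catt$ is balanced and is a Beth companion.
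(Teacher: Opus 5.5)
Your proposal is correct and follows essentially the same route as the paper: uniqueness and (1)$\Rightarrow$(2) come from Proposition~\ref{p:saturated-Beth-comp-lfp}, (2)$\Rightarrow$(3) holds because the inclusion is a right adjoint that preserves directed colimits, and for (3)$\Rightarrow$(1) you get mono-reflectivity from Lemma~\ref{l:existence-saturated-epi-ext} and balancedness from Lemma~\ref{l:basic-props-mono-reflections}. The only difference is the order in (3)$\Rightarrow$(1): you obtain reflectivity from the Ad\'amek--Rosick\'y reflection theorem and then check local finite presentability by hand (your ``cleaner alternative'' is valid), whereas the paper cites Makkai--Pitts for local finite presentability and then gets the left adjoint from the fact that lfp morphisms are right adjoints.
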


\begin{proof}
The uniqueness of the Beth companion and the implication \ref{i:has-Beth-comp} $\IMP$ \ref{i:Beth-comp-is-Sigma} are immediate consequences of Proposition~\ref{p:saturated-Beth-comp-lfp}.

\ref{i:Beth-comp-is-Sigma} $\IMP$ \ref{i:Sigma-axiomatisable}. Since $\Sat(\cat)$ is a reflective subcategory of an lfp category, it is complete and cocomplete. 
The inclusion functor $\Sat(\cat)\into\cat$ preserves limits and directed colimits, hence $\Sat(\cat)$ is closed under limits and directed colimits in $\cat$.

\ref{i:Sigma-axiomatisable} $\IMP$ \ref{i:has-Beth-comp}. By \cite[Corollary~2.4]{MP1987} (see also \cite[Corollary~2.48]{ar94book}), a full subcategory of an lfp category $\cat$ is itself lfp provided that it is closed in $\cat$ under limits and directed colimits. Hence, $\Sat(\cat)$ is lfp and the inclusion functor $\Sat(\cat)\into\cat$ is right adjoint by \cite[Theorem~1.66]{ar94book}. As every object of $\cat$ admits a saturated epi-extension by Lemma~\ref{l:existence-saturated-epi-ext}, $\Sat(\cat)$ is a (full and replete) mono-reflective subcategory of $\cat$. To conclude that $\Sat(\cat)$ is a Beth companion of $\cat$, it remains to show that $\Sat(\cat)$ has the \ES~property. In turn, this follows from the fact that the inclusion $\Sat(\cat)\into \cat$ reflects extremal monomorphisms by Lemma~\ref{l:basic-props-mono-reflections}.
\end{proof}

\begin{remark}\label{rmk:axiomatisability-Beth-comp}
Theorem~\ref{t:Beth-companion-unique} allows us to reduce the question of whether the Beth companion exists to an axiomatisability property of the class of saturated objects. This is akin to the concept of \emph{model companion} in model theory~\cite{Rob77}. Recall that a first-order theory has a model companion if its class of \emph{existentially closed} models is elementary (i.e., can be axiomatised in first-order logic). The conditions in item~\ref{i:Sigma-axiomatisable} of Theorem~\ref{t:Beth-companion-unique} can be regarded as axiomatisability properties of the class of saturated objects; in fact, by \cite[Corollary~2.4]{MP1987}, they imply that $\Sat(\cat)$ is the class of models of a finitary essentially algebraic theory. Because lfp categories do not come with a distinguished forgetful functor to the category of sets, we use closure under limits and directed colimits in place of  syntactic axiomatisability.
\end{remark}

\section{Strong Beth companions and absolutely closed objects}\label{s:strong-Beth-sbs-closed}

In this section, we study the notion of \emph{strong Beth companion}, which is obtained by replacing the $\ES$ property in item~\ref{i:ES-comp} of Definition~\ref{def:Beth-com} with the \SES~property. 

\begin{remark}\label{rmk:comparison-Beth-companions}
Let $\K$ be a quasivariety with the amalgamation property that admits a Beth companion $\M$ in the sense of \cite{CKM2026bis}; in particular, this means that~$\M$ must also be a quasivariety. Then $\M$ is a strong Beth companion in our~sense. Indeed, Theorem~2.2 and Corollary~3.2 in \emph{op.~cit.} imply that $\M$ is (isomorphic to) a mono-reflective subcategory of $\K$ satisfying \SES. Considering the underlying-set functors $U_{\M}$ and $U_{\K}$, we obtain a commutative diagram as follows.
\[\begin{tikzcd}[column sep=1em, row sep=1.5em]
\M \arrow[hookrightarrow]{rr} \arrow{dr}[swap]{U_{\M}} & & \K \arrow{dl}{U_{\K}} \\
& \Set &
\end{tikzcd}\]
Since $\K$ and $\M$ are quasivarieties, $U_{\M}$ and $U_{\K}$ create directed colimits. Therefore, the inclusion $\M\into \K$ preserves directed colimits, and so $\M$ is a strong Beth companion of $\K$ in our sense.
\end{remark}

Observe that any lfp category $\cat$ admits at most one strong Beth companion. This follows from the fact that $\cat$ admits at most one Beth companion, and every strong Beth companion is a Beth companion. 
Moreover, $\Abs(\cat)$ is a Beth companion of~$\cat$ if, and only if, it is a strong Beth companion. Just observe that, if $\Abs(\cat)$ is a mono-reflective subcategory of~$\cat$, then the inclusion functor $\Abs(\cat)\into \cat$ reflects regular monos by Lemma~\ref{l:basic-props-mono-reflections}, and so $\Abs(\cat)$ satisfies \SES.
In fact, we can say more:

\begin{theorem}\label{t:strong-Beth-companion-unique}
Let $\cat$ be an lfp category. If it exists, the strong Beth companion of ~$\cat$ is unique. Moreover, the following statements are equivalent:
\begin{enumerate}[label=(\arabic*)]
\item\label{i:has-strong-Beth-comp} $\cat$ admits a strong Beth companion;
\item\label{i:beth-comp-exists-and-sat-equal-abs} $\cat$ admits a Beth companion and $\Sat(\cat)=\Abs(\cat)$;
\item\label{i:abs-is-strong-beth-comp} $\Abs(\cat)$ is a (strong) Beth companion of $\cat$. 
\end{enumerate}
\end{theorem}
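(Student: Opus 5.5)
The argument is a cycle of implications \ref{i:has-strong-Beth-comp} $\IMP$ \ref{i:beth-comp-exists-and-sat-equal-abs} $\IMP$ \ref{i:abs-is-strong-beth-comp} $\IMP$ \ref{i:has-strong-Beth-comp}. Uniqueness was already observed before the statement: every strong Beth companion is a Beth companion, and the latter is unique by Theorem~\ref{t:Beth-companion-unique}.

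For \ref{i:has-strong-Beth-comp} $\IMP$ \ref{i:beth-comp-exists-and-sat-equal-abs}, let $\catt$ be a strong Beth companion. Since $\SES\IMP\ES$, $\catt$ is a Beth companion. Theorem~\ref{t:Beth-companion-unique} then gives $\catt=\Sat(\cat)$. As $\Abs(\cat)\subseteq\Sat(\cat)$ always holds, it remains to show that every object $A$ of $\catt$ is absolutely closed. Take a monomorphism $m\colon A\emb B$ in $\cat$ and factor it through the unit as $\eta_B\circ m$. Because $\eta_B$ is monic, $\eta_B\circ m=R(m')$ for a morphism $m'\colon A\to L(B)$ in $\catt$, and $m'$ is monic in $\catt$ since $R$ is faithful. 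By \SES, $m'$ is regular in $\catt$, say the equaliser of $u,v$. Since $R$ is a right adjoint, it preserves equalisers, so $\eta_B\circ m=R(m')$ is a regular mono in $\cat$.

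The main obstacle is the last step: deducing that $m$ itself is regular from the regularity of $\eta_B\circ m$. In general only ``$g\circ f$ regular implies $f$ regular'' holds when $f$ is the first factor, and here $m$ is indeed the first factor. Concretely, if $\eta_B\circ m$ equalises $R(u),R(v)$, then $m$ equalises $R(u)\circ\eta_B$ and $R(v)\circ\eta_B$. To check this, let $x$ satisfy $R(u)\eta_B x=R(v)\eta_B x$. Then $\eta_B x$ factors uniquely through $\eta_B m$, say $\eta_B x=\eta_B m y$, and since $\eta_B$ is monic, $x=my$. Uniqueness of $y$ follows from $m$ being monic. Hence $A\in\Abs(\cat)$, so $\Sat(\cat)=\Abs(\cat)$.

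For \ref{i:beth-comp-exists-and-sat-equal-abs} $\IMP$ \ref{i:abs-is-strong-beth-comp}, Theorem~\ref{t:Beth-companion-unique} says that $\Sat(\cat)=\Abs(\cat)$ is a Beth companion. By the remark preceding the statement, $\Abs(\cat)$ being a mono-reflective subcategory means the inclusion reflects regular monos (Lemma~\ref{l:basic-props-mono-reflections}). So for a mono $f$ in $\Abs(\cat)$ we proceed as follows. First, $R(f)$ is monic in $\cat$, because $R$ is a right adjoint. Its domain is absolutely closed, so $R(f)$ is regular in $\cat$. Hence $f$ is regular in $\Abs(\cat)$, and \SES\ holds, making $\Abs(\cat)$ a strong Beth companion. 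Finally, \ref{i:abs-is-strong-beth-comp} $\IMP$ \ref{i:has-strong-Beth-comp} is trivial.
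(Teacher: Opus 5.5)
Your proposal is correct and follows the paper's proof essentially step for step: uniqueness comes from uniqueness of Beth companions. For (1)$\Rightarrow$(2) you compose a mono out of a saturated object with the unit $\eta_B$, invoke \SES{} in $\Sat(\cat)$ and preservation of equalisers by the inclusion, and then cancel the mono $\eta_B$; for (2)$\Rightarrow$(3) you use that the inclusion reflects regular monos (Lemma~\ref{l:basic-props-mono-reflections}), and your explicit check that $m$ equalises $R(u)\circ\eta_B$ and $R(v)\circ\eta_B$ just spells out the one-line cancellation step of the paper, correctly relying on $\eta_B$ being monic.
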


\begin{proof}
We already observed that, if they exist, strong Beth companions are unique.

\ref{i:has-strong-Beth-comp} $\IMP$ \ref{i:beth-comp-exists-and-sat-equal-abs}. It suffices to show that $\Sat(\cat)\subseteq\Abs(\cat)$, since the converse is always true. Note that, if $\cat$ admits a strong Beth companion, then the latter coincides with $\Sat(\cat)$ by Theorem~\ref{t:Beth-companion-unique}. Thus, $\Sat(\cat)$ is a strong Beth companion of~$\cat$.

Let $X$ be a saturated object of $\cat$, and let $m\colon X\emb Y$ be a mono in $\cat$. Since $\Sat(\cat)$ is a  mono-reflective subcategory of $\cat$, composing $m$ with the component $\eta_{Y}\colon Y \emb Y^{*}$ of the unit at $Y$, we obtain a monomorphism $g\coloneqq \eta_{Y}\circ m \colon X\emb Y^{*}$ in $\cat$ between saturated objects. As $\Sat(\cat)$ satisfies \SES, $g$ is a regular mono in $\Sat(\cat)$. The inclusion functor $\Sat(\cat)\into\cat$ preserves limits because it is right adjoint, so $g$ is a regular monomorphism in $\cat$. Since $\eta_{Y}$ is monic, it follows that $m$ is a regular mono. That is, $X$ is absolutely closed in $\cat$.

\ref{i:beth-comp-exists-and-sat-equal-abs} $\IMP$ \ref{i:abs-is-strong-beth-comp}. By Theorem~\ref{t:Beth-companion-unique}, if $\cat$ admits a Beth companion, then $\Sat(\cat)$ is a Beth companion of $\cat$. If $\Sat(\cat)=\Abs(\cat)$, then the latter is also a Beth companion of $\cat$. As observed above, $\Abs(\cat)$ is a Beth companion of $\cat$ precisely when it is a strong Beth companion. 

\ref{i:abs-is-strong-beth-comp} $\IMP$ \ref{i:has-strong-Beth-comp}. Clear.
\end{proof}

In particular, Theorem~\ref{t:strong-Beth-companion-unique} entails that an lfp category satisfying \ES~admits a strong Beth companion if, and only if, it satisfies \SES; equivalently, if and only if it is its own strong Beth companion. Therefore, if an lfp category satisfies \ES~but not \SES, it does not admit a strong Beth companion.

\section{Beth companions as orthogonality classes}\label{s:ort-classes-as-Beth-companions}

When $\cat$ is an lfp category satisfying a weak form of the amalgamation property, the problem of determining (the existence of) its Beth companion is tightly related to the problem of localising at the class of epi-monos, i.e.\ of inverting all epi-monos in $\cat$. This is made precise in Lemma~\ref{l:equivalent-problems} below. 

\begin{definition}\label{d:EAP}
A category has
\begin{enumerate}
\item[(\EAP)]\label{EAP} the \emph{epimorphism amalgamation property} if any span of arrows $\alpha,\beta$, with $\alpha$ and $\beta$ epi-monic, can be completed to a commutative square as displayed below, where $\gamma$ and $\delta$ are monic.
\[\begin{tikzcd}[cells={nodes={inner sep=2pt, outer sep=1pt}}]
\phantom{\cdot} \arrow{r}{\alpha} \arrow{d}[swap]{\beta} & \phantom{\cdot} \arrow[dashed]{d}{\delta} \\
\phantom{\cdot} \arrow[dashed]{r}{\gamma} & \phantom{\cdot}
\end{tikzcd}\]
\end{enumerate}
\end{definition}
Note that \AP~$\IMP$ \EAP. If a category has pushouts of epi-monos along epi-monos, then it has \EAP~if, and only if, epi-monos are stable under pushouts along epi-monos.

\begin{remark}\label{rmk:EAP-unique-saturated-epi-ext}
Recall from Definition~\ref{def:epi-extension} and Lemma~\ref{l:maximal-saturated} that $\EExt(A)$ denotes the preorder of epi-extensions of an object $A$, whose maximal elements are the saturated epi-extensions of $A$. Using the fact that every object of an lfp category admits a saturated epi-extension (Lemma~\ref{l:existence-saturated-epi-ext}), it is not difficult to see that an lfp category $\cat$ satisfies \EAP~if, and only if, for every object $A$ of $\cat$, the preorder $\EExt(A)$ has a top element. In particular, if $\cat$ satisfies \EAP, then each of its objects admits, up to isomorphism, exactly one saturated epi-extension.
\end{remark}

We start with a simple observation, which is an adaptation of~\cite[Lemma~1]{Ringel1971}.

\begin{lemma}\label{l:Ringel-restated}
Let $\cat$ be any category satisfying \EAP. For every full mono-reflective subcategory $\catt$ of $\cat$, the reflector $\cat \to \catt$ preserves epi-monos.
\end{lemma}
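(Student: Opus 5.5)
Let $R\colon\catt\into\cat$ be the inclusion, $L$ the reflector and $\eta$ the unit, whose components are epi-monos (mono-reflective). Fix an epi-mono $f\colon A\to B$ in $\cat$ and write $Lf\colon LA\to LB$. The plan is to show separately that $Lf$ is epic and that $Lf$ is monic in $\catt$.

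\emph{Epic.} This part is formal and needs no amalgamation. $L$ is a left adjoint, so it preserves epimorphisms. Hence $Lf$ is epic in $\catt$. If we also want $RLf$ to be epic in $\cat$, we can argue directly: $RLf\circ\eta_A=\eta_B\circ f$ is a composite of epis in $\cat$, so $RLf$ is epic. Alternatively, Lemma~\ref{l:basic-props-mono-reflections} shows that $R$ preserves epis.

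\emph{Monic.} Here \EAP\ is used. Consider the span of epi-monos $\eta_A\colon A\to RLA$ and $f\colon A\to B$. By \EAP\ there is an object $C$ and monos $\gamma\colon RLA\to C$, $\delta\colon B\to C$ with $\gamma\circ\eta_A=\delta\circ f$.
\begin{itemize}
\item Map $C$ into $\catt$ via $\eta_C\colon C\to RLC$, which is monic. Then $\eta_C\circ\gamma\colon RLA\to RLC$ is monic in $\cat$, and both its domain and codomain lie in $\catt$.
\item By the universal property of $\eta_B$, there is a unique $k\colon LB\to LC$ with $Rk\circ\eta_B=\eta_C\circ\delta$.
\item Compute
\[
Rk\circ RLf\circ\eta_A=Rk\circ\eta_B\circ f=\eta_C\circ\delta\circ f=\eta_C\circ\gamma\circ\eta_A.
\]
Since morphisms out of $RLA$ into objects of $\catt$ are determined by precomposition with $\eta_A$, we get $R(k\circ Lf)=\eta_C\circ\gamma$.
\end{itemize}
The right-hand side is monic, so $RLf$ is monic in $\cat$. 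As $R$ is faithful, $Lf$ is monic in $\catt$.

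Combining the two parts, $Lf$ is an epi-mono in $\catt$, and $RLf$ is an epi-mono in $\cat$ as well. The main step, and the only place where the hypothesis enters, is the amalgamation of $\eta_A$ with $f$ to obtain a monic composite through $RLf$. Everything else is adjunction bookkeeping, using $\eta_A$ as an epimorphism to cancel.
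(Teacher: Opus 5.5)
Your proof is correct and rests on the same idea as the paper's: amalgamate the unit component $\eta_A$ with an epi-mono using \EAP, then cancel the epimorphism $\eta_A$. The only difference is the second leg of the span. The paper amalgamates $\eta_A$ with the epi-mono $\eta_B\circ f = RL(f)\circ\eta_A$ (packaged as a 2-out-of-3 property for epi-monos), which directly gives monos $u,v$ with $u = v\circ RL(f)$ and so avoids your extra step of reflecting the amalgam $C$ and constructing $k$.
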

\begin{proof}
We start by proving that \EAP~implies the following 2-out-of-3 property for epi-monos: given a composite arrow $g\circ f$ in $\cat$, if both $g\circ f$ and $f$ are epi-monos, then so is $g$. Clearly, $g$ is always epic. To see that it is monic, note that \EAP~entails the existence of monomorphisms $u,v$ making the following diagram commute.
\[\begin{tikzcd}[cells={nodes={inner sep=2pt, outer sep=1pt}}]
\phantom{\cdot} \arrow{r}{f} \arrow{d}[swap]{g\circ f} & \phantom{\cdot} \arrow[dashed]{d}{u} \\
\phantom{\cdot} \arrow[dashed]{r}{v} & \phantom{\cdot}
\end{tikzcd}\]
As $f$ is epic, the equality $u\circ f = v\circ g\circ f$ implies $u= v\circ g$. Since $u$ is monic, so is $g$.

Now, let $R\colon \catt\into\cat$ be a full mono-reflective subcategory, with reflector $L\colon \cat\to\catt$, and write $\eta$ for the unit of the adjunction $L\dashv R$. For any arrow $f\colon X\to Y$ in $\cat$, there is a naturality square as displayed below.
\[\begin{tikzcd}[cells={nodes={inner sep=2pt, outer sep=1pt}}]
\phantom{\cdot} \arrow{r}{f} \arrow{d}[swap]{\eta_{X}} & \phantom{\cdot} \arrow{d}{\eta_{Y}} \\
\phantom{\cdot} \arrow{r}[swap]{RL(f)} & \phantom{\cdot}
\end{tikzcd}\]
Since $\catt$ is a mono-reflective subcategory of $\cat$, the components of $\eta$ are epi-monic. If~$f$ is an epi-mono, then so is the composite $\eta_{Y}\circ f = RL(f)\circ \eta_{X}$. It follows from the 2-out-of-3 property established above that $RL(f)$ is an epi-mono. The functor~$R$ reflects epi-monos because it is faithful; therefore $L(f)$ is epi-monic.
\end{proof}

\begin{lemma}\label{l:equivalent-problems}
Let $\cat$ be an lfp category satisfying \EAP.
The following statements are equivalent for any full mono-reflective subcategory $\catt \into \cat$:
\begin{enumerate}[label=(\arabic*)]
\item\label{i:catt-balanced} $\catt$ is balanced;
\item\label{i:inverts-epi-monos} the reflector $\cat\to\catt$ inverts all epi-monos.
\end{enumerate}
\end{lemma}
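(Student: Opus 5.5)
We prove the two implications separately, writing $R\colon\catt\into\cat$ for the inclusion, $L$ for the reflector and $\eta$ for the unit; recall that the components of $\eta$ are epi-monos.

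\ref{i:catt-balanced} $\IMP$ \ref{i:inverts-epi-monos}. Let $f$ be an epi-mono in $\cat$. By Lemma~\ref{l:Ringel-restated}, which applies because $\cat$ satisfies \EAP, the arrow $L(f)$ is an epi-mono in $\catt$. Since $\catt$ is balanced, $L(f)$ is an isomorphism. This direction is immediate.

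\ref{i:inverts-epi-monos} $\IMP$ \ref{i:catt-balanced}. Let $g\colon X\to Y$ be an epi-mono in $\catt$; we show it is invertible.
\begin{enumerate}[label=(\roman*)]
\item By Lemma~\ref{l:basic-props-mono-reflections}, $R$ preserves epis, and it preserves monos because it is a right adjoint. Hence $R(g)$ is an epi-mono in $\cat$.
\item By hypothesis, $LR(g)$ is an isomorphism.
\item Since $\catt$ is full and replete in $\cat$, the counit $LR\to\id_{\catt}$ is a natural isomorphism. Naturality of the counit gives that $g$ is conjugate to $LR(g)$, so $g$ is an isomorphism.
\end{enumerate}
Alternatively, one can argue via orthogonality. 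Objects of $\catt$ are orthogonal to every unit component, and the reflector inverting $R(g)$ means every object of $\catt$ is orthogonal to $R(g)$. Taking the object $X$ and the identity $X\to X$, we obtain a retraction of $g$. A monic arrow that has a retraction and is epic is an isomorphism.

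The only subtle point is step~(i): that $R(g)$ is still epic in $\cat$. This is exactly what Lemma~\ref{l:basic-props-mono-reflections} supplies, and it uses mono-reflectivity. Note that \EAP\ is needed only for the first implication, through Lemma~\ref{l:Ringel-restated}.
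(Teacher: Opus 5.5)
Your proof is correct and follows the paper's argument: (1)$\Rightarrow$(2) via Lemma~\ref{l:Ringel-restated} and balancedness, and (2)$\Rightarrow$(1) by showing that the inclusion preserves epi-monos (Lemma~\ref{l:basic-props-mono-reflections} plus right adjointness), with the counit step that the paper leaves implicit written out. One small remark: invertibility of the counit $LR\to\id_{\catt}$ only requires the inclusion $\catt\into\cat$ to be fully faithful, so repleteness is not needed there.
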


\begin{proof}
Let $R\colon \catt\into\cat$ be the inclusion functor, $L\colon \cat\to\catt$ the reflector, and $\eta$ the unit of the adjunction $L\dashv R$.

\ref{i:catt-balanced} $\IMP$ \ref{i:inverts-epi-monos}. If $f$ is an epi-mono in $\cat$, then $L(f)$ is an epi-mono in $\catt$ by Lemma~\ref{l:Ringel-restated}. As $\catt$ is balanced, it follows that $L(f)$ is an isomorphism.

\ref{i:inverts-epi-monos} $\IMP$ \ref{i:catt-balanced}. It suffices to show that the inclusion $R\colon \catt\into\cat$ preserves epi-monos. In turn, this follows at once from the fact that $R$ preserves epimorphisms by Lemma~\ref{l:basic-props-mono-reflections}, and it preserves monomorphisms because it is right adjoint.
\end{proof}

Given a category $\cat$, write $\X$ for the class of all epi-monos in~$\cat$. Let $\ort{\X}$ be the associated orthogonality class, i.e., the full subcategory of $\cat$ consisting of those objects that are orthogonal to all epi-monos in $\cat$. 

\begin{lemma}\label{l:ort-included-in-sigma}
For any category $\cat$, we have $\ort{\X}\subseteq \Sat(\cat)$.
\end{lemma}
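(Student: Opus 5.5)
Take $A\in\ort{\X}$ and an epi-mono $m\colon A\emb B$. To show that $A$ is saturated, it suffices to prove that $m$ is an isomorphism. The idea is to apply orthogonality of $A$ to $m$ itself, using the identity of $A$ as the test arrow.

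Since $A$ is orthogonal to $m$, there is a unique $r\colon B\to A$ with $r\circ m=\id_{A}$. So $m$ is a split monomorphism with retraction $r$.

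Next, use that $m$ is epic. We have
\[
(m\circ r)\circ m = m\circ \id_{A} = \id_{B}\circ m .
\]
Cancelling $m$ on the right gives $m\circ r=\id_{B}$. Hence $m$ is an isomorphism with inverse $r$.

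Every epi-mono with domain $A$ is therefore invertible, so $A\in\Sat(\cat)$. Only the existence part of orthogonality (i.e., injectivity) is used here, and no assumption on $\cat$ is needed, consistent with the statement holding for any category. There is no real obstacle; the one point needing care is that the factorisation is taken through $m$ with $g=\id_{A}$, and that epicness of $m$ is what upgrades the retraction to a two-sided inverse.
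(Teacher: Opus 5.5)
Your proof is correct and is the paper's argument: orthogonality of $A$ to $m$ with test arrow $\id_{A}$ gives a retraction $r$ of $m$, and a split mono that is epic is an isomorphism. You additionally spell out that last step by cancelling $m$ in $(m\circ r)\circ m=\id_{B}\circ m$.
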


\begin{proof}
If $X\in \ort{\X}$, and $m\colon X\to Y$ is an epi-mono in $\cat$, then there is an arrow $r$ making the following diagram commute.
\[\begin{tikzcd}
X \arrow{d}[swap]{m} \arrow{r}{\id_{X}} & X \\
Y \arrow[dashed]{ur}[swap]{r}
\end{tikzcd}\]
It follows that $m$ is both a section and an epimorphism, hence an isomorphism. Thus, $X$ is saturated.
\end{proof}

We now assume that $\cat$ is an lfp category. Under this assumption, $\ort{\X}$ is a reflective subcategory of $\cat$ by Proposition~\ref{p:localise-epis}. 

\begin{proposition}\label{p:localisation-is-X-ort}
For any full and replete mono-reflective subcategory $\catt\into \cat$ of an lfp category~$\cat$, we have $\ort{\X}\subseteq \catt$. If, in addition, $\cat$ satisfies \EAP~and $\catt$ is balanced, then $\ort{\X}= \catt$.
\end{proposition}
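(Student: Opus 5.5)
For the first inclusion, I will take $X\in\ort{\X}$ and show it lies in $\catt$. Let $\eta_{X}\colon X\to RL(X)$ be the unit component at $X$. Since $\catt$ is mono-reflective, $\eta_{X}$ is an epi-mono, so $\eta_{X}\in\X$. Orthogonality of $X$ to $\eta_{X}$, applied to $\id_{X}$, gives an $r$ with $r\circ\eta_{X}=\id_{X}$. Then $\eta_{X}$ is a split mono and an epi, hence an isomorphism. Since $\catt$ is replete, $X\in\catt$. This is the argument of Lemma~\ref{l:ort-included-in-sigma}, specialised to the unit.

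For the reverse inclusion under \EAP{} with $\catt$ balanced, I will take $D\in\catt$ and an epi-mono $f\colon X\to Y$, and show $D$ is orthogonal to $f$. Uniqueness of extensions is automatic because $f$ is epic, so only existence is needed. By Lemma~\ref{l:equivalent-problems}, the reflector $L$ inverts $f$. Given $g\colon X\to D$, its transpose $\bar g\colon L(X)\to D$ in $\catt$ satisfies $R(\bar g)\circ\eta_{X}=g$, using $D\cong RL(D)$ and fullness. I then define the extension
\[
h\coloneqq R(\bar g)\circ R(L(f))^{-1}\circ\eta_{Y}\colon Y\to D.
\]
Naturality of the unit, $\eta_{Y}\circ f=RL(f)\circ\eta_{X}$, gives $h\circ f=R(\bar g)\circ\eta_{X}=g$, as required.

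The only step with real content is invoking Lemma~\ref{l:equivalent-problems}, which is where \EAP{} enters through Lemma~\ref{l:Ringel-restated}. Everything else is formal adjunction bookkeeping. I would also check that Lemma~\ref{l:equivalent-problems} needs no hypotheses beyond $\catt$ being full and mono-reflective in an lfp category with \EAP; all of these are available here.
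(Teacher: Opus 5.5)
Your proof is correct and is essentially the paper's argument unpacked: the paper identifies $\catt$ with $\ort{\ar{W}}$ for $\ar{W}\coloneqq L^{-1}(\text{isos})$. It then obtains the first inclusion from $\ar{W}\subseteq\X$ (via faithfulness of $L$) and the second from $\X\subseteq\ar{W}$ via Lemma~\ref{l:equivalent-problems}, whereas you check the two orthogonality inclusions directly. Your first inclusion uses only that the unit components $\eta_{X}$ lie in $\X$, so it avoids Lemma~\ref{l:basic-props-mono-reflections}, while your second rests on the same key lemma as the paper.
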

\begin{proof}
Let $\catt\into \cat$ be a mono-reflective subcategory, with reflector $L$. Every full and replete reflective subcategory is the localisation at the collection of arrows inverted by the reflector; thus we can identify $\catt$ with the localisation of $\cat$ at $\ar{W}\coloneqq L^{-1}(\text{isos})$. 

The functor $L$ is faithful by Lemma~\ref{l:basic-props-mono-reflections}, so it reflects epis and monos. It follows that $f$ is epi-mono whenever $L(f)$ is an isomorphism. That is, $\ar{W}\subseteq \X$.
As the reflector $\cat \to \ort{\X}$ inverts all epi-monos, hence all arrows in $\ar{W}$, we get $\ort{\X}\subseteq \catt$.

If, in addition, $\cat$ satisfies \EAP~and $\catt$ is balanced, then $\X\subseteq \ar{W}$ by Lemma~\ref{l:equivalent-problems}. It follows that $\ort{\X}= \catt$.
\end{proof}

\begin{theorem}\label{t:Beth-companion-amalgam-orthogonal}
Let $\cat$ be an lfp category satisfying \EAP.
The following statements are equivalent:
\begin{enumerate}[label=(\arabic*)]
\item\label{i:cat-has-Beth-comp-amal} $\cat$ admits a Beth companion;
\item\label{i:ort-X-is-Beth-comp} $\ort{\X}$ is a Beth companion of $\cat$;
\item\label{i:ort-X-closed-dir-colim} $\ort{\X}$ is closed under directed colimits in $\cat$, and every object of $\cat$ admits an extension with codomain in $\ort{\X}$.
\end{enumerate}
\end{theorem}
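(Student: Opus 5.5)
We prove the cycle \ref{i:cat-has-Beth-comp-amal} $\IMP$ \ref{i:ort-X-is-Beth-comp} $\IMP$ \ref{i:ort-X-closed-dir-colim} $\IMP$ \ref{i:cat-has-Beth-comp-amal}, leaning on Theorem~\ref{t:Beth-companion-unique} and Proposition~\ref{p:localisation-is-X-ort}.

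For \ref{i:cat-has-Beth-comp-amal} $\IMP$ \ref{i:ort-X-is-Beth-comp}: a Beth companion $\B{\cat}$ is a full and replete, balanced, mono-reflective subcategory. Since $\cat$ satisfies \EAP, Proposition~\ref{p:localisation-is-X-ort} gives $\ort{\X}=\B{\cat}$, so $\ort{\X}$ is the Beth companion.

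For \ref{i:ort-X-is-Beth-comp} $\IMP$ \ref{i:ort-X-closed-dir-colim}: the inclusion of a Beth companion preserves directed colimits, and $\ort{\X}$ is replete. Hence $\ort{\X}$ is closed under directed colimits in $\cat$. Moreover, each unit component $\eta_A\colon A\to RL(A)$ is a monomorphism into $\ort{\X}$.

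The implication \ref{i:ort-X-closed-dir-colim} $\IMP$ \ref{i:cat-has-Beth-comp-amal} carries the real content. By Proposition~\ref{p:localise-epis}, $\ort{\X}$ is a full reflective subcategory of $\cat$, and it is replete. It is closed under limits, since orthogonality classes always are. By hypothesis it is also closed under directed colimits, so by \cite[Corollary~2.4]{MP1987} it is lfp and its inclusion is an lfp morphism.

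It remains to show that $\ort{\X}$ is mono-reflective and balanced. For mono-reflectivity, take an object $A$ and a mono $m\colon A\emb B$ with $B\in\ort{\X}$. The mono $m$ factors through the unit $\eta_A$ as $m=\bar m\circ\eta_A$, so $\eta_A$ is monic. The unit of any reflection onto an orthogonality class of epimorphisms is itself epic (for example by the standard construction, or because $\ort{\X}$ is then epireflective). I expect this point to need some care; if necessary I would prove it directly by factoring $\eta_A$ through its strong epi--mono image and using that $\ort{\X}$ is closed under subobjects through which maps are unique. Hence $\eta_A$ is an epi-mono.

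With mono-reflectivity in hand, balancedness follows from Lemma~\ref{l:equivalent-problems}, direction \ref{i:inverts-epi-monos} $\IMP$ \ref{i:catt-balanced}. That direction only needs the reflector to invert all epi-monos, which holds by construction: the reflector onto $\ort{\X}$ inverts every member of $\X$, because orthogonality to $f$ makes $\cat(f,D)$ bijective for every $D\in\ort{\X}$. So $\ort{\X}$ is a Beth companion.

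One could alternatively show $\ort{\X}=\Sat(\cat)$ and then invoke Theorem~\ref{t:Beth-companion-unique}, but the direct route avoids this. The main obstacle is confirming that the unit is epic, i.e.\ that the reflection is epi-reflective; this is standard for localisations at classes of epimorphisms, via Lemma~\ref{l:basic-props-mono-reflections}-style arguments or \cite{AHS2009}.
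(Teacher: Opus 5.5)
Your proof is correct, and its skeleton matches the paper's. Both use Proposition~\ref{p:localisation-is-X-ort} for (1)$\Rightarrow$(2) and the definition for (2)$\Rightarrow$(3). For (3)$\Rightarrow$(1), both use closure under limits plus \cite[Corollary~2.4]{MP1987} to get local finite presentability, and the given extensions to get mono-reflectivity.

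The one real difference is how you get balancedness.
\begin{itemize}
\item The paper argues as in Theorem~\ref{t:Beth-companion-unique}. By Lemma~\ref{l:ort-included-in-sigma}, every object of $\ort{\X}$ is saturated in $\cat$, so every mono in $\ort{\X}$ is extremal in $\cat$. The inclusion reflects extremal monos by Lemma~\ref{l:basic-props-mono-reflections}.
\item You instead apply the implication (2)$\Rightarrow$(1) of Lemma~\ref{l:equivalent-problems}. The reflector onto $\ort{\X}$ inverts every epi-mono, since $\cat(f,D)$ is bijective for all $D$ in $\ort{\X}$ and Yoneda applies.
\end{itemize}
Your route is just as short and never mentions saturated objects; that direction of the lemma does not even use \EAP. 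The paper's route simply reuses an argument already given, and keeps the inclusion $\ort{\X}\subseteq\Sat(\cat)$ in the foreground.

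The step you flag as the main obstacle is not one. Mono-reflectivity in the sense of Definition~\ref{def:Beth-com} only requires monic unit components, and that is all Lemma~\ref{l:equivalent-problems} uses. In any case, epicness is automatic for a full reflective subcategory with monic unit:
\begin{itemize}
\item Suppose $f\circ\eta_A=g\circ\eta_A$ with $f,g\colon RL(A)\to C$.
\item Then $\eta_C\circ f$ and $\eta_C\circ g$ agree after precomposing with $\eta_A$, and their codomain $RL(C)$ lies in the subcategory.
\item So they are equal by the universal property of $\eta_A$, and $\eta_C$ monic gives $f=g$.
\end{itemize}
Your fallback argument would also need the (epi, strong mono) factorisation, not the (strong epi, mono) one. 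An orthogonality class of epimorphisms is closed under strong subobjects but need not be closed under arbitrary subobjects; for instance, $\N\into\Z$ in $\AMon$. None of this is actually needed.
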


\begin{proof}
The implication \ref{i:cat-has-Beth-comp-amal} $\IMP$ \ref{i:ort-X-is-Beth-comp} is an immediate consequence of Proposition~\ref{p:localisation-is-X-ort}.

The implications \ref{i:ort-X-is-Beth-comp} $\IMP$ \ref{i:ort-X-closed-dir-colim} and \ref{i:ort-X-closed-dir-colim}  $\IMP$ \ref{i:cat-has-Beth-comp-amal} are completely analogous to the corresponding ones in Theorem~\ref{t:Beth-companion-unique}. In particular, for \ref{i:ort-X-closed-dir-colim}  $\IMP$ \ref{i:cat-has-Beth-comp-amal}, observe that $\ort{\X}$ is always closed under limits in $\cat$ (see e.g.\ \cite[Observation~1.34]{ar94book}), and the assumption that every object of $\cat$ admits an extension with codomain in $\ort{\X}$ implies that $\ort{\X}$ is a \emph{mono}-reflective subcategory of~$\cat$. To show that $\ort{\X}$ is a balanced category, we reason as in the proof of Theorem~\ref{t:Beth-companion-unique}, using the fact that $\ort{\X}\subseteq \Sat(\cat)$ by Lemma~\ref{l:ort-included-in-sigma}.
\end{proof}

In view of Theorem~\ref{t:Beth-companion-amalgam-orthogonal}, it is useful to have criteria ensuring that $\ort{\X}$ is closed under directed colimits in $\cat$ and that every object of $\cat$ can be extended to an object of~$\ort{\X}$, as these properties imply that $\ort{\X}$ is the Beth companion of $\cat$ (assuming the latter category satisfies \EAP).
To characterise when every object of $\cat$ can be extended to an object of $\ort{\X}$, we consider the following notion.

\begin{definition}\label{d:TEM}
A category has 
\begin{enumerate}
\item[(TEM)]\label{TEM} \emph{transferable epi-monos} if any span of arrows $\alpha,\beta$, with $\alpha$ epi-monic, can be completed to a commutative square as displayed below, where $\gamma$ is epi-monic.
\[\begin{tikzcd}[cells={nodes={inner sep=2pt, outer sep=1pt}}]
\phantom{\cdot} \arrow{r}{\alpha} \arrow{d}[swap]{\beta} & \phantom{\cdot} \arrow[dashed]{d}{\delta} \\
\phantom{\cdot} \arrow[dashed]{r}[rightarrowtail]{\gamma} & \phantom{\cdot}
\end{tikzcd}\]
\end{enumerate}
\end{definition}
If a category has pushouts of epi-monos along all arrows, then it has transferable epi-monos precisely when epi-monos are pushout stable. In that case, \TEM~$\IMP$~\EAP.

\begin{lemma}\label{l:ort-vs-sat}
The following statements are equivalent for any lfp category $\cat$:
\begin{enumerate}[label=(\arabic*)]
\item\label{i:X-local-extension} every object of $\cat$ admits an extension with codomain in $\ort{\X}$;
\item\label{i:sat-and-ort-coincide} $\Sat(\cat)=\ort{\X}$;
\item\label{i:transferable-epi-monos} $\cat$ has transferable epi-monos.
\end{enumerate}
\end{lemma}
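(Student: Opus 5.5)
The plan is to prove the cycle \ref{i:X-local-extension} $\IMP$ \ref{i:sat-and-ort-coincide} $\IMP$ \ref{i:transferable-epi-monos} $\IMP$ \ref{i:sat-and-ort-coincide} $\IMP$ \ref{i:X-local-extension}. Two facts will be used throughout: $\ort{\X}\subseteq\Sat(\cat)$ (Lemma~\ref{l:ort-included-in-sigma}), and every object admits a saturated epi-extension (Lemma~\ref{l:existence-saturated-epi-ext}).

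\ref{i:sat-and-ort-coincide} $\IMP$ \ref{i:X-local-extension} is immediate from Lemma~\ref{l:existence-saturated-epi-ext}: a saturated epi-extension of an object is an extension with codomain in $\Sat(\cat)=\ort{\X}$.

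\ref{i:sat-and-ort-coincide} $\IMP$ \ref{i:transferable-epi-monos}. Take a span $\alpha\colon A\to B$, $\beta\colon A\to C$ with $\alpha$ epi-monic. Let $\eta\colon C\emb E$ be a saturated epi-extension of $C$. Since $E\in\Sat(\cat)=\ort{\X}$ and $\alpha\in\X$, the arrow $\eta\circ\beta$ extends along $\alpha$ to some $\delta\colon B\to E$ with $\delta\circ\alpha=\eta\circ\beta$. Setting $\gamma\coloneqq\eta$, which is epi-monic, completes the required square.

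\ref{i:transferable-epi-monos} $\IMP$ \ref{i:sat-and-ort-coincide}. It suffices to show $\Sat(\cat)\subseteq\ort{\X}$. Let $E$ be saturated, $f\colon X\to Y$ an epi-mono, and $g\colon X\to E$ any arrow. Applying \TEM~to the span $(f,g)$ yields an epi-mono $\gamma\colon E\to P$ and an arrow $\delta\colon Y\to P$ with $\delta\circ f=\gamma\circ g$. As $E$ is saturated, $\gamma$ is an isomorphism, so $\gamma^{-1}\circ\delta$ extends $g$ along $f$. Uniqueness of this extension is automatic because $f$ is epic.

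\ref{i:X-local-extension} $\IMP$ \ref{i:sat-and-ort-coincide}. Again we need $\Sat(\cat)\subseteq\ort{\X}$. Let $A$ be saturated, and let $\eta_A\colon A\to A^{*}$ be the unit of the reflection onto $\ort{\X}$, which exists by Proposition~\ref{p:localise-epis}. By assumption there is a mono $m\colon A\emb B$ with $B\in\ort{\X}$. Then $m$ factors as $m=h\circ\eta_A$, so $\eta_A$ is monic.

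The main obstacle is showing that $\eta_A$ is also epic, i.e.\ that the reflection onto an orthogonality class of epimorphisms is an epireflection. I would derive this from the standard construction behind Proposition~\ref{p:localise-epis}, where the reflection is built as a transfinite composite of pushouts and colimits of members of $\X$. Since epis are stable under pushouts, composites and colimits, the resulting unit is epic; this can also be quoted from \cite{AHS2009}.

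Granting this, $\eta_A$ is an epi-mono with saturated domain, hence an isomorphism. Since $\ort{\X}$ is replete, $A\in\ort{\X}$, which closes the cycle.
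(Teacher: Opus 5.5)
Your proof is correct. For (2)$\Rightarrow$(1), (2)$\Rightarrow$(3) and (3)$\Rightarrow$(2) it coincides with the paper's argument; the only difference is in (1)$\Rightarrow$(2), at the step you call the main obstacle.

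The paper handles that step purely formally. Your factorisation $m=h\circ\eta_A$ works for every object $A$, not only saturated ones, so by (1) all components of the unit are monic. A full mono-reflective subcategory is then automatically bireflective, as recalled at the start of Section~\ref{s:Beth-saturated}. Explicitly, if $u\circ\eta_A=v\circ\eta_A$ with $u,v\colon A^{*}\to C$, then $\eta_C\circ u=\eta_C\circ v$ by uniqueness in the universal property of $\eta_A$, and hence $u=v$ because $\eta_C$ is monic.

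You instead show that the reflection onto $\ort{\X}$ is epic unconditionally, using only that $\X$ consists of epimorphisms. This is true and slightly more informative: it shows hypothesis (1) is needed only for monicity. However, it depends on how the reflection is constructed, and your transfinite sketch passes over the fact that $\X$ is a proper class. Co-wellpoweredness is what makes that sketch work.

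A cleaner self-contained version of your step runs as follows. $\ort{\X}$ is closed under strong subobjects, by the diagonal fill-in of an epi against a strong mono. Factor $\eta_A=m\circ e$ with $e\colon A\to C$ epic and $m$ a strong mono; such factorisations exist because lfp categories are cocomplete and co-wellpowered. Then $C\in\ort{\X}$, so $e=k\circ\eta_A$ for some $k$. Since $m\circ k\circ\eta_A=\eta_A$, the universal property gives $m\circ k=\id$, so $m$ is an isomorphism and $\eta_A$ is epic.

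Both routes close the argument. The paper's is shorter and needs no knowledge of how the reflection is built.
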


\begin{proof}
\ref{i:X-local-extension} $\IFF$ \ref{i:sat-and-ort-coincide}. As every object of an lfp category has a saturated (epi-)extension by Lemma~\ref{l:existence-saturated-epi-ext}, the implication \ref{i:sat-and-ort-coincide} $\IMP$ \ref{i:X-local-extension} is immediate. For the converse implication, suppose that every object of $\cat$ admits an extension with codomain in $\ort{\X}$. Then $\ort{\X}$ is a mono-reflective subcategory of $\cat$. The unit of this mono-reflection is component-wise epi-monic, hence its component at any saturated object is an isomorphism. It follows that $\Sat(\cat)\subseteq \ort{\X}$, and so $\Sat(\cat)=\ort{\X}$ by Lemma~\ref{l:ort-included-in-sigma}.

\ref{i:sat-and-ort-coincide} $\IFF$ \ref{i:transferable-epi-monos}.
Suppose that $\ort{\X} = \Sat(\cat)$. We claim that $\cat$ has transferable epi-monos. Consider an epi-mono $m\colon Y\to W$ and an arrow $f\colon Y\to X$. Since $\cat$ is lfp, $X$ admits a saturated epi-extension $n\colon X\to Z$. As $\Sat(\cat)\subseteq \ort{\X}$, there is an arrow $g$ making the following diagram commute. Thus, $\cat$ has transferable epi-monos.
\[\begin{tikzcd}
Y \arrow{d}[swap]{m} \arrow{r}{f} & X \arrow{d}{n} \\
W \arrow[dashed]{r}{g} & Z
\end{tikzcd}\]

Conversely, we claim that if $\cat$ has transferable epi-monos, then $\ort{\X} = \Sat(\cat)$ (for this, we do not need to assume that $\cat$ is lfp). 
Suppose that $X$ is a saturated object of $\cat$, and consider a span $f,m$ as displayed below, with $m$ epi-mono.
\[\begin{tikzcd}
Y \arrow{d}[swap]{m} \arrow{r}{f} & X \arrow[dashed]{d}{n} \\
W \arrow[dashed]{r}{h} & Z
\end{tikzcd}\]
Since $\cat$ has transferable epi-monos, there exist arrows $h,n$, with $n$ epi-mono, making the above square commute. As $X$ is saturated, $n$ is an isomorphism. Hence, $g\coloneqq n^{-1}\circ h\colon W\to X$ satisfies $g\circ m = f$, showing that $X$ is orthogonal to~$m$. This proves that $\Sat(\cat)\subseteq \ort{\X}$; the other inclusion follows from Lemma~\ref{l:ort-included-in-sigma}.
\end{proof}

\begin{remark}\label{rmk:EAP-vs-TEM}
Let $\cat$ be an lfp category satisfying \EAP. It follows from Theorem~\ref{t:Beth-companion-amalgam-orthogonal} and Lemma~\ref{l:ort-vs-sat} that, if $\cat$ has a Beth companion, then it satisfies the strengthening \TEM~of \EAP. In other words, \TEM~and \EAP~are equivalent for any lfp category that admits a Beth companion.
\end{remark}

Next, we characterise when $\ort{\X}$ is closed under directed colimits in $\cat$, assuming that $\cat$ has transferable epi-monos.

\begin{lemma}\label{l:X-ort-dir-colim-fp-domains}
Let $\cat$ be an lfp category satisfying \TEM. The following are equivalent:
\begin{enumerate}[label=(\arabic*)]
\item\label{i:ort-X-closed-dir-col} $\ort{\X}$ is closed under directed colimits in $\cat$;
\item\label{i:saturation-epimonos-fp-domains} $\ort{\X}=\X^{\perp}_{\mathrm{fpd}}$, where $\X_{\mathrm{fpd}}$ is the class of epi-monos in $\cat$ with finitely presentable domains.
\end{enumerate}
\end{lemma}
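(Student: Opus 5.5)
We prove the two implications separately. Throughout, recall that $\ort{\X}\subseteq\X^{\perp}_{\mathrm{fpd}}$ holds trivially, since $\X_{\mathrm{fpd}}\subseteq\X$. Moreover, since every member of $\X_{\mathrm{fpd}}$ is epic, orthogonality to $\X_{\mathrm{fpd}}$ amounts to injectivity with respect to it.

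\ref{i:saturation-epimonos-fp-domains} $\IMP$ \ref{i:ort-X-closed-dir-col}. We show directly that $\X^{\perp}_{\mathrm{fpd}}$ is closed under directed colimits. Let $X=\colim_i X_i$ be a directed colimit of objects of $\X^{\perp}_{\mathrm{fpd}}$, and take $m\colon A\to W$ in $\X_{\mathrm{fpd}}$ together with an arrow $f\colon A\to X$. Since $A$ is finitely presentable, $f$ factors through some colimit injection as $A\to X_i\to X$. By orthogonality of $X_i$ to $m$, the arrow $A\to X_i$ extends along $m$ to $W\to X_i$. Composing with $X_i\to X$ gives the required extension, and it is unique because $m$ is epic. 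Note that the codomain $W$ need not be finitely presentable; only the finite presentability of the domain is used. Hence $\ort{\X}=\X^{\perp}_{\mathrm{fpd}}$ is closed under directed colimits.

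\ref{i:ort-X-closed-dir-col} $\IMP$ \ref{i:saturation-epimonos-fp-domains}. We must show $\X^{\perp}_{\mathrm{fpd}}\subseteq\ort{\X}$. By Lemma~\ref{l:ort-vs-sat} (using \TEM), $\ort{\X}=\Sat(\cat)$, so it suffices to show that every $X\in\X^{\perp}_{\mathrm{fpd}}$ is saturated. Let $h\colon X\to Y$ be an epi-mono. Write $X$ as a directed colimit of finitely presentable objects $A_i$, with injections $c_i\colon A_i\to X$.

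For each $i$, we would like to factor $h\circ c_i$ through an epi-mono with domain $A_i$. The natural candidate is the pushout-type construction: form the pushout $B_i$ of $h$ along $c_i$, or more precisely take the (strong epi, mono) image of the induced map. Using \TEM\ on the span $(h,c_i)$ gives a square $A_i\to Z_i$ epi-monic with $Y\to Z_i$, but in the wrong direction. The cleaner route is to exploit closure under directed colimits differently.

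Since $\ort{\X}$ is closed under directed colimits and limits, it is a reflective subcategory closed under directed colimits. Let $r\colon A_i\to A_i^{*}$ be the saturated epi-extensions; these lie in $\ort{\X}$. The finitely presentable $A_i$ have epi-extensions $A_i\to A_i^{*}$. The key is then that every epi-mono $A_i\to E$ into a saturated object is a directed colimit of epi-monos $A_i\to E_j$ with $E_j$ finitely generated, each of which lies in $\X_{\mathrm{fpd}}$. Since $X$ is orthogonal to these, $c_i$ extends along $A_i\to E_j$, hence along $A_i\to A_i^{*}$. By uniqueness and naturality these extensions glue into a map $\colim_i A_i^{*}\to X$ retracting the canonical map $X\to\colim_i A_i^{*}$. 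This colimit lies in $\ort{\X}$ by \ref{i:ort-X-closed-dir-col}, and the canonical map is an epi-mono. Because $X$ is a retract of an object of $\ort{\X}$, which is closed under retracts (being closed under limits), $X$ belongs to $\ort{\X}$.

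The main obstacle is precisely this gluing step. We must verify that the $A_i^{*}$ form a directed diagram; by Lemma~\ref{l:Ringel-restated}, since \TEM\ implies \EAP, this amounts to reflecting along $\ort{\X}$ functorially, so that $A_i^{*}=L(A_i)$. We must also check that $X\to\colim_i L(A_i)\cong L(X)$ is the unit. With $L$ in hand, the argument simplifies: it suffices to show $X$ is orthogonal to each unit $\eta_{A_i}\colon A_i\to L(A_i)$, which is an epi-mono with finitely presentable domain and hence lies in $\X_{\mathrm{fpd}}$. Then $c_i$ extends uniquely to $L(A_i)\to X$, and these extensions are compatible by epicness of the units. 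Since $L$ preserves colimits, they assemble into $L(X)\to X$ splitting $\eta_X$. As $\eta_X$ is epic, it is then an isomorphism, so $X\in\ort{\X}$. This is the version we will write up.
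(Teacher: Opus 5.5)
Your final argument is correct and is essentially the paper's proof. The direction (2)$\Rightarrow$(1) is identical. For (1)$\Rightarrow$(2), both arguments rest on $\eta_X$ being the directed colimit of the units $\eta_{A_i}\in\X_{\mathrm{fpd}}$; the paper then concludes via closure under colimits of the arrows inverted by the reflector onto $\X^{\perp}_{\mathrm{fpd}}$, while you build the retraction of $\eta_X$ by hand.

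When writing it up, discard the exploratory middle paragraph: its claim about directed colimits of epi-monos $A_i\to E_j$ is unjustified and not needed. Also say explicitly that the gluing step uses (1). The colimit of the $RL(A_i)$ must be formed in $\cat$ in order to map into $X$, and it is $RL(X)$ only because $\ort{\X}$ is closed under directed colimits in $\cat$, not merely because $L$ preserves colimits.
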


\begin{proof}
\ref{i:ort-X-closed-dir-col} $\IMP$ \ref{i:saturation-epimonos-fp-domains}. Let $A$ be any object of $\cat$, and let $D_{A}$ be the canonical (directed) diagram of finitely presentable objects of $\cat$ satisfying $A\cong \colim{D_{A}}$. Recall that $\ort{\X}$ is mono-reflective in $\cat$ by Proposition~\ref{p:localise-epis} and Lemma~\ref{l:ort-vs-sat}. Write $R\colon \ort{\X}\into \cat$ for the inclusion functor, $L$ for its left adjoint, and $\eta$ for the unit of this adjunction. Then $\eta_{A}$ is the colimit, in the category of arrows of $\cat$, of the arrows~$\eta_{B}$, where $B$ ranges over the image of the diagram $D_{A}$. Just observe that colimits in the arrow category are computed componentwise, and both $L$ and $R$ preserve directed colimits. 

The class of arrows inverted by the reflector $\cat \to \X^{\perp}_{\mathrm{fpd}}$ contains each arrow of the form $\eta_{B}$ and is closed under colimits in the arrow category; therefore, it contains all components of the unit $\eta$. It follows that $\ort{\X}$, which can be identified with the localisation at the components of $\eta$, coincides with $\X^{\perp}_{\mathrm{fpd}}$.

\ref{i:saturation-epimonos-fp-domains} $\IMP$ \ref{i:ort-X-closed-dir-col}. Each arrow in $\X_{\mathrm{fpd}}$ is an epimorphism with a finitely presentable domain, hence $\X^{\perp}_{\mathrm{fpd}}$ is closed in $\cat$ under directed colimits (this follows from a straightforward adaptation of \cite[Proposition~1.35]{ar94book}). Thus, so is $\ort{\X}$.
\end{proof}

The following result is a consequence of Theorem~\ref{t:Beth-companion-amalgam-orthogonal} and Lemma~\ref{l:ort-vs-sat}.

\begin{corollary}\label{cor:Beth-TEM-omega-ort}
Let $\cat$ be an lfp category with transferable epi-monos. The following statements are equivalent:
\begin{enumerate}[label=(\arabic*)]
\item\label{i:TEM-has-BC} $\cat$ admits a Beth companion;
\item\label{i:TEM-Sat-dir-col} $\Sat(\cat)$ ($=\ort{\X}$) is closed under directed colimits in $\cat$;
\item\label{i:TEM-Sat-inj-class} $\Sat(\cat)$ ($=\ort{\X}$) is an $\omega$-injectivity class in $\cat$. 
\end{enumerate}
\end{corollary}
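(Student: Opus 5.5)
We are under TEM, so Lemma~\ref{l:ort-vs-sat} gives $\Sat(\cat)=\ort{\X}$ and guarantees that every object of $\cat$ admits an extension with codomain in $\ort{\X}$. Since TEM implies EAP (given that lfp categories have pushouts), Theorem~\ref{t:Beth-companion-amalgam-orthogonal} applies.

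\ref{i:TEM-has-BC} $\IFF$ \ref{i:TEM-Sat-dir-col}. By Theorem~\ref{t:Beth-companion-amalgam-orthogonal}, $\cat$ admits a Beth companion if and only if $\ort{\X}$ is closed under directed colimits and every object extends into $\ort{\X}$. The second condition holds automatically by Lemma~\ref{l:ort-vs-sat}, so only closure under directed colimits remains, and this is exactly \ref{i:TEM-Sat-dir-col}. (Alternatively, $\ort{\X}$ is always closed under limits, so Theorem~\ref{t:Beth-companion-unique}\ref{i:Sigma-axiomatisable} gives the same conclusion directly.)

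\ref{i:TEM-Sat-inj-class} $\IMP$ \ref{i:TEM-Sat-dir-col}. Any $\omega$-injectivity class is closed under directed colimits by Theorem~\ref{t:charact-inj-classes}.

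\ref{i:TEM-Sat-dir-col} $\IMP$ \ref{i:TEM-Sat-inj-class}. By Theorem~\ref{t:charact-inj-classes}, we must show that $\Sat(\cat)$ is closed under products, directed colimits, and $\omega$-pure subobjects. Products are handled because orthogonality classes are closed under limits, and directed colimits are the hypothesis. This leaves $\omega$-pure subobjects, which is the main step.

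Let $p\colon X\to Y$ be $\omega$-pure with $Y\in\ort{\X}$. By Lemma~\ref{l:X-ort-dir-colim-fp-domains}, $\ort{\X}=\X^{\perp}_{\mathrm{fpd}}$, so it suffices to show that $X$ is injective (equivalently, since the maps are epic, orthogonal) with respect to every epi-mono $m\colon A\to B$ with $A$ finitely presentable. The codomain $B$ need not be finitely presentable. To handle this, write $B$ as a directed colimit of finitely presentable objects $B_i$. Given $f\colon A\to X$, the composite $p\circ f$ extends along $m$ to some $g\colon B\to Y$.

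Since $A$ is finitely presentable, $m$ factors through some $B_i\to B$. We may then apply $\omega$-purity to the square formed by $A\to B_i$, $f$, $p$, and $g$ restricted to $B_i$. This yields $B_i\to X$ extending $f$, but only on $B_i$, not on all of $B$; this is the delicate point.

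To resolve it, I would first try the following. Consider the pushout $X+_A B$ along $f$. By TEM, its leg $X\to X+_A B$ is epi-mono (pushout-stability). We know $p\circ f$ extends to $B$, so $p$ factors through this leg, giving $X\to X+_A B\to Y$ whose composite $p$ is $\omega$-pure. Hence $X\to X+_A B$ is itself $\omega$-pure (purity is left-cancellable) and therefore a regular mono by Remark~\ref{rm:purity}. A regular mono that is also epic is an isomorphism. Thus $X\cong X+_A B$, and $f$ extends along $m$. This argument in fact avoids the need for finite presentability of $A$ beyond its use in invoking Theorem~\ref{t:charact-inj-classes}.
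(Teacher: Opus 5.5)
Your proof is correct, and its outer structure is the paper's. You get (1)$\IFF$(2) from Theorem~\ref{t:Beth-companion-amalgam-orthogonal} (or Theorem~\ref{t:Beth-companion-unique}) together with Lemma~\ref{l:ort-vs-sat}, and (2)$\IFF$(3) from Theorem~\ref{t:charact-inj-classes}. The only real work is closure under $\omega$-pure subobjects, and that is where you diverge.

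The paper uses the saturation description and weakens ``$\omega$-pure'' to ``extremal mono''. Given an extremal mono $X\to Y$ with $Y$ saturated and an arbitrary epi-mono $e$ out of $X$, it uses \TEM{} to transfer $e$ along $X\to Y$ to an epi-mono out of $Y$. This is invertible by saturation, so $X\to Y$ factors through $e$, and cancelling extremality forces $e$ to be an isomorphism.

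You use the orthogonality description instead. Given $f\colon A\to X$ and a test epi-mono $m\colon A\to B$, you push $m$ out along $f$ and get an epi-mono $m'\colon X\to X+_{A}B$ from \TEM. Orthogonality of $Y$ plus the pushout's universal property then factor $p$ through $m'$, and you cancel purity.

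The two arguments are closely parallel: each uses \TEM{} once to factor the subobject through an epi-mono out of $X$, then cancels. There are two small differences.
\begin{itemize}
\item The paper's version uses \TEM{} only as a square-completion property and needs no pushouts. Yours needs the pushout's universal property to produce the factorisation.
\item The paper's conclusion covers all extremal subobjects. Yours does too if you cancel extremality rather than purity, since $u\circ m'$ extremal implies $m'$ extremal.
\end{itemize}

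Your pushout argument works for an arbitrary epi-mono $m$, with no finiteness assumption on $A$. So the appeal to Lemma~\ref{l:X-ort-dir-colim-fp-domains} and the abandoned attempt via the $B_{i}$ are unnecessary and can be deleted. Hypothesis (2) is then used only for the directed-colimit clause of Theorem~\ref{t:charact-inj-classes}.
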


\begin{proof}
By Lemma~\ref{l:ort-vs-sat}, we can assume throughout the proof that $\Sat(\cat)=\ort{\X}$.

\ref{i:TEM-has-BC} $\IMP$ \ref{i:TEM-Sat-dir-col}. This is an immediate consequence of Theorem~\ref{t:Beth-companion-unique}. 

\ref{i:TEM-Sat-dir-col} $\IMP$ \ref{i:TEM-Sat-inj-class}. By Theorem~\ref{t:charact-inj-classes}, it suffices to prove that $\Sat(\cat)$ is closed in $\cat$ under $\omega$-pure subobjects; just recall that $\ort{\X}$ is always closed in $\cat$ under limits, and thus so is $\Sat(\cat)$. Every $\omega$-pure morphism in an lfp category is a regular monomorphism (see e.g.\ \cite[Proposition~2.31]{ar94book}), hence an extremal monomorphism. Therefore, it is enough to show that the class of saturated objects in $\cat$ is closed under extremal subobjects. Consider an extremal mono $m\colon X \to Y$ in $\cat$, with $Y$ saturated, and let $f\colon X\to W$ be an epi-mono. We must prove that $f$ is an isomorphism. By \TEM, the span formed by $m$ and $f$ can be completed into a commutative square as displayed below, with $g$ epi-monic.
\[\begin{tikzcd}
X \arrow{r}{m} \arrow{d}[swap]{f} & Y \arrow{d}{g} \\
W \arrow{r}{h} & Z
\end{tikzcd}\]
Since $Y$ is saturated, $g$ is an isomorphism, and therefore $h\circ f = g\circ m$ is an extremal mono. It follows that $f$ is an extremal mono, hence an isomorphism.

\ref{i:TEM-Sat-inj-class} $\IMP$ \ref{i:TEM-has-BC}. Every $\omega$-injectivity class in $\cat$ is closed under directed colimits in $\cat$ by Theorem~\ref{t:charact-inj-classes}, so $\Sat(\cat)=\ort{\X}$ is a Beth companion of $\cat$ by Theorem~\ref{t:Beth-companion-amalgam-orthogonal} and Lemma~\ref{l:ort-vs-sat}.
\end{proof}

The following simple consequence of the previous results is particularly useful for constructing Beth companions. See, for example, the cases of bounded distributive lattices and regular rings in Section~\ref{s:examples}.

\begin{corollary}\label{cor:S-perp-Beth-comp}
Let $\cat$ be an lfp category with transferable epi-monos, and let $\ar{S}$ be a class of epi-monos in $\cat$ with finitely presentable domains. If $\ort{\ar{S}}$ is balanced, then it is the Beth companion of~$\cat$.
\end{corollary}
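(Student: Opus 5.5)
The plan is to show that $\ort{\ar{S}}$ coincides with $\Sat(\cat)=\ort{\X}$ and that it is closed under directed colimits. The conclusion then follows from Corollary~\ref{cor:Beth-TEM-omega-ort}, or more directly from Theorem~\ref{t:Beth-companion-amalgam-orthogonal} together with Lemma~\ref{l:ort-vs-sat}.

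First, since $\ar{S}\subseteq\X$, we have $\ort{\X}\subseteq\ort{\ar{S}}$. Every arrow in $\ar{S}$ is an epimorphism, so $\ort{\ar{S}}$ is reflective by Proposition~\ref{p:localise-epis}. Because $\cat$ has \TEM, Lemma~\ref{l:ort-vs-sat} gives $\ort{\X}=\Sat(\cat)$, and every object admits an epi-extension into $\Sat(\cat)=\ort{\X}\subseteq\ort{\ar{S}}$. Hence every object of $\cat$ has an extension with codomain in $\ort{\ar{S}}$, so $\ort{\ar{S}}$ is a full, replete, mono-reflective subcategory of $\cat$. By hypothesis it is balanced. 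Proposition~\ref{p:saturated-Beth-comp-lfp} then yields $\ort{\ar{S}}=\Sat(\cat)$. This proposition only needs a balanced, full, replete, mono-reflective subcategory of an lfp category, which is exactly what we have.

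Second, I would check closure under directed colimits. Each member of $\ar{S}$ is an epimorphism with finitely presentable domain. By the adaptation of \cite[Proposition~1.35]{ar94book} already invoked in the proof of Lemma~\ref{l:X-ort-dir-colim-fp-domains}, $\ort{\ar{S}}$ is closed under directed colimits in $\cat$. The argument is as follows. Take a map from the domain of $s\in\ar{S}$ into a directed colimit of objects of $\ort{\ar{S}}$. It factors through some stage of the diagram, where it extends along $s$, and composing with the colimit injection gives the required extension. Uniqueness is automatic because $s$ is epic.

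Therefore $\Sat(\cat)=\ort{\ar{S}}$ is closed under directed colimits, and by Corollary~\ref{cor:Beth-TEM-omega-ort} $\cat$ admits a Beth companion, namely $\Sat(\cat)=\ort{\ar{S}}$. The only delicate point is the directed-colimit closure, because the codomains of arrows in $\ar{S}$ need not be finitely presentable. This is where the epimorphism hypothesis is used: existence of extensions only requires a finitely presentable domain, and uniqueness comes for free from epicity.
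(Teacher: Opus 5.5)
Your proof is correct and follows essentially the paper's route. You show that $\ort{\ar{S}}$ is mono-reflective using $\ort{\X}\subseteq\ort{\ar{S}}$ and Lemma~\ref{l:ort-vs-sat}, identify it with $\Sat(\cat)=\ort{\X}$ via balancedness, obtain closure under directed colimits from finitely presentable domains plus epicity, and conclude with Corollary~\ref{cor:Beth-TEM-omega-ort}. The differences are minor: you use Proposition~\ref{p:saturated-Beth-comp-lfp} instead of Proposition~\ref{p:localisation-is-X-ort}, whose equality clause needs \EAP{}, which follows from \TEM{} because $\cat$ has pushouts; and you write out the directed-colimit argument that the paper leaves as an adaptation of \cite[Proposition~1.35]{ar94book}.
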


\begin{proof}
If $\ar{S}$ is any class of epi-monos between objects of $\cat$, then $\ort{\X}\subseteq \ort{\ar{S}}$. By Proposition~\ref{p:localise-epis}, $\ort{\ar{S}}$ is a reflective subcategory of $\cat$; in fact, it is mono-reflective because every object of $\cat$ admits an extension with codomain in $\ort{\X}$ (hence, in~$\ort{\ar{S}}$) by Lemma~\ref{l:ort-vs-sat}.
If $\ort{\ar{S}}$ is balanced, then $\ort{\ar{S}}=\ort{\X}$ by Proposition~\ref{p:localisation-is-X-ort}. If, in addition, the domain of each arrow in $\ar{S}$ is finitely presentable, then $\ort{\ar{S}}$ is closed under directed colimits in $\cat$ (this follows from a straightforward adaptation of \cite[Proposition~1.35]{ar94book}, using the fact that each arrow in $\ar{S}$ is an epimorphism). 
Therefore, $\ort{\ar{S}}$ is the Beth companion of $\cat$ by Theorem~\ref{t:Beth-companion-amalgam-orthogonal} and Corollary~\ref{cor:Beth-TEM-omega-ort}.
\end{proof}

\section{Transfer of properties}\label{s:transfer-properties}

Throughout this section, $\cat$ denotes an lfp category that admits a Beth companion~$\B{\cat}$. We shall look at how the properties of $\cat$ carry over to~$\B{\cat}$. We start by recalling a categorical characterisation of finitary quasivarieties, which hinges on the existence of enough finitely presentable regular projective objects (such as free finitely generated algebras in a quasivariety). Recall that an object $X$ of a locally small category $\catt$ is \emph{regular projective} if the functor $\catt(X,-)\colon \catt\to\Set$ preserves regular epimorphisms. The following is \cite[Theorem~3.24]{ar94book}.

\begin{theorem}\label{t:characterisation-quasi-varieties}
A category is equivalent to a finitary (possibly multi-sorted) quasivariety if, and only if, it is cocomplete and admits a dense\footnote{For our purposes, a dense subcategory of a category $\catt$ can be defined as a full subcategory $\sf E$ of $\catt$ such that the restricted Yoneda embedding $\catt \to [\sf{E}^{\op},\Set]$ is fully faithful.} subcategory consisting of finitely presentable regular projectives. 
\end{theorem}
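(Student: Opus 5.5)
This statement is \cite[Theorem~3.24]{ar94book}, but here is the argument I would give. I treat the two directions separately. The harder direction is recognising an abstract category as a quasivariety. My strategy there is to embed the category into a many-sorted variety via the restricted Yoneda embedding, and then check the closure properties that characterise quasivarieties.

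\emph{Quasivariety $\Rightarrow$ conditions.} Let $\K$ be a finitary (many-sorted) quasivariety.
\begin{enumerate}
\item[(i)] $\K$ is reflective in $\Alg{\tau}$, since it is closed under products and subalgebras. As $\Alg{\tau}$ is cocomplete, so is $\K$.
\item[(ii)] Take $\sf E$ to be the full subcategory of free $\K$-algebras on finitely many generators of each sort. These are finitely presentable.
\item[(iii)] They are regular projective. Regular epis in $\K$ are exactly the surjective homomorphisms, and a map out of a free algebra lifts along any surjection by choosing preimages of the generators.
\item[(iv)] $\sf E$ is dense. Every algebra is canonically the colimit of the free finitely generated algebras mapping into it. Equivalently, a natural family of maps $\K(F,A)\to\K(F,B)$, for $F\in\sf E$, is determined by what it does on elements, i.e.\ on maps out of free algebras on one generator. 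Naturality with respect to the term operations, which are maps between objects of $\sf E$, forces it to be a homomorphism.
\end{enumerate}

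\emph{Conditions $\Rightarrow$ quasivariety.} Let $\cat$ be cocomplete with a dense subcategory $\sf E$ of finitely presentable regular projectives.
\begin{enumerate}
\item[(a)] Replace $\sf E$ by a small skeleton. Any set of representatives suffices once one notes that finitely presentable objects in a category with a dense family of them form an essentially small class.
\item[(b)] Close $\sf E$ under finite coproducts. Finite coproducts of finitely presentable regular projectives are again such, and the enlarged subcategory is still dense.
\item[(c)] Let $\mathsf V$ be the category of finite-product-preserving functors $\sf E^{\op}\to\Set$. This is a many-sorted finitary variety, with sorts the objects of $\sf E$.
\item[(d)] The restricted Yoneda functor $Y\colon\cat\to\mathsf V$, $C\mapsto\cat(-,C)|_{\sf E}$, lands in $\mathsf V$ and is fully faithful by density.
\end{enumerate}
It remains to show that the essential image of $Y$ is closed under products, subalgebras and directed colimits; together these give closure under ultraproducts, hence a quasivariety.
\begin{enumerate}
\item[(e)] \emph{Products.} Hom-functors preserve limits, so $Y$ preserves products.
\item[(f)] \emph{Directed colimits.} Each $E\in\sf E$ is finitely presentable, so $Y$ preserves directed colimits. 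Since $\cat$ is cocomplete, the colimit exists in $\cat$ and is the colimit in $\mathsf V$.
\end{enumerate}

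\emph{Subalgebras.} This is the step I expect to be the main obstacle. Let $A\subseteq Y(C)$ be a subalgebra.
\begin{enumerate}
\item[(1)] Write $A$ as a union of images of maps $Y(E_i)\to Y(C)$, one for each element of $A$. By full faithfulness these come from maps $E_i\to C$.
\item[(2)] Form the coproduct $\coprod_i E_i\to C$ in $\cat$. Take its (strong epi, mono) factorisation $\coprod_i E_i\to D\emb C$. Such factorisations exist if $\cat$ is cocomplete and well-copowered; if necessary, I would first show $\cat$ is lfp from the dense generator of finitely presentable objects.
\item[(3)] $Y$ preserves monos, since it is a right-adjoint-like hom-functor. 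So $Y(D)\to Y(C)$ is injective sortwise.
\item[(4)] The image of $Y(D)$ in $Y(C)$ contains $A$, because each element of $A$ factors through $D$.
\item[(5)] For the reverse inclusion I would use regular projectivity. Each $E\in\sf E$ lifts along the regular epi $\coprod E_i\to D$; here one needs strong epis to be regular, which I would check or else use the (regular epi, mono) factorisation directly. Hence every element $E\to D$ factors through $\coprod E_i$.
\item[(6)] It remains to see that every map $E\to\coprod E_i$ is already in the subalgebra generated by the $E_i\to C$. I expect this to use that $E$ is finitely presentable, so the map factors through a finite subcoproduct. That finite subcoproduct is an object of $\sf E$, because $\sf E$ was closed under finite coproducts in (b). Its elements are therefore obtained by term operations applied to elements of $A$.
\end{enumerate}
Hence $Y(D)\cong A$, which shows the essential image is closed under subalgebras and completes the argument.
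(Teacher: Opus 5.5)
The paper gives no proof of this statement. It is quoted from \cite[Theorem~3.24]{ar94book}, so there is no internal argument to compare against. Your reconstruction is the standard route and it works. You embed $\cat$ by the restricted Yoneda functor into the many-sorted variety $\mathsf{V}$ of finite-product-preserving functors ${\sf E}^{\op}\to\Set$, and then show the essential image is closed under products, directed colimits and subalgebras. The easy direction is fine. So is the heart of the subalgebra step, (6), where finite presentability of $E$ and closure of ${\sf E}$ under finite coproducts put every element of $Y(D)$ into $A$. Three points need tightening.

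\emph{Smallness in (a).} Your remark there is only true when the dense family is already a set. For a proper class it fails, and so does the theorem. Take the lattice consisting of a bottom, a top and a proper class of pairwise incomparable atoms.
\begin{itemize}
\item It is locally small and cocomplete.
\item Every object is finitely presentable, since every directed subset has a largest element.
\item Every object is regular projective, since regular epis in a poset are identities.
\item The whole category is dense in itself.
\end{itemize}
Yet it is not lfp, because its finitely presentable objects do not form an essentially small class. So smallness of ${\sf E}$ must be read into the hypothesis; it cannot be derived.

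\emph{Products in (e).} Step (e) tacitly uses products in $\cat$. They exist: with ${\sf E}$ small and $\cat$ cocomplete, the restricted Yoneda functor $\cat\to[{\sf E}^{\op},\Set]$ has a left adjoint, namely the left Kan extension of ${\sf E}\into\cat$ along the Yoneda embedding. Hence $\cat$ is reflective in a complete category (alternatively, $\cat$ is lfp).

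\emph{The deferred claim in (5).} This is exactly where regular projectivity is needed, and it is not automatic. In a general lfp category strong epis need not be regular. For example, in $\Cat$, the functor sending two disjoint arrows onto the walking isomorphism and its inverse is a strong epi. It is not regular, because the coequaliser of its kernel pair is the free category on a $2$-cycle.

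The cleanest fix is to construct the (regular epi, mono) factorisation directly. For $f\colon X\to Z$, let $k_{1},k_{2}\colon K\rightrightarrows X$ be its kernel pair, $q\colon X\to Q$ their coequaliser, and $m\colon Q\to Z$ the induced arrow. Suppose $u,v\colon E\to Q$, with $E$ in ${\sf E}$, satisfy $m\circ u=m\circ v$.
\begin{itemize}
\item Regular projectivity gives $u',v'\colon E\to X$ with $u=q\circ u'$ and $v=q\circ v'$.
\item Then $f\circ u'=f\circ v'$, so $u'=k_{1}\circ w$ and $v'=k_{2}\circ w$ for some $w\colon E\to K$.
\item Hence $u=q\circ k_{1}\circ w=q\circ k_{2}\circ w=v$.
\end{itemize}
As ${\sf E}$ is a generator, $m$ is monic. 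Applied to $\coprod_{i}E_{i}\to C$, this gives (2) and (5) at once, with no need to compare strong and regular epis.
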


\begin{remark}
To characterise mono-sorted quasivarieties, it suffices to require that the dense subcategory in the above statement consists of a single finitely presentable regular projective object. 
\end{remark}

\begin{theorem}\label{t:transfer-quasi-variety}
Suppose $\cat$ satisfies \EAP, and there is a class $\ar{S}$ of epi-monos in $\cat$ with finitely presentable regular projective domains such that $\ort{\ar{S}}$ is balanced. If~$\cat$ is equivalent to a (mono-sorted) finitary (quasi)variety, then so is its Beth companion.
\end{theorem}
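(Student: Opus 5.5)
First I would identify the Beth companion. Assume $\cat$ is equivalent to a finitary quasivariety; in particular $\cat$ is lfp. I would check that $\cat$ has \TEM, so that Corollary~\ref{cor:S-perp-Beth-comp} gives $\B{\cat}=\ort{\ar{S}}$. The standing assumption of this section is that $\B{\cat}$ exists, so Theorem~\ref{t:Beth-companion-amalgam-orthogonal} (which needs \EAP) shows $\B{\cat}=\ort{\X}$, and Remark~\ref{rmk:EAP-vs-TEM} then upgrades \EAP\ to \TEM. Proposition~\ref{p:localisation-is-X-ort} gives $\ort{\X}\subseteq\ort{\ar{S}}$. Arguing as in the proof of Corollary~\ref{cor:S-perp-Beth-comp}, $\ort{\ar{S}}$ is mono-reflective and balanced, so Proposition~\ref{p:localisation-is-X-ort} yields $\ort{\ar{S}}=\ort{\X}=\B{\cat}$. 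Write $L\dashv R$ for this reflection.

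Next I would apply Theorem~\ref{t:characterisation-quasi-varieties}. Let $\sf E$ be a dense subcategory of $\cat$ consisting of finitely presentable regular projectives (a single one in the mono-sorted case). The candidate for $\B{\cat}$ is the full subcategory on $\{L(P): P\in\sf E\}$. These objects are finitely presentable, because $R$ preserves directed colimits and so $L$ preserves finite presentability. They are dense, because $\B{\cat}(L(P),X)\cong\cat(P,RX)$ naturally, and density of $\sf E$ in $\cat$ together with full faithfulness of $R$ gives density of $L[\sf E]$ in $\B{\cat}$. Cocompleteness of $\B{\cat}$ is automatic.

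The main obstacle is regular projectivity of $L(P)$ in $\B{\cat}$. Via the adjunction this amounts to showing that $R$ sends regular epis of $\B{\cat}$ to regular epis of $\cat$ (in a quasivariety, to surjections). This is where the hypothesis on $\ar{S}$ enters. I would proceed as follows.

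\begin{enumerate}
\item Let $e\colon X\to Y$ be a regular epi in $\B{\cat}$ and factor $R(e)=m\circ q$ in $\cat$ with $q$ a strong epi and $m\colon Z\emb RY$ a mono.
\item Since $R$ preserves epis (Lemma~\ref{l:basic-props-mono-reflections}), $m$ is an epi-mono.
\item Show that $Z$ lies in $\ort{\ar{S}}$. Given $s\colon A\to B$ in $\ar{S}$ and $g\colon A\to Z$, the map $m\circ g$ extends uniquely along $s$ to some $B\to RY$, since $RY\in\ort{\ar{S}}$.
\item Lift $g$ along $q$ using projectivity of $A$; this is where I need $A$ regular projective and $q$ regular, which holds in a quasivariety. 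Extend the lift along $s$ into $RX$, then compose with $q$. Because $s$ is epi and $m$ is mono, this composite is the required extension into $Z$.
\end{enumerate}

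So $Z\in\B{\cat}$, and $m$ is an epi-mono between objects of $\B{\cat}$. By balancedness $m$ is an isomorphism, hence $R(e)=m\circ q$ is a regular epi.

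For the variety case I would additionally need $\B{\cat}$ to have effective equivalence relations. Kernel pairs are computed in $\cat$ since $R$ preserves limits. Regular epis are preserved by $R$, as just shown. Quotients by congruences in $\cat$ of objects of $\B{\cat}$ stay in $\B{\cat}$ by the same argument as step~3. So exactness transfers, and I would conclude using the corresponding variety characterisation from \cite{ar94book}.
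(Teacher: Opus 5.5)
Your proposal is correct and follows essentially the same route as the paper. You identify $\B{\cat}$ with $\ort{\ar{S}}$ via \TEM\ and Corollary~\ref{cor:S-perp-Beth-comp}, transport the dense family of finitely presentable regular projectives along $L$, and prove that $R$ preserves regular epis by showing that regular images of objects of $\ort{\ar{S}}$ remain in $\ort{\ar{S}}$ (using projectivity of the domains in $\ar{S}$) and then invoking balancedness. The only variation is in the variety case: you note that the $\cat$-coequaliser of $(R(r_{0}),R(r_{1}))$ already lies in $\ort{\ar{S}}$ by that same closure argument, whereas the paper uses that the unit component $\eta_{A}$ is monic; both work.
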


\begin{proof}
Let $\ar{S}$ be as in the statement. Upon recalling that $\cat$ satisfies \TEM~(see Remark~\ref{rmk:EAP-vs-TEM}), it follows from Corollary~\ref{cor:S-perp-Beth-comp} that $\ort{\ar{S}}$ is the Beth companion of~$\cat$. Let $R\colon \ort{\ar{S}}\into \cat$ denote the inclusion functor, and $L$ its left adjoint. 

By Theorem~\ref{t:characterisation-quasi-varieties}, it suffices to show that if $\catt$ is a dense subcategory of $\cat$ consisting of finitely presentable regular projectives, then the full subcategory $\catt_{L}$ of~$\ort{\ar{S}}$ formed by the objects of the form $L(X)$, for $X$ in~$\catt$, is a dense subcategory consisting of finitely presentable regular projectives.

To show that $\catt_{L}$ is dense in $\ort{\ar{S}}$, we must prove that the restricted Yoneda embedding $\ort{\ar{S}} \to [\catt_{L}^{\op},\Set]$ is fully faithful. That is, for every $Y_{1},Y_{2}$ in $\ort{\ar{S}}$ the canonical map
\[
\ort{\ar{S}}(Y_{1},Y_{2}) \to \mathrm{Nat}(\ort{\ar{S}}(-,Y_{1})_{\mid \catt_{L}}, \ort{\ar{S}}(-,Y_{2})_{\mid \catt_{L}})
\]
is a bijection. Let $\tau$ be a natural transformation in the rightmost set above. The component of $\tau$ at any object $L(X)$ of $\catt_{L}$ is 
\[
\tau_{L(X)}\colon \ort{\ar{S}}(L(X),Y_{1})\to \ort{\ar{S}}(L(X),Y_{2}),
\]
which, in view of the adjunction $L\dashv R$, yields a map
\[
\cat(X, R(Y_{1}))\xrightarrow{\cong} \ort{\ar{S}}(L(X),Y_{1}) \xrightarrow{\tau_{L(X)}} \ort{\ar{S}}(L(X),Y_{2}) \xrightarrow{\cong} \cat(X, R(Y_{2})).
\]
Let us denote the latter composite by $\sigma_{X}$. The naturality of $\tau$ entails that the maps~$\sigma_{X}$ are the components of a natural transformation 
\[
\sigma\colon \cat(-,R(Y_{1}))_{\mid \catt}\to \cat(-,R(Y_{2}))_{\mid \catt}.
\]
The restricted Yoneda embedding $\cat \to [\catt^{\op},\Set]$ is fully faithful because $\catt$ is dense in $\cat$. Hence, the natural transformation $\sigma$ is induced by a unique arrow $f\colon R(Y_{1})\to R(Y_{2})$. Since $R$ is the inclusion of a full subcategory, the arrow $f\colon Y_{1}\to Y_{2}$ lies in $\ort{\ar{S}}$. Tracing $f$ through the adjunction, we see that it is the unique arrow inducing the natural transformation $\tau$. Therefore, $\catt_{L}$ is dense in $\ort{\ar{S}}$.

Now, the reflector $L$ preserves finitely presentable objects because $R$ preserves directed colimits; hence, each object in $\catt_{L}$ is finitely presentable. To show that each object in $\catt_{L}$ is regular projective, we start by proving the following fact.

\begin{claim}
The inclusion $R\colon \ort{\ar{S}} \into \cat$ preserves regular epimorphisms.
\end{claim}

\begin{proof}[Proof of Claim]
We first show that $\ort{\ar{S}}$ is closed in $\cat$ under regular images. Let $q\colon A\to B$ be a regular epimorphism in $\cat$ with $A\in \ort{\ar{S}}$. We must show that $B\in \ort{\ar{S}}$, i.e.\ $B$ is orthogonal to each arrow $f\colon X \to Y$ in $\ar{S}$. If $g\colon X\to B$ is an arrow in $\cat$, since $X$ is  regular projective there is $h\colon X\to A$ such that $g = q\circ h$.
\[\begin{tikzcd}
{} & A \arrow{d}{q} \\
X \arrow{r}{g} \arrow{d}[swap]{f} \arrow[dashed]{ur}{h} & B \\
Y \arrow[dashed]{ur}[swap]{\ell} & {}
\end{tikzcd}\]
Because $A\in \ort{\ar{S}}$, there is $k\colon Y\to A$ such that $h = k\circ f$. Then $\ell\coloneqq q\circ k$ satisfies
\[
\ell \circ f = q\circ k \circ f = q\circ h = g,
\]
showing that $B\in \ort{\ar{S}}$.

Next, we show that $R$ sends epimorphisms to regular epimorphisms, thus settling the claim. Let $e\colon X\to Y$ be an epimorphism in $\ort{\ar{S}}$. Because every object of $\cat$ admits a monomorphism to an object of $\ort{\ar{S}}$, $R(e)$ is an epimorphism in $\cat$. Since $\cat$ is equivalent to a quasivariety, it admits a (regular epi, mono) factorisation system. Consider a decomposition of $R(e)$ as a regular epimorphism $q\colon R(X)\to A$ followed by a monomorphism $m\colon A\to R(Y)$. The object $A$ belongs to $\ort{\ar{S}}$ because the latter is closed in $\cat$ under regular images, and $m$ is an epi-mono because $R(e)$ is epi. Each object of $\ort{\ar{S}}$ is saturated (e.g.\ by Proposition~\ref{p:saturated-Beth-comp-lfp}), therefore $m$ is an isomorphism. We conclude that $R(e)$ is a regular epimorphism.
\end{proof}

If $X$ is any object of $\cat$, we have a natural isomorphism $\ort{\ar{S}}(L(X),-)\cong \cat(X,R(-))$. Since $R$ preserves regular epimorphisms by the previous claim, if $X$ is regular projective (i.e., if $\cat(X,-)\colon \cat\to\Set$ preserves regular epimorphisms) then so is $LX$. Thus, $\catt_{L}$ consists of regular projective objects.

This shows that $\ort{\ar{S}}$ is equivalent to a finitary quasivariety. Note that if $\cat$ is equivalent to a mono-sorted finitary quasivariety, then we can choose the dense subcategory $\catt$ to consist of a single object. The same proof as above then shows that $\ort{\ar{S}}$ is equivalent to a mono-sorted finitary quasivariety.

It remains to show that if $\cat$ is a variety, then so is $\ort{\ar{S}}$. Recall that a quasivariety is a variety if, and only if, it has effective equivalence relations (cf.\ e.g.\ \cite[Corollary~3.25]{ar94book}). Therefore, we must prove that if equivalence relations in $\cat$ are effective, the same holds in~$\ort{\ar{S}}$. Let $(r_{0},r_{1})\colon E\rightrightarrows X$ be an equivalence relation in~$\ort{\ar{S}}$. Because $R\colon \ort{\ar{S}}\into \cat$ preserves limits, $(R(r_{0}),R(r_{1}))$ is an equivalence relation in~$\cat$. As equivalence relations in the latter category are effective, $(R(r_{0}),R(r_{1}))$ is the kernel pair of its coequaliser $q\colon R(X)\to A$. The coequaliser of $(r_{0},r_{1})$ in $\ort{\ar{S}}$ is the unique arrow $\tilde{q}\colon X\to L(A)$ such that $R(\tilde{q}) = \eta_{A}\circ q$, where $\eta_{A}\colon A \to RL(A)$ is the component at $A$ of the unit of the adjunction $L\dashv R$. Because $\eta_{A}$ is monic, the kernel pair of $\eta_{A}\circ q$ in $\cat$ coincides with the kernel pair of $q$, hence with $(R(r_{0}),R(r_{1}))$. Since limits in $\ort{\ar{S}}$ are computed in $\cat$, it follows that the kernel pair of $\tilde{q}$ is $(r_{0},r_{1})$. We conclude that the latter equivalence relation is effective.
\end{proof}

Next, we look at how the amalgamation property transfers from $\cat$ to $\B{\cat}$.

\begin{proposition}\label{p:transfer-AP}
Suppose that $\cat$ satisfies the amalgamation property. Then $\B{\cat}$ has the strong amalgamation property.
\end{proposition}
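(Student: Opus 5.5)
Since $\B{\cat}$ is balanced and lfp, it satisfies \ES, and by Proposition~\ref{l:IPA-iff-SES} together with the remark after it (\SAP${}\IFF{}$\AP${}+{}$\IPA, and in an lfp category with \AP\ the properties \ES, \SES, \IPA, \SAP\ coincide), it suffices to show that $\B{\cat}$ has \AP. Once \AP\ is established, Tholen's result applies to the lfp category $\B{\cat}$ and upgrades \ES\ to \SAP.

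To prove \AP\ in $\B{\cat}$, write $R\colon \B{\cat}\into\cat$ for the inclusion, $L$ for the reflector and $\eta$ for the unit, whose components are epi-monos. Take a span of monomorphisms $\alpha\colon A\to B$ and $\beta\colon A\to C$ in $\B{\cat}$. Since $R$ is a right adjoint, $R(\alpha)$ and $R(\beta)$ are monic in $\cat$. By \AP\ in $\cat$ there are monomorphisms $\delta\colon B\to D$ and $\gamma\colon C\to D$ in $\cat$ with $\delta\circ\alpha=\gamma\circ\beta$. Composing with $\eta_D\colon D\emb RL(D)$ gives monomorphisms $\eta_D\circ\delta$ and $\eta_D\circ\gamma$ in $\cat$ into an object of $\B{\cat}$. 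Because $\B{\cat}$ is full, these are arrows of $\B{\cat}$. They are monic there because $R$ is faithful, so a mono in $\cat$ between objects of $\B{\cat}$ is a mono in $\B{\cat}$. The square still commutes, which gives \AP\ in $\B{\cat}$.

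The only real obstacle is the appeal to \cite{Tholen1982}, namely that \AP\ implies extremal monos are regular in an lfp category; this needs $\B{\cat}$ to be lfp, which holds by definition of Beth companion. Concretely, \ES\ in $\B{\cat}$ means every mono is extremal, hence regular, so \SES\ holds. Then \IPA\ holds by Proposition~\ref{l:IPA-iff-SES}, since $\B{\cat}$ is cocomplete and so has cokernel pairs. Finally \SAP${}={}$\AP${}+{}$\IPA.
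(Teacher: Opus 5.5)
Your proof is correct and has the same overall structure as the paper's. First you transfer \AP\ from $\cat$ to $\B{\cat}$. Then you use Tholen's result that \ES\ and \SES\ coincide in an lfp category with \AP. Finally you get \IPA, and hence \SAP, from Proposition~\ref{l:IPA-iff-SES} together with $\SAP\IFF\AP+\IPA$.

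The one step where you argue differently is the transfer of \AP.
\begin{itemize}
\item The paper invokes \cite[Lemma~1]{Ringel1971} to show that the reflector $L\colon\cat\to\B{\cat}$ preserves monomorphisms. It then obtains amalgams (pushouts) in $\B{\cat}$ by reflecting those in $\cat$.
\item You instead post-compose the amalgamating monos $\delta,\gamma$ obtained in $\cat$ with the monic unit component $\eta_{D}\colon D\to RL(D)$. This lands in an object of $\B{\cat}$, and a mono of $\cat$ between objects of a full subcategory is monic there.
\end{itemize}
Your version is more elementary. It needs neither Ringel's lemma nor pushouts, only that the unit components are monic and that $R$ preserves monos. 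The paper's version gives the additional fact that $L$ preserves monos, which it reuses in Remark~\ref{r:TM} to transfer \TM. Your post-composition trick would transfer \TM\ just as easily: complete the span in $\cat$ using \TM\ and compose with $\eta_{D}$.
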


\begin{proof}
Since $\B{\cat}$ is a reflective subcategory of $\cat$, pushouts in the former category are computed by first taking the pushout in $\cat$ and then applying the reflector. 
In view of \cite[Lemma~1]{Ringel1971}, given any full and replete mono-reflective subcategory of an lfp category that satisfies the amalgamation property, the reflector preserves monomorphisms. Hence, the reflector $\cat\to\B{\cat}$ preserves monos. The inclusion functor $\B{\cat}\into\cat$ preserves monos too, because it is right adjoint. Thus, since $\cat$ satisfies \AP, so does $\B{\cat}$. As \ES~and \SES~are equivalent in the presence of \AP, it follows that $\B{\cat}$ satisfies \SES, and thus also \SAP~(cf.\ Proposition~\ref{l:IPA-iff-SES}).
\end{proof}

We conclude this section with two remarks concerning the properties of $\B{\cat}$ when $\cat$ satisfies the amalgamation property or variations thereof.

\begin{remark}\label{r:TM}
Recall that a category $\catt$ has
\begin{enumerate}
\item[(TM)]\label{TM} \emph{transferable monomorphisms} if any span $\alpha,\beta$, with $\alpha$ monic, can be completed to a commutative square as in eq.~\eqref{eq:generic-amalgam} with $\gamma$ monic.
\end{enumerate}
If $\catt$ admits pushouts of monomorphisms along any morphism, then it has TM if, and only if, monomorphisms are stable under pushouts along any morphism. As in the proof of Proposition~\ref{p:transfer-AP}, an application of \cite[Lemma~1]{Ringel1971} shows that if $\cat$ satisfies \TM~(and, a fortiori, \AP), then so does its Beth companion.
\end{remark}

\begin{remark}
Let $\DL$ be the category of bounded distributive lattices and bounded lattice homomorphisms, and $\BA$ the full subcategory of $\DL$ consisting of Boolean algebras. The inclusion $\BA\into \DL$ has a left adjoint, which assigns to a distributive lattice its \emph{Boolean envelope}. A remarkable property of this adjunction is that, if we embed a distributive lattice $A$ into any Boolean algebra $B$, the Boolean subalgebra $\langle A\rangle$ of $B$ generated by (the image of) $A$ is isomorphic to the Booleanisation of $A$. Note that the inclusion $A\into \langle A\rangle$ is an epi-mono in $\DL$. 

Let us define the latter property for any reflective subcategory $R\colon \sf{E}\into \catt$, with left adjoint $L$ and unit $\eta$. We shall say that such an adjunction has the \emph{free extension property} if, given any epi-mono $f\colon A\to R(X)$ in $\catt$, the unique arrow $g\colon L(A)\to X$ such that the following diagram commutes is an isomorphism.
\[\begin{tikzcd}
A \arrow{r}{\eta_{A}} \arrow{dr}[swap]{f} & RL(A) \arrow{d}{R(g)} \\
{} & R(X)
\end{tikzcd}\]

It is a simple observation that, if $\cat$ has the amalgamation property, the mono-reflective subcategory $\B{\cat}\into\cat$ has the free extension property. Just observe that, in any category satisfying \AP, epi-monos are \emph{essential monos} (recall that a mono~$m$ is an essential mono if any composite $h\circ m$ is monic just when $h$ is monic); for a proof, see \cite[\S 2]{Ringel1971}. It follows that the components $\eta_{A}$ of the unit are essential monos, and so $R(g)$ is epi-monic whenever $f$ is epi-monic. As $R$ reflects epis and monos because it is faithful, $g$ is an epi-mono in $\B{\cat}$, hence an isomorphism.

We recover the free extension property for Boolean envelopes because $\DL$ satisfies the amalgamation property, and $\BA$ is the Beth companion of $\DL$ (cf.\ Section~\ref{s:examples}).
\end{remark}

\section{A syntactic perspective}\label{s:syntactic-viewpoint}

Under Gabriel--Ulmer duality for lfp categories, to any lfp category $\cat$, one can associate an essentially unique theory whose class of models is equivalent to $\cat$ (more details will be provided below). If $\cat$ admits a Beth companion $\B{\cat}$, it is natural to wonder what the relationship is between the theory of $\cat$ and that of $\B{\cat}$.

Let $\X$ be the class of epi-monos in $\cat$, and write $\mathbb{T}_{\X}$ for the idempotent monad on~$\cat$ induced by the reflective subcategory $\ort{\X}\into \cat$. Recall that a monad is \emph{strongly finitary} if it preserves finitely presentable objects. The main result of this section shows that, if $\cat$ satisfies \EAP~and $\mathbb{T}_{\X}$ is strongly finitary, then $\cat$ \emph{defines its Beth companion}, in the sense that $\B{\cat}$ can be axiomatised by a specific type of quotient of the theory of $\cat$ (Theorem~\ref{t:syntactic-Beth-companion}). 
Note that if $\cat$ satisfies \EAP, then $\mathbb{T}_{\X}$ is strongly finitary if, and only if, the unique (up to isomorphism) saturated epi-extension of any finitely presentable object of $\cat$ is also finitely presentable (cf.\ Remark~\ref{rmk:EAP-unique-saturated-epi-ext}).

\begin{remark}
The requirement for a monad to be strongly finitary is fairly restrictive. For instance, a monad on $\Set$ is strongly finitary if, and only if, its associated category of algebras is a locally finite variety in the sense of universal algebra, meaning that any free algebra on a finite set is finite. 
Nevertheless, the monad $\mathbb{T}_\mathcal{X}$ is strongly finitary in several cases, including those of bounded distributive lattices and cancellative Abelian monoids (cf.\ Section~\ref{s:examples}).
\end{remark}

Let us begin with the following observation, which describes when the Beth companion exists, provided that $\cat$ satisfies \TEM~and $\mathbb{T}_\mathcal{X}$ is strongly finitary, and establishes a link with $\omega$-orthogonality classes.

\begin{theorem} \label{thm:orthinverter}
Let $\cat$ be an lfp category satisfying \TEM, and suppose the monad~$\mathbb{T}_{\X}$ is strongly finitary. 
The following statements are equivalent:
\begin{enumerate}[label=(\arabic*)]
\item\label{i:TEM-strongly-fin-has-BC} $\cat$ admits a Beth companion;
\item\label{i:TEM-set-of-fp-epimonos} there exists a set $\ar{H}$ of epi-monos between finitely presentable objects of $\cat$ such that $\ort{\ar{H}}=\ort{\X}$;
\item\label{i:AP-Sat-ort-class} $\ort{\X}$ is an $\omega$-orthogonality class in $\cat$. 
\end{enumerate}
\end{theorem}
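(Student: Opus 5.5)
We will prove the cycle \ref{i:TEM-strongly-fin-has-BC} $\IMP$ \ref{i:TEM-set-of-fp-epimonos} $\IMP$ \ref{i:AP-Sat-ort-class} $\IMP$ \ref{i:TEM-strongly-fin-has-BC}. Throughout we use Lemma~\ref{l:ort-vs-sat}: since $\cat$ satisfies \TEM, we have $\Sat(\cat)=\ort{\X}$, and $\ort{\X}$ is mono-reflective. We write $\eta$ for the unit of the reflection $\cat\to\ort{\X}$; its components are epi-monos.

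\ref{i:AP-Sat-ort-class} $\IMP$ \ref{i:TEM-strongly-fin-has-BC} is quick. By Theorem~\ref{orthog-reflective}, an $\omega$-orthogonality class is closed under directed colimits in $\cat$. Hence $\Sat(\cat)=\ort{\X}$ is closed under directed colimits, and Corollary~\ref{cor:Beth-TEM-omega-ort} yields the Beth companion.

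\ref{i:TEM-set-of-fp-epimonos} $\IMP$ \ref{i:AP-Sat-ort-class} is immediate, since $\ar{H}$ is a set of arrows between finitely presentable objects.

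The substantive step is \ref{i:TEM-strongly-fin-has-BC} $\IMP$ \ref{i:TEM-set-of-fp-epimonos}. Let $\ar{H}$ be a set of representatives, up to isomorphism, of the unit components $\eta_B\colon B\to RL(B)$ with $B$ finitely presentable. This is a set because $\cat$ has, up to isomorphism, only a set of finitely presentable objects. Each $\eta_B$ is an epi-mono, and its codomain is finitely presentable because $\mathbb{T}_{\X}$ is strongly finitary. Since $\ar{H}\subseteq\X$, we get $\ort{\X}\subseteq\ort{\ar{H}}$.

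For the converse inclusion we argue as in the proof of Lemma~\ref{l:X-ort-dir-colim-fp-domains}. The existence of a Beth companion gives, via Corollary~\ref{cor:Beth-TEM-omega-ort}, that $\ort{\X}$ is closed under directed colimits, so $R$ preserves directed colimits. Hence, for any object $A$, $\eta_A$ is the directed colimit in the arrow category of the arrows $\eta_B$, for $B$ in the canonical diagram of $A$. The reflector $\cat\to\ort{\ar{H}}$ (which exists by Proposition~\ref{p:localise-epis}) inverts each $\eta_B$. The class of arrows it inverts is closed under colimits in the arrow category, since the reflector is a left adjoint; so it inverts every $\eta_A$. Every object $Y\in\ort{\ar{H}}$ is therefore orthogonal to $\eta_Y$, which forces $\eta_Y$ to be split monic; as $\eta_Y$ is also epic, it is an isomorphism. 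Thus $Y\in\ort{\X}$ by repleteness.

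The main point requiring care is this identification of $\eta_A$ as a colimit of the $\eta_B$, which depends on closure of $\ort{\X}$ under directed colimits. That closure is exactly what the Beth companion hypothesis provides, so no further obstacle is expected.
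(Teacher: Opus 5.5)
Your proposal is correct and follows essentially the paper's route. For (1)$\IMP$(2) you take $\ar{H}$ to be the unit components at finitely presentable objects and rerun the argument of Lemma~\ref{l:X-ort-dir-colim-fp-domains}; (2)$\IMP$(3) is trivial. The only cosmetic difference is in (3)$\IMP$(1): you invoke closure of $\omega$-orthogonality classes under directed colimits (Theorem~\ref{orthog-reflective}) and item (2) of Corollary~\ref{cor:Beth-TEM-omega-ort}, whereas the paper notes that $\omega$-orthogonality classes are $\omega$-injectivity classes and uses item (3) of that corollary.
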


\begin{proof}
 Note that for the implications \ref{i:TEM-strongly-fin-has-BC} $\IMP$ \ref{i:TEM-set-of-fp-epimonos} $\IMP$ \ref{i:AP-Sat-ort-class} it would suffice to assume that $\cat$ satisfies \EAP, instead of \TEM~(see Remark~\ref{rmk:EAP-vs-TEM}).
 
\ref{i:TEM-strongly-fin-has-BC} $\IMP$ \ref{i:TEM-set-of-fp-epimonos}. In view of Theorem~\ref{t:Beth-companion-amalgam-orthogonal} and Lemma~\ref{l:X-ort-dir-colim-fp-domains}, $\ort{\X}=\X^{\perp}_{\mathrm{fpd}}$, where $\X_{\mathrm{fpd}}$ is the class of epi-monos in $\cat$ with finitely presentable domains. In fact, the proof of the latter lemma shows that $\ort{\X}=\ort{\ar{H}}$, where $\ar{H}$ consists of the components, at finitely presentable objects, of the unit $\eta$ of the adjunction formed by the inclusion $\ort{\X}\into \cat$ and its left adjoint. Since the full subcategory of $\cat$ consisting of the finitely presentable objects is essentially small, we can assume that $\ar{H}$ is a set. Because $\mathbb{T}_{\X}$ is strongly finitary, $\ar{H}$ consists of epi-monos between finitely presentable objects.

\ref{i:TEM-set-of-fp-epimonos} $\IMP$ \ref{i:AP-Sat-ort-class}. Clear.

\ref{i:AP-Sat-ort-class} $\IMP$ \ref{i:TEM-strongly-fin-has-BC}. Since every $\omega$-orthogonality class in $\cat$ is also an $\omega$-injectivity class, Corollary~\ref{cor:Beth-TEM-omega-ort} implies that $\cat$ admits a Beth companion.
\end{proof}

Next, we shall review the basic elements of Gabriel--Ulmer duality. For the rest of this section, we will assume that the reader is familiar with the basic concepts of $2$-category theory. See e.g.\ \cite{Lack2010}.

\begin{definition}
We define the following $2$-categories:
\begin{itemize}
\item $\Lfp$ has lfp categories as objects, lfp morphisms (see Section~\ref{s:lfp}) as arrows, and natural transformations between lfp morphisms as $2$-cells.
\item $\Lex$ has small categories with finite limits as objects, functors that preserve finite limits as arrows, and natural transformations between such functors as $2$-cells.
\end{itemize}
\end{definition}

Gabriel--Ulmer duality establishes a dual (bi)equivalence between these $2$-categories:

\begin{theorem}[Gabriel--Ulmer duality]\label{t:GU-duality}
There exists a biequivalence of $2$-categories  
\[\Lex(-,\Set)\colon \Lex^{\op} \leftrightarrows \Lfp \cocolon\Lfp(-,\Set).\]
\end{theorem}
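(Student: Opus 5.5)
The biequivalence will be realised by two explicit pseudo-inverse $2$-functors. In one direction, a small finitely complete category $\mathcal{T}$ is sent to $\Lex(\mathcal{T},\Set)$, its category of models. In the other, an lfp category $\cat$ is sent to $\Lfp(\cat,\Set)$, the category of lfp morphisms $\cat\to\Set$.

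\emph{Step 1: $\Lex(\mathcal{T},\Set)$ is lfp.} It is the full subcategory of $[\mathcal{T},\Set]$ on functors preserving finite limits. This subcategory is closed under limits. Since finite limits commute with filtered colimits in $\Set$, it is also closed under directed colimits. Therefore it is lfp by the closure result \cite[Corollary~2.48]{ar94book} already used in Theorem~\ref{t:Beth-companion-unique}. Alternatively, one checks directly that the representables $\mathcal{T}(t,-)$ are finitely presentable, by Yoneda. Every model is a filtered colimit of representables, because its category of elements is cofiltered. Precomposition with a finite-limit-preserving functor $F\colon\mathcal{T}\to\mathcal{T}'$ preserves limits and directed colimits, since both are computed pointwise. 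So we obtain a $2$-functor $\Lex^{\op}\to\Lfp$.

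\emph{Step 2: $\Lfp(\cat,\Set)$ is small and finitely complete.} An lfp morphism $G\colon\cat\to\Set$ has a left adjoint preserving finitely presentable objects \cite[Theorem~1.66]{ar94book}. Hence $G\cong\cat(A,-)$ with $A=L(1)$ finitely presentable. This gives an equivalence $\Lfp(\cat,\Set)\simeq(\fp{\cat})^{\op}$, which is essentially small. It has finite limits because $\fp{\cat}$ is closed under finite colimits in $\cat$. An lfp morphism $\cat\to\catt$ acts by precomposition; via the left adjoint, this corresponds to a functor $\fp{\catt}\to\fp{\cat}$ preserving finite colimits. So it is a morphism in $\Lex$ after dualising, and we obtain the $2$-functor $\Lfp\to\Lex^{\op}$.

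\emph{Step 3: the unit and counit.} For the composite $\mathcal{T}\mapsto\Lfp(\Lex(\mathcal{T},\Set),\Set)$, Step 2 reduces this to $(\fp{\Lex(\mathcal{T},\Set)})^{\op}$. So we must show that every finitely presentable model is a retract of a representable, hence representable, since $\mathcal{T}$ has finite limits and is thus idempotent-complete. This gives $\mathcal{T}\simeq$ the double dual via Yoneda. For the other composite $\cat\mapsto\Lex(\Lfp(\cat,\Set),\Set)\simeq\Lex((\fp{\cat})^{\op},\Set)$, we use the standard fact that an lfp category is equivalent to $\Lex((\fp{\cat})^{\op},\Set)$ via $X\mapsto\cat(-,X)|_{\fp{\cat}}$ \cite[Theorem~1.46]{ar94book}.

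\emph{Step 4: the hom-categories.} It remains to check that these equivalences are pseudonatural and locally equivalences on hom-categories. This amounts to the statement that lfp morphisms $\cat\to\catt$ correspond to finite-colimit-preserving functors $\fp{\catt}\to\fp{\cat}$, obtained by restricting left adjoints, together with natural transformations between them. The correspondence follows because finite-colimit-preserving functors between the $\fp$ parts extend uniquely, up to isomorphism, to finitary left adjoints.

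I expect the main obstacle to be the coherence and pseudonaturality bookkeeping in Step 4, rather than any single object-level computation. The substantive point is representing lfp morphisms into $\Set$ by finitely presentable objects (Step 2), and that follows from \cite[Theorem~1.66]{ar94book}.
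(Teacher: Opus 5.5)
The paper does not prove this theorem. It quotes Gabriel--Ulmer duality as a classical result, citing \cite{GabrielUlmer1971,ar94book,MakkaiPitts1987,AP1998}, so there is no in-paper argument to compare against. Your outline is the standard proof from those sources, and it is correct. Your Step~2 is exactly the Makkai--Pitts observation that the paper records as Proposition~\ref{prop:twofaces}. Step~3 combines it with $\cat\simeq\Lex((\fp{\cat})^{\op},\Set)$ and the splitting of idempotents in finitely complete categories. Step~4 is the usual correspondence between lfp morphisms $\cat\to\catt$ and finite-colimit-preserving functors $\fp{\catt}\to\fp{\cat}$, obtained by restricting left adjoints and, conversely, by left Kan extension along $\fp{\catt}\into\catt$.

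The coherence bookkeeping you expect to be the main obstacle can be avoided entirely. Take the unit and counit to be the evaluation functors $t\mapsto\mathrm{ev}_{t}$ and $X\mapsto\mathrm{ev}_{X}$. These are strictly $2$-natural: for instance $\mathrm{ev}_{X}\circ H^{*}=\mathrm{ev}_{H(X)}$ for an lfp morphism $H$, and similarly $\mathrm{ev}_{t'}\circ F^{*}=\mathrm{ev}_{F(t')}$, and the same holds on $2$-cells. Your Steps~2--3 already show that their components are equivalences, which by definition gives a biequivalence, so Step~4 is not needed. Alternatively, a $2$-functor that is locally an equivalence and essentially surjective on objects up to equivalence is a biequivalence. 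Step~4 together with the first half of Step~3 gives exactly this for $\Lfp(-,\Set)$.

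Two minor points of hygiene:
\begin{itemize}
\item The ``direct'' alternative in Step~1 does not address cocompleteness. The cleanest fix is to note that $\Lex(\mathcal{T},\Set)$ is the $\omega$-orthogonality class in $[\mathcal{T},\Set]$ of the maps $\colim_{i}\mathcal{T}(t_{i},-)\to\mathcal{T}(\lim_{i}t_{i},-)$, with $(t_{i})$ ranging over finite diagrams in $\mathcal{T}$. Theorem~\ref{orthog-reflective} then gives reflectivity and local finite presentability at once.
\item $\Lfp(\cat,\Set)$ is only essentially small. So $\Lfp(-,\Set)$ lands in $\Lex$ only after choosing small replacements, which is tacit in the statement itself.
\end{itemize}
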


This duality was first established by Gabriel and Ulmer~\cite{GabrielUlmer1971}. Later expositions include~\cite{ar94book}. For a presentation that emphasises the $2$-dimensional aspects, we refer interested readers to~\cite{MakkaiPitts1987,AP1998}; see also \cite{LackPower2009, Tendas2025}.
Historically, the object of $\Lex$ corresponding to an lfp category $\cat$ was presented as the opposite of the full subcategory $\fp{\cat}$ of $\cat$ consisting of the finitely presentable objects. The following observation is due to Makkai and Pitts \cite[Corollary~1.5]{MakkaiPitts1987}.

\begin{proposition}\label{prop:twofaces}
The category $\fp{\cat}$ is dually equivalent to $\Lfp(\cat,\Set)$.
\end{proposition}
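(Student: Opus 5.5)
The idea is to identify $\Lfp(\cat,\Set)$ with the category of representable-like functors, namely those of the form $\cat(A,-)$ with $A$ finitely presentable. The contravariant Yoneda assignment $A\mapsto \cat(A,-)$ then gives the desired dual equivalence $\fp{\cat}^{\op}\simeq \Lfp(\cat,\Set)$.

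First I check that the functor is well defined. For $A$ in $\fp{\cat}$, the hom-functor $\cat(A,-)$ preserves all limits, as every hom-functor does, and it preserves directed colimits by the definition of finite presentability. So it is an lfp morphism. On arrows, $f\colon A\to B$ is sent to the natural transformation $\cat(f,-)\colon \cat(B,-)\to\cat(A,-)$. By the Yoneda lemma this assignment is fully faithful: natural transformations $\cat(B,-)\to \cat(A,-)$ correspond bijectively to elements of $\cat(A,B)$. Note that the $2$-cells of $\Lfp$ are all natural transformations, so no extra restriction appears here.

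The main step is essential surjectivity. Take an lfp morphism $F\colon \cat\to\Set$. By \cite[Theorem~1.66]{ar94book}, recalled in Section~\ref{s:lfp}, $F$ has a left adjoint $G\colon\Set\to\cat$, and this left adjoint preserves finitely presentable objects. Then
\[
F(-)\cong \Set(1,F(-))\cong \cat(G(1),-).
\]
The singleton $1$ is finitely presentable in $\Set$, so $A\coloneqq G(1)$ lies in $\fp{\cat}$ and $F\cong\cat(A,-)$. If one prefers to avoid the preservation statement, there is a direct argument: $F$ is representable because it is a right adjoint, say $F\cong\cat(A,-)$, and $A$ is finitely presentable precisely because $F$ preserves directed colimits.

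Combining full faithfulness with essential surjectivity gives the equivalence $\fp{\cat}^{\op}\simeq\Lfp(\cat,\Set)$. The only point needing care is the representability step, that is, invoking the adjoint functor theorem in the form of \cite[Theorem~1.66]{ar94book}. Everything else is the Yoneda lemma.
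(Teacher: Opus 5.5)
Your proof is correct. The paper does not give an argument of its own here: it attributes the statement to Makkai and Pitts \cite[Corollary~1.5]{MakkaiPitts1987}. So your proposal is not a variant of the paper's proof but a self-contained proof of the cited fact.

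Your route shows that nothing beyond the preliminaries recalled in Section~\ref{s:lfp} is needed. Full faithfulness of $A\mapsto\cat(A,-)$ is pure Yoneda, and you rightly note that the $2$-cells of $\Lfp$ are arbitrary natural transformations, so no restriction arises. Essential surjectivity is exactly the representability of a functor $\cat\to\Set$ that preserves limits and directed colimits, which follows from the adjoint functor theorem \cite[Theorem~1.66]{ar94book} together with $\cat(G(1),-)\cong\Set(1,F(-))$.

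Your ``direct argument'' still depends on \cite[Theorem~1.66]{ar94book} for the existence of the left adjoint, and hence for representability. It only bypasses the clause that $G$ preserves finitely presentable objects, which is itself a one-line consequence of $F$ preserving directed colimits.

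The citation buys brevity and places the statement within Gabriel--Ulmer duality, where $\Lfp(\cat,\Set)$ serves as the theory of $\cat$. Your argument isolates the elementary content behind that identification.
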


Gabriel--Ulmer duality can be regarded as a \emph{syntax-semantics} duality. In fact, categories with finite limits are precisely the syntactic categories of finitary essentially algebraic theories, and lfp categories are the categories of models of such theories (cf.\ Section~\ref{s:lfp}). Accordingly, the $2$-functor $\Lex(-,\Set)$ can be identified with the $2$-functor $\mathsf{Mod}$ sending a theory to its category of models in $\Set$, and $\Lfp(-,\Set)$ with the $2$-functor $\mathsf{Th}$ sending a category of models to its theory.

\begin{remark}
Essentially algebraic logic has many different presentations. It was first introduced by Freyd under the name of \emph{Cartesian logic} \cite{Freyd1972AspectsOfTopoi,Freyd2002CartesianLogic}. Over the years, alternative approaches emphasising different syntactic elements have emerged. These include Cartmell's generalised algebraic theories \cite{Cartmell1986GeneralisedAlgebraicTheories}, and Palmgren and Vickers' partial Horn theories \cite{PalmgrenVickers2007PartialHorn}. 
These different presentations of the syntax are equiexpressive, and the corresponding syntactic categories are precisely the categories with finite limits. There are other categorical models of essentially algebraic theories. For example, \emph{clans} provide a more precise representation of generalised algebraic theories \cite{Frey2025DualityClans,ALN25}, while \emph{discrete Cartesian restriction categories} capture the partiality of some operations more faithfully \cite{DiLibertiEtAl2021PartialTheories}. 
\end{remark}

We will now lay the groundwork for axiomatising Beth companions. Note that the data of a set $\ar{M}$ of morphisms between finitely presentable objects in $\cat$ can be encoded as a natural transformation $[\ar{M}]$ as on the left-hand side below, where $\overline{\ar{M}}$ is the small full subcategory of the arrow category of $\cat$ consisting of the arrows in $\ar{M}$.

\[
\begin{tikzcd}
	\overline{\mathcal{M}}
	&& {\cat_\omega}
	&& {\makebox[0.7cm][c]{$\cat$}}
	&& {\makebox[0.7cm][c]{$[\overline{\mathcal{M}}^{\op},\Set]$}}
	\arrow[""{name=0, anchor=center, inner sep=0},
		"{\mathsf{dom}(\mathcal{M})}",
		bend left=40, from=1-1, to=1-3]
	\arrow[""{name=1, anchor=center, inner sep=0},
		"{\mathsf{cod}(\mathcal{M})}"',
		bend right=40, from=1-1, to=1-3]
	\arrow[""{name=2, anchor=center, inner sep=0},
		"{\mathsf{dom}(\mathcal{M})^*}",
		bend left=40, from=1-5, to=1-7]
	\arrow[""{name=3, anchor=center, inner sep=0},
		"{\mathsf{cod}(\mathcal{M})^*}"',
		bend right=40, from=1-5, to=1-7,shorten >=3pt]
	\arrow["{[\mathcal{M}]}"{description}, Rightarrow,
		from=0, to=1, shorten <=2pt, shorten >=2pt]
	\arrow["{[\mathcal{M}]^*}"{description}, Rightarrow,
		from=3, to=2, shorten <=2pt, shorten >=2pt]
\end{tikzcd}
\]

It follows from Theorem~\ref{t:GU-duality} and Proposition~\ref{prop:twofaces} that such a representation has a specular description in $\Lfp$ given by the rightmost diagram above. Just observe that, under Gabriel--Ulmer duality, the presheaf category $[\overline{\ar{M}}^\op,\Set]$ is dual to the finite limit completion of~$\overline{\ar{M}}^\op$, and $\cat$ is dual to $(\cat_\omega)^\op$. 
Explicitly, the natural transformation $[\ar{M}]^*$ is given by the following formula, for every object $X$ of $\cat$ and every arrow $m\colon A \to B$ in $\ar{M}$:
\[
([\ar{M}]^*)_X(m) = \cat(B,X) \to \cat(A,X) \quad f \mapsto f \circ m.
\]

As we will see in Proposition~\ref{p:Beth-inverter} below, under appropriate assumptions on $\cat$, its Beth companion has a universal property with respect to a diagram such as the one on the right above. To this end, we recall the notion of an inverter in any $2$-category, which generalises the idea of an equaliser.
Given a $2$-cell $\alpha \colon f\Rightarrow g$, the \emph{inverter} of~$\alpha$, if it exists, consists of an object $\mathsf{Inv}(\alpha)$ together with a morphism $j$, as displayed below, such that (a)~$\alpha_j$ is an isomorphism and (b)~it is universal among the objects satisfying (a), as detailed in \cite[\S 4, p.~310]{Kelly1989}.

\[\begin{tikzcd}
	{\mathsf{Inv}(\alpha)} & \phantom{\cdot} & & \phantom{\cdot}
	\arrow["j"', dashed, swap, from=1-1, to=1-2]
	\arrow[""{name=0, anchor=center, inner sep=0}, "g"', curve={height=18pt}, from=1-2, to=1-4]
	\arrow[""{name=1, anchor=center, inner sep=0}, "f", curve={height=-18pt}, from=1-2, to=1-4]
	\arrow["\alpha"', swap, Rightarrow, from=1, to=0, shorten <= 4pt, shorten >= 4pt]
\end{tikzcd}\]
If $\alpha$ is of the form $[\ar{M}]^*$, we say that $\mathsf{Inv}(\alpha)$ is the \emph{inverter of $\ar{M}$}.

In $\CAT$, the $2$-category of locally small categories and functors between them, the inverter $\mathsf{Inv}(\alpha)$ coincides with the full subcategory of those objects $X$ such that $\alpha_{X}$ is an isomorphism.

\begin{proposition}\label{p:Beth-inverter}
  Let $\cat$ be an lfp category satisfying \EAP, and suppose the monad~$\mathbb{T}_{\X}$ is strongly finitary. If $\cat$ admits a Beth companion, then the latter is the inverter in $\Lfp$ of a set of epi-monos between finitely presentable objects of~$\cat$.
\end{proposition}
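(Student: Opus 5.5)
By Theorem~\ref{thm:orthinverter} (the implication \ref{i:TEM-strongly-fin-has-BC} $\IMP$ \ref{i:TEM-set-of-fp-epimonos}, which as remarked in its proof only needs \EAP), there is a set $\ar{H}$ of epi-monos between finitely presentable objects of $\cat$ with $\ort{\ar{H}}=\ort{\X}$. By Theorem~\ref{t:Beth-companion-amalgam-orthogonal}, $\ort{\X}$ is the Beth companion $\B{\cat}$. Concretely, I will take $\ar{H}$ to be the components $\eta_{A}$ of the unit of the reflection $\cat\to\ort{\X}$ at finitely presentable $A$. This is a set up to isomorphism. By strong finitarity, these components are epi-monos between finitely presentable objects. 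So it remains to show that $\ort{\ar{H}}$, with its inclusion $j\colon\ort{\ar{H}}\into\cat$, is the inverter in $\Lfp$ of $[\ar{H}]^{*}\colon \mathsf{cod}(\ar{H})^{*}\Rightarrow\mathsf{dom}(\ar{H})^{*}$.

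First I check that $j$ is a $1$-cell of $\Lfp$ that makes $[\ar{H}]^{*}$ invertible. By Theorem~\ref{orthog-reflective}, $\ort{\ar{H}}$ is lfp, reflective, and closed under directed colimits, so $j$ preserves limits and directed colimits. For $X\in\ort{\ar{H}}$ and $m\colon A\to B$ in $\ar{H}$, the component $([\ar{H}]^{*})_{X}(m)\colon\cat(B,X)\to\cat(A,X)$, $f\mapsto f\circ m$, is a bijection precisely because $X$ is orthogonal to $m$. Hence $([\ar{H}]^{*})_{j}$ is an isomorphism. Conversely, the same computation shows that an object $X$ of $\cat$ has $([\ar{H}]^{*})_{X}$ invertible iff $X\in\ort{\ar{H}}$. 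So $\ort{\ar{H}}$ is exactly the inverter computed in $\CAT$.

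Next comes the universal property in $\Lfp$. Let $F\colon\catt\to\cat$ be an lfp morphism with $([\ar{H}]^{*})_{F}$ invertible. Then every $F(D)$ lies in $\ort{\ar{H}}$, so $F$ factors uniquely through $j$ as a functor $F'\colon\catt\to\ort{\ar{H}}$. Since $\ort{\ar{H}}$ is closed in $\cat$ under limits and directed colimits, and $j$ is fully faithful, $F'$ preserves limits and directed colimits and is therefore an lfp morphism. For the $2$-dimensional part, take a $2$-cell $F\Rightarrow G$ between such functors. Its components lie in the full subcategory $\ort{\ar{H}}$, so it lifts uniquely to a $2$-cell $F'\Rightarrow G'$. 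Thus the inverter in $\CAT$ is also the inverter in $\Lfp$, i.e.\ $\Lfp\into\CAT$ creates it here.

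The main subtlety is making sure that the inverter is taken in $\Lfp$ rather than $\CAT$, and that it is a strict rather than only a bi-universal object. Fullness and repleteness of $\ort{\ar{H}}$ handle this. A second point to double-check is the passage from the class of all such unit components to a set, which uses essential smallness of $\fp{\cat}$. Finally, I will verify that Theorem~\ref{thm:orthinverter}\ref{i:TEM-strongly-fin-has-BC}$\IMP$\ref{i:TEM-set-of-fp-epimonos} indeed only uses \EAP, via Remark~\ref{rmk:EAP-vs-TEM}, which gives \TEM\ once a Beth companion exists.
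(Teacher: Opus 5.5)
Your proof is correct. Its first half matches the paper's. Both arguments use Remark~\ref{rmk:EAP-vs-TEM} and Theorem~\ref{thm:orthinverter} to obtain a set $\ar{H}$ of epi-monos between finitely presentable objects with $\ort{\ar{H}}=\ort{\X}=\B{\cat}$. Both then identify the inverter of $[\ar{H}]^{*}$ in $\CAT$ with $\ort{\ar{H}}$.

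The difference is in how you pass from $\CAT$ to $\Lfp$. The paper cites Bird's result that the forgetful $2$-functor $\Lfp\to\CAT$ preserves $2$-limits and is $2$-conservative, so it creates inverters. You instead check the universal property by hand:
\begin{itemize}
\item Theorem~\ref{orthog-reflective} shows that $\ort{\ar{H}}$ is lfp and closed under limits and directed colimits, so $j$ is an lfp morphism.
\item The fully faithful inclusion $j$ reflects limits and directed colimits, so the factorisation $F'$ of any lfp morphism $F$ inverting $[\ar{H}]^{*}$ is again lfp.
\item By fullness, $2$-cells lift uniquely along $j$.
\end{itemize}

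The paper's route is shorter and works uniformly for all $2$-limits in $\Lfp$. Yours is self-contained and avoids the external theorem. It also makes explicit what the argument actually needs: a full lfp subcategory closed in $\cat$ under limits and directed colimits, which Theorem~\ref{orthog-reflective} provides for any $\omega$-orthogonality class.

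One small correction: strict uniqueness of $F'$ comes from $j$ being a subcategory inclusion, injective on objects and arrows. Existence comes from fullness. Repleteness plays no role there.
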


\begin{proof}
It follows from Remark~\ref{rmk:EAP-vs-TEM} and Theorem~\ref{thm:orthinverter} that there exists a set $\ar{H}$ of epi-monos between finitely presentable objects of $\cat$ such that the Beth companion of $\cat$ is the orthogonality class of $\ar{H}$.
We must show that the following diagram is an inverter in $\Lfp$.
\[
  \begin{tikzcd}
    {\mathcal{H}^\perp} &
    {\makebox[0.5cm][c]{$\cat$}}
    && {\makebox[0.5cm][c]{$\Set^{\overline{\mathcal{H}}}$}}
    \arrow[hook, from=1-1, to=1-2]
    \arrow[""{name=2, anchor=center, inner sep=0},
      "{\mathsf{dom}(\mathcal{H})^*}",
      bend left=40, from=1-2, to=1-4]
    \arrow[""{name=3, anchor=center, inner sep=0},
      "{\mathsf{cod}(\mathcal{H})^*}"',
      bend right=40, from=1-2, to=1-4, shorten >=3pt]
    \arrow["{[\mathcal{H}]^*}"{description}, Rightarrow,
      from=3, to=2, shorten <=2pt, shorten >=2pt]
  \end{tikzcd}
\]

To this end, observe that the forgetful functor $\Lfp \to \CAT$ creates all $2$-limits, and in particular inverters, because it is $2$-conservative and preserves $2$-limits; for the latter fact, see \cite[Theorem~2.17]{Bird1984}. In turn, as mentioned above, the inverter in $\CAT$ is the full subcategory of $\cat$ consisting of the objects $X$ such that, for each arrow $h\colon A \to B$ in $\ar{H}$, the induced map $\cat(B,X) \to \cat(A,X)$, $f \mapsto f \circ h$, is a bijection. This means precisely that $\ort{\ar{H}}$ is the desired inverter. 
\end{proof}

We can now move across the duality to infer a syntactic presentation of Beth companions. The notion of \emph{coinverter} in a $2$-category is dual to that of inverter. In particular, the coinverter of a $2$-cell of the form $[\ar{M}]$ is called the \emph{coinverter of $\ar{M}$}. The following result is an immediate consequence of Proposition~\ref{p:Beth-inverter}.

\begin{theorem}\label{t:syntactic-Beth-companion}
  Let $\cat$ be an lfp category satisfying \EAP, and suppose the monad~$\mathbb{T}_{\X}$ is strongly finitary. If the Beth companion $\B{\cat}$ exists, its theory $\mathsf{Th}(\B{\cat})$ is the coinverter in $\Lex$ of a set of epi-monos in $\mathsf{Th}(\cat)$.
\end{theorem}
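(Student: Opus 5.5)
The plan is to transport Proposition~\ref{p:Beth-inverter} across Gabriel--Ulmer duality (Theorem~\ref{t:GU-duality}), using that a biequivalence $\Lex^{\op}\simeq\Lfp$ sends $2$-limits in $\Lfp$ to $2$-colimits in $\Lex$. In particular, it turns inverters in $\Lfp$ into coinverters in $\Lex$.

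First, Proposition~\ref{p:Beth-inverter} provides a set $\ar{H}$ of epi-monos between finitely presentable objects of $\cat$ such that $\B{\cat}=\ort{\ar{H}}$. This subcategory is the inverter in $\Lfp$ of the $2$-cell
\[
[\ar{H}]^{*}\colon \mathsf{cod}(\ar{H})^{*}\Rightarrow \mathsf{dom}(\ar{H})^{*}\colon \cat\to[\overline{\ar{H}}^{\op},\Set].
\]
Next I will identify the image of this diagram under $\mathsf{Th}=\Lfp(-,\Set)$. By Proposition~\ref{prop:twofaces}, $\mathsf{Th}(\cat)\simeq(\fp{\cat})^{\op}$. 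The presheaf category $[\overline{\ar{H}}^{\op},\Set]$ corresponds to the finite-limit completion of $\overline{\ar{H}}^{\op}$. The two functors $\mathsf{dom}(\ar{H})^{*}$ and $\mathsf{cod}(\ar{H})^{*}$ correspond to the finite-limit-preserving extensions of $\mathsf{dom}(\ar{H})^{\op}$ and $\mathsf{cod}(\ar{H})^{\op}\colon\overline{\ar{H}}^{\op}\to(\fp{\cat})^{\op}$. The formula $([\ar{H}]^{*})_X(m)=(-\circ m)$ shows that the $2$-cell $[\ar{H}]^{*}$ corresponds to the $2$-cell built from the arrows of $\ar{H}$ themselves, read in $(\fp{\cat})^{\op}$. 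In other words, $\mathsf{Th}$ of this $2$-cell is $[\ar{H}]$, up to the op-reversal.

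Then, since $\mathsf{Th}$ is a biequivalence $\Lfp\to\Lex^{\op}$, it preserves bilimits. Hence $\mathsf{Th}(\B{\cat})$ is the coinverter in $\Lex$ of the corresponding $2$-cell, that is, the coinverter of $\ar{H}$ regarded as a set of arrows of $\mathsf{Th}(\cat)$. These are epi-monos of $\cat$ between finitely presentable objects, which is the sense of ``epi-monos in $\mathsf{Th}(\cat)$'' intended in the statement.

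The main obstacle is a $2$-categorical subtlety rather than a computation. Biequivalences preserve limits only up to equivalence, i.e.\ bilimits. The inverter of Proposition~\ref{p:Beth-inverter} is a strict inverter, created from $\CAT$. So I must check that this inverter is also a bi-inverter. This holds because inverters of replete full subcategories are flexible: the inverter in $\CAT$ is replete, so it is also the bi-inverter. The corresponding coinverter in $\Lex$ is then determined up to equivalence. I will state the conclusion accordingly, since theories are in any case only defined up to equivalence.
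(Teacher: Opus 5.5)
Your proposal is correct and follows the paper's route: the paper derives the theorem as an immediate consequence of Proposition~\ref{p:Beth-inverter}, transporting the inverter of $[\ar{H}]^{*}$ in $\Lfp$ across Gabriel--Ulmer duality to the coinverter of $\ar{H}$ in $\Lex$, via the correspondence between $[\ar{M}]^{*}$ and $[\ar{M}]$ explained just before that proposition. Your check that the strict inverter is also a bi-inverter, so that it survives the biequivalence, is a point the paper leaves implicit; the precise reason is that inverters are flexible (PIE) limits, so strict inverters are bilimits, and this has nothing to do with the inverter being a replete full subcategory.
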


\begin{remark}
  It is worth noting that the latter theorem is far from trivial, as the forgetful functor $\Lex \to \Cat$ does not preserve coinverters, and the description of coinverters in $\Lex$ is almost entirely formal. Indeed, $\Lex$ is $2$-monadic over $\Cat$, and colimits in categories of algebras are seldom computed in the underlying category, especially coequaliser-type colimits. Our proof relies crucially on Gabriel--Ulmer duality and on the fact that $2$-limits in $\Lfp$ are computed in $\CAT$.
\end{remark}

Under the assumptions of Theorem~\ref{t:syntactic-Beth-companion}, the resulting quotient $\mathsf{Th}(\cat) \to \mathsf{Th}(\B{\cat})$ is a pseudo-epimorphism in $\Lex$ \cite[Lemma~2.29]{Bourke2010Codescent}, and it can therefore be understood as a \emph{quotient theory}. In \cite[\S III.1]{HebertAdamekRosicky2001}, these pseudo-epimorphisms are called \emph{quotient theory morphisms}, or \emph{quotient functors}, and it is shown that they are surjective in an appropriate sense. Such quotients only add axioms to the theory $\mathsf{Th}(\cat)$, as opposed to new operations or sorts; cf.\ \cite[p.~486]{MakkaiPitts1987}.

\section{Examples}\label{s:examples}

\subsubsection*{Posets} Let $\Pos$ be the lfp category of posets and monotone maps. For every poset~$P$ there exists a monotone bijection $P\to Q$ (i.e., an epi-mono in $\Pos$) with $Q$ totally ordered~\cite{Szpilrajn1930}. Thus, the saturated objects in $\Pos$ are precisely the totally ordered ones. But the full subcategory $\Tot$ of $\Pos$ consisting of the totally ordered posets is not reflective, e.g.\ because the inclusion $\Tot\into \Pos$ does not preserve products. Therefore, it follows from Theorem~\ref{t:Beth-companion-unique} that $\Pos$ does not admit a Beth companion.

\subsubsection*{Preorders} The lfp category of preordered sets and monotone maps admits a strong Beth companion, which coincides with the full subcategory formed by the indiscrete preorders (meaning that the preorder relation is total).

\subsubsection*{Bounded distributive lattices}

The variety $\DL$ of bounded distributive lattices has transferable monos; cf.\ e.g.\ the recent survey~\cite{Metcalfe2026}. Let $i$ be the inclusion of the free bounded distributive lattice on one generator into the free Boolean algebra on one generator (regarded as a lattice), which sends the generator~$x$ to itself, as depicted below.

\[\begin{tikzpicture}[
    scale=0.75,
    node distance=1.5cm,
    element/.style={circle, inner sep=2pt},
    inclusion/.style={
      dashed, 
      thick, 
      shorten >=6pt, 
      shorten <=6pt, 
      color=gray,
      -{Stealth[length=2.5mm]}
    }
  ]
  
  \node[element] (0D) at (0,0) {$0$};
  \node[element] (xD) at (0,1) {$x$};
  \node[element] (1D) at (0,2) {$1$};
  
  \draw[thick] (0D) -- (xD) -- (1D);

  \node[element] (0B) at (4,0) {$0$};
  \node[element] (xB) at (3,1) {$x$};
  \node[element] (nxB) at (5,1) {$\neg x$};
  \node[element] (1B) at (4,2) {$1$};
  
  \draw[thick] (0B) -- (xB) -- (1B);
  \draw[thick] (0B) -- (nxB) -- (1B);

\draw[{[right,length=1.0mm]}->,shorten >=10pt, shorten <=10pt] (xD) -- node[above] {$i$} (xB);
\end{tikzpicture}\]
Note that $i$ is epi-monic in $\DL$. It is not difficult to see that the orthogonality class~$\ort{i}$ coincides with $\BA$, the full subcategory of $\DL$ consisting of Boolean algebras. Since $\BA$ is balanced, Corollary~\ref{cor:S-perp-Beth-comp} implies that $\BA$ is the strong Beth companion of $\DL$ (recall from Section~\ref{s:preliminaries-amalgamation} that, in any lfp category $\cat$ with the amalgamation property, $\Sat(\cat)=\Abs(\cat)$).

\subsubsection*{Abelian monoids} 
The variety $\AMon$ of Abelian monoids does not satisfy the amalgamation property (cf.\ again~\cite{Metcalfe2026}). We shall prove that $\AMon$ does not admit a Beth companion, let alone a strong one. In fact, we will show that a stronger result holds: there exists no balanced full mono-reflective subcategory of $\AMon$.

For every positive integer $n$, consider the Abelian monoid $E_{n}$ with generators $a,b$ and relations
\[
2b + a = b, \ \ b + (n+1) a = n a,
\]
where $kx$, for $x\in \{a,b\}$, denotes the sum of $x$ with itself $k$ times. Observe that
\[
b + n a = (n-1) a \ \Longrightarrow b + (n+1) a = n a,
\]
and so there is a natural quotient map $E_{n}\twoheadrightarrow E_{n-1}$. Further, $E_{1}$ is isomorphic to~$\Z$, the monoid of integers. Hence, we have a chain of quotients as displayed below.
\[
\cdots \twoheadrightarrow E_{n+1} \twoheadrightarrow E_{n}\twoheadrightarrow E_{n-1} \twoheadrightarrow \cdots \twoheadrightarrow \Z
\]
Each $E_{n}$ is an epi-extension of $\N$, and they are the only ones besides the identity (cf.\ \cite[p.~241]{Isbell1966}). In particular, they are maximal elements in the poset $\EExt(\N)$ of epi-extensions of $\N$, and so the monoids $E_{n}$ are saturated.

\begin{lemma}\label{l:AMon-no-Beth-completion}
There exists no balanced mono-reflective subcategory of $\AMon$.
\end{lemma}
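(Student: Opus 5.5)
The plan is to argue by contradiction. Suppose $\catt\into\AMon$ is a balanced full mono-reflective subcategory, with reflector $L$ and unit $\eta$. Replacing $\catt$ by its repletion changes nothing relevant, so we may assume $\catt$ is replete. Since $\AMon$ is lfp, Lemma~\ref{l:existence-saturated-epi-ext} says every object admits an epi-mono into a saturated object. Both parts of Lemma~\ref{l:mono-refl-sat} then apply, and give $\catt=\Sat(\AMon)$. In particular, every $E_{n}$ lies in $\catt$, since each is saturated.

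Next I would look at the reflection of $\N$. The unit $\eta_{\N}\colon\N\to RL(\N)$ is an epi-mono, so it is an epi-extension of $\N$. By the classification of epi-extensions of $\N$ recalled above (Isbell), it is, up to isomorphism, either $\id_{\N}$ or one of the canonical extensions $\N\emb E_{n}$, with $1\mapsto a$. It cannot be $\id_{\N}$. Indeed, $\N$ is not saturated, because $\N\emb\Z\cong E_{1}$ is an epi-mono that is not an isomorphism, whereas $RL(\N)$ lies in $\catt=\Sat(\AMon)$. Hence $\eta_{\N}$ is, up to isomorphism, the canonical extension $j_{n}\colon\N\emb E_{n}$ for some fixed $n\geq 1$.

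Now use the universal property of the unit against $E_{n+1}\in\catt$. The canonical extension $j_{n+1}\colon\N\emb E_{n+1}$ factors as $j_{n+1}=h\circ j_{n}$ for a unique $h\colon E_{n}\to E_{n+1}$. Let $q\colon E_{n+1}\twoheadrightarrow E_{n}$ be the natural quotient. It fixes $a$ and $b$, so $q\circ j_{n+1}=j_{n}$. Then
\[
q\circ h\circ j_{n}=q\circ j_{n+1}=j_{n},
\]
and since $j_{n}$ is epic we get $q\circ h=\id_{E_{n}}$. Moreover $h$ is epic, because $h\circ j_{n}=j_{n+1}$ is epic. So $h$ is a split mono that is epic, hence an isomorphism, and therefore $q=h^{-1}$ is an isomorphism too. (Alternatively, one can argue that $h$ is an epi-mono out of the saturated object $E_{n}$, hence invertible.)

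The contradiction, and the step I expect to need the most care, is showing that $q\colon E_{n+1}\to E_{n}$ is not an isomorphism. Equivalently, the relation $b+na=(n-1)a$ must not hold in $E_{n+1}$. I would first cite Isbell's result that the $E_{n}$ are pairwise non-isomorphic as epi-extensions of $\N$, i.e.\ distinct elements of $\EExt(\N)$. By Remark~\ref{rem:order-on-epi-ext}\ref{i:sym-equi}, this is exactly what is needed: $q$ being an isomorphism would give $j_{n}\sim j_{n+1}$. If a self-contained argument is preferred, I would construct an explicit Abelian monoid satisfying $2b+a=b$ and $b+(n+2)a=(n+1)a$ in which $b+na\neq(n-1)a$. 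A natural candidate is a finite commutative monoid, built from $\{0,a,2a,\dots\}$ truncated at $(n+1)a$ together with elements $b+ka$, with addition determined by the relations. The contradiction shows that no such $\catt$ exists.
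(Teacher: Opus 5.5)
Your argument is correct and is essentially the paper's proof. You identify $\eta_{\N}$ with some $\N\emb E_{n}$, factor $\N\emb E_{n+1}$ through it, and use the quotient $E_{n+1}\twoheadrightarrow E_{n}$ to get an epic split mono, hence an isomorphism; this contradicts $E_{n}$ and $E_{n+1}$ being distinct epi-extensions of $\N$, a point the paper leaves implicit and your appeal to Isbell justifies.

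There is one small slip in your self-contained variant. The map $q$ is invertible if and only if $b+(n+1)a=na$ holds in $E_{n+1}$, so you need $b+(n+1)a\neq na$, not the weaker $b+na\neq(n-1)a$. The monoid $\{0,a,\dots,na,\infty\}$ witnesses this: take $\infty$ absorbing, $ka+ja=\infty$ whenever $k+j>n$, and send $a\mapsto a$, $b\mapsto\infty$.
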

\begin{proof}
Let $\catt\into \AMon$ be a balanced full mono-reflective subcategory, with unit $\eta$. We can assume without loss of generality that $\catt$ is isomorphism-closed in $\AMon$. By Lemma~\ref{l:mono-refl-sat}, $\catt$ contains all saturated monoids; in particular, it contains the monoids~$E_{n}$ defined above for every positive integer $n$, including $\Z$. 

Consider the component~$\eta_{\N}$ of the unit at $\N$. If $\eta_{\N}$ is an isomorphism, then $\N$ belongs to $\catt$. But then the epi-monic inclusion $i\colon \N\into \Z$ belongs to $\catt$, contradicting the fact that $\catt$ is balanced.
On the other hand, if $\eta_{\N}$ is a proper epi-extension, it can be identified with the arrow ${\iota_{n}\colon \N\to E_{n}}$ that sends $1$ to $a$, for some $n$. Now, consider the unique arrow $\xi$ making the following diagram commute.
\[\begin{tikzcd}
\N \arrow{r}{\eta_{\N}} \arrow{dr}[swap]{\iota_{n+1}} & E_{n} \arrow{d}{\xi} \\
{} & E_{n+1}
\end{tikzcd}\]
If $q_{n}\colon E_{n+1}\twoheadrightarrow E_{n}$ is the natural quotient, the universal property of $\eta_{\N}$ implies that $q_{n}\circ \xi =\id_{E_{n}}$. This shows that $\xi$ is both a section and an epimorphism, hence an isomorphism. This is a contradiction.
\end{proof}

Note that $\AMon$ does not satisfy \TEM. Just observe that the orthogonality class~$\ort{i}$ in $\AMon$ induced by the epi-monic inclusion $i\colon \N\into\Z$ is the full subcategory $\AGrp$ of $\AMon$ formed by Abelian groups. As $\AGrp$ is balanced, it coincides with~$\ort{\X}$, where $\X$ is the class of epi-monos in $\AMon$. On the other hand, it follows from the above discussion that not every saturated object in $\AMon$ is a group. Therefore, $\ort{\X}$ is a proper subclass of $\Sat(\AMon)$. It follows from Lemma~\ref{l:ort-vs-sat} that $\AMon$ does not have transferable epi-monos.

\subsubsection*{Cancellative Abelian monoids} By contrast, the sub-quasivariety of $\AMon$ consisting of cancellative Abelian monoids admits a strong Beth companion, namely the variety of Abelian groups (which satisfies \SES, as do all Abelian categories). The reflector sends a cancellative Abelian monoid to its group of differences.

\subsubsection*{Cancellative partially ordered Abelian monoids.} 
A \emph{partially ordered Abelian monoid} is a pair $(M,\leq)$ where $M$ is an Abelian monoid and ${\leq}\subseteq M\times M$ is a translation-invariant partial order, meaning that $x\leq y$ implies $x+ t\leq y+t$ for all $x,y,t\in M$. A partially ordered Abelian monoid is \emph{cancellative} if it is cancellative as a monoid. Let~$\COAMon$ be the category whose objects are cancellative partially ordered Abelian monoids and whose morphisms are monotone monoid homomorphisms. We claim that $\COAMon$ does not admit a Beth companion.

Firstly, note that an arrow $f\colon X \to Y$ in $\COAMon$ whose domain is a group is epi-monic if, and only if, it is a monotone monoid isomorphism. For the non-trivial direction, suppose that $f$ is epi-monic in $\COAMon$. If $Y$ is a group, then $f$ is also epi-monic in the full subcategory $\OAGrp$ of $\COAMon$ consisting of partially ordered Abelian groups; in turn, epi-monos in $\OAGrp$ are precisely the monotone group isomorphisms.  
Hence, $f$ is a monoid isomorphism. For the general case, observe that $\OAGrp$ is a mono-reflective subcategory of~$\COAMon$, whose reflector extends the order on an object of~$\COAMon$ to a translation-invariant order on its group of differences in the obvious way. Hence, $Y$ admits an epi-mono $g$ to a partially ordered Abelian group $Z$. By the previous argument, $g\circ f$ is a monoid isomorphism, and so $f$ is a monoid isomorphism.

Therefore, every totally ordered Abelian group is a saturated object of $\COAMon$. Moreover, since $\OAGrp$ is a mono-reflective subcategory of $\COAMon$, Lemma~\ref{l:mono-refl-sat} implies that every saturated object of $\COAMon$ is a group. Assume that $\COAMon$ admits a balanced, full and replete mono-reflective subcategory, which then coincides with the category of saturated objects of $\COAMon$ by Proposition~\ref{p:saturated-Beth-comp-lfp}. Consider the reflection $s\colon (\Z,=)\to S$ of the group of integers with the trivial order, and let $\sqsubseteq$ be any translation-invariant total order on $\Z$. As $(\Z, \sqsubseteq)$ is saturated, there is a unique arrow $t\colon S \to (\Z,\sqsubseteq)$ that extends the identity map $i\colon (\Z,=) \to (\Z,\sqsubseteq)$. Note that both $s$ and $i$ are epi-monos in $\COAMon$. Since $S$ is a group, and epi-monos between groups in $\COAMon$ have the 2-out-of-3 property because they coincide with monotone monoid isomorphisms, $t$ is epi-monic. As $S$ is saturated, $t$ must be an isomorphism. Upon recalling that there are (exactly) two distinct translation-invariant total orders on~$\Z$, namely the usual order $\leq$ and its opposite, the universal property of the reflector entails the existence of an isomorphism $(\Z,\leq)\to (\Z,\leq^{\op})$ whose underlying group homomorphism is the identity --- a contradiction.

This implies that $\COAMon$ admits no balanced mono-reflective subcategory. In particular, it does not admit a Beth companion, let alone a strong one.

\medskip
The remaining examples concern classes of rings, which we will tacitly assume to be both commutative and unital. Similarly, we will assume that algebras over a ring are commutative. To motivate these examples, let us first consider the category $\Field$ of fields, regarded as a full subcategory of the category $\Rng$ of rings and ring homomorphisms. As $\Field$ is neither complete nor cocomplete (for instance, it lacks initial and terminal objects), it is not lfp. Also, $\Field$ is not balanced: for every prime number $p$, let $\mathbb{F}_{p}$ be the finite field with $p$ elements, and let $\mathbb{F}_{p}(t)$ be the field of rational functions with coefficients in $\mathbb{F}_{p}$. Then the Frobenius endomorphism
\[
\mathbb{F}_{p}(t) \to \mathbb{F}_{p}(t), \ \ x\mapsto x^{p}
\]
is epi-monic in $\Field$, but not an isomorphism. The orthogonality class in $\Field$ induced by the Frobenius endomorphisms on $\mathbb{F}_{p}(t)$, for each prime $p$, is the class of \emph{perfect fields}. Recall that a field is perfect if it has either characteristic $0$, or it has positive characteristic $p$ and its Frobenius endomorphism is an isomorphism (meaning that each element has a necessarily unique $p$-root). The full subcategory $\Field_{\pi}$ of $\Field$ consisting of perfect fields is mono-reflective: the reflector sends a field $F$ of characteristic $p$ to its \emph{perfect closure} $F^{1/p^{\infty}}$, and is the identity on fields of characteristic $0$. We record the following fact for future reference.
\begin{lemma}\label{l:perfect-fields-balanced}
The category $\Field_{\pi}$ of perfect fields is balanced. 
\end{lemma}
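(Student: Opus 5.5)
The plan is to show directly that every epi-mono $f\colon F\to K$ in $\Field_{\pi}$ is surjective. Since $f$ is a ring homomorphism between fields, it is already injective, so we identify $F$ with a perfect subfield of $K$. Surjectivity makes $f$ a bijective ring homomorphism, hence an isomorphism in $\Field_{\pi}$. We argue in two steps: first that $K$ is algebraic over $F$, and then that every element of $K$ lies in $F$.

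\emph{Step 1: $K$ is algebraic over $F$.} Suppose some $t\in K$ is transcendental over $F$. The subfield $F(t)$ is then a rational function field. In characteristic $0$ it is perfect; in characteristic $p$ it is not, and we pass to its perfect closure $F(t)^{1/p^{\infty}}$. Either way we get a perfect field $P$ containing $F(t)$. We also choose an algebraically closed field $\Omega$ with $K\subseteq\Omega$ and $P\subseteq\Omega$ containing a second element $t'$ that is transcendental over $K$. We then build two homomorphisms $K\to\Omega$ that agree on $F$ but differ at $t$. The first is the inclusion. For the second, extend a transcendence basis of $K/F$ containing $t$ and map $t\mapsto t'$, keeping the rest of the basis fixed. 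The field $F(\text{basis})\to\Omega$ so defined is injective, and it extends to $K$ because $K$ is algebraic over $F(\text{basis})$ and $\Omega$ is algebraically closed. Since $\Omega$ is algebraically closed, it is perfect. So both maps are arrows in $\Field_{\pi}$ that agree on $F$ yet are distinct, contradicting that $f$ is epic.

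\emph{Step 2: every algebraic $\alpha\in K$ lies in $F$.} Because $F$ is perfect, $\alpha$ is separable over $F$. If $\alpha\notin F$, its minimal polynomial has another root $\beta\neq\alpha$ in an algebraic closure $\Omega$ of $K$. There is an $F$-embedding $F(\alpha)\to\Omega$ sending $\alpha\mapsto\beta$, and it extends to $K\to\Omega$ because $K/F(\alpha)$ is algebraic and $\Omega$ is algebraically closed. This embedding and the inclusion $K\subseteq\Omega$ are two arrows into the perfect field $\Omega$ that agree on $F$ but differ at $\alpha$, again contradicting that $f$ is epic. Hence $K=F$.

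The main obstacle is Step 1. There we must make sure that the extension of $t\mapsto t'$ to all of $K$ is a legitimate field embedding. This requires care with transcendence bases, and with $\Omega$ being large enough to contain an element transcendental over $K$. Taking $\Omega$ to be the algebraic closure of $K(T)$, with $T$ an indeterminate and $t'=T$, handles this. Step 2 is standard Galois theory, and it is exactly where perfectness (that is, separability) is used: for non-perfect $F$, purely inseparable elements admit no second conjugate, which is the source of the Frobenius counterexample.
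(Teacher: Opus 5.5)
Your proof is correct, but it takes a different route from the paper's. The paper proceeds in two moves:
\begin{enumerate}[label=(\roman*)]
\item It uses Lemma~\ref{l:basic-props-mono-reflections} to see that an epimorphism in $\Field_{\pi}$ is already an epimorphism in $\Field$. This relies on $\Field_{\pi}$ being mono-reflective in $\Field$ via the perfect closure.
\item It quotes the classical fact that epimorphisms in $\Field$ are exactly the purely inseparable extensions, and observes that a perfect field has no non-trivial such extension.
\end{enumerate}

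You instead test the epi condition directly against algebraically closed fields. These are perfect, so they are already objects of $\Field_{\pi}$. You therefore never pass to $\Field$ and never use the reflector. Your Steps~1 and~2 amount to a self-contained proof of the half of the classical characterisation that is actually needed: an epimorphism must be algebraic, and an algebraic element with a second conjugate is separated by two embeddings into $\Omega$.

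The two arguments buy different things. The paper's is shorter and illustrates the general mono-reflection machinery developed earlier. Yours is elementary and avoids the citation. It also shows slightly more: algebraically closed test fields alone already detect that a non-trivial extension of a perfect field is not epic.

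Two cosmetic points. First, the perfect closure $P$ of $F(t)$ in Step~1 plays no role, since your only test object is $\Omega=\overline{K(T)}$, so you can drop it. Second, in Step~1 the point to verify is not injectivity (automatic for field maps) but well-definedness on $F(B)$. This amounts to $(B\setminus\{t\})\cup\{t'\}$ being algebraically independent over $F$, which holds because $t'$ is transcendental over $K\supseteq F(B\setminus\{t\})$.
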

\begin{proof}
Every ring homomorphism between fields is injective. By Lemma~\ref{l:basic-props-mono-reflections}, epimorphisms in the category of perfect fields coincide with epimorphisms in the category of fields. Therefore, it suffices to show that every epimorphism in $\Field$ between perfect fields is surjective. In turn, it is well known that epimorphisms in $\Field$ are precisely the purely inseparable field extensions. Since perfect fields do not admit any non-trivial purely inseparable extension, every epimorphism in $\Field$ whose domain is a perfect field must be surjective.
\end{proof}

By Lemma~\ref{l:mono-refl-sat}, perfect fields are precisely the saturated objects in $\Field$.

To import these ideas into the lfp setting, recall that under Pierce's sheaf representation of rings, a ring $R$ is isomorphic to the ring of global sections of a sheaf of fields (on a Stone space) precisely when $R$ is a \emph{(von Neumann) regular ring}, meaning that $a\in aRa$ for every $a\in R$; see \cite[Theorem~10.3]{Pierce1967} or \cite[Proposition~V.2.6]{Johnstone1982}. Non-necessarily commutative regular rings were introduced in~\cite{vonNeumann1936}; see~\cite{Goodearl1991} for a complete account. Every regular ring is \emph{reduced}, i.e.\ it has no nilpotent elements. 
For a regular ring $R$, its Pierce spectrum is homeomorphic to its maximal spectrum. The Pierce sheaf representation of $R$, at a maximal ideal $m$ of $R$, has the field $R/m$ as its stalk. Let us say that a regular ring $R$ is \emph{perfect} if, for every maximal ideal $m$ of $R$, the field $R/m$ is perfect.\footnote{The term ``perfect ring'' is sometimes used in the literature to refer to rings with either characteristic zero or a positive characteristic for which the Frobenius endomorphism is an isomorphism. The latter definition is not comparable with ours.} 

As we shall see below, perfect regular rings are the strong Beth companion of both regular rings and reduced rings. This was proved using algebraic tools in~\cite{CKM2026rings}; the observation that perfect regular rings form a mono-reflective subcategory of reduced rings can already be found in~\cite{BKR2015}.\footnote{Perfect regular rings are equivalent to the ``implicitly closed meadows'' of~\cite{CKM2026rings}.} Our proofs differ substantially in their use of orthogonality classes.
Let us remark in passing that, if~$R$ is a regular ring, then every $R$-module is absolutely pure by \cite[Theorem~5]{Megibben1970}, hence absolutely closed (see Remark~\ref{rm:purity}). It follows that the category of modules over a regular ring satisfies \SES, and so it is its own strong Beth companion.

\subsubsection*{Regular rings}
Let $\RegRng$ be the category of regular rings, viewed as a full subcategory of the category $\Rng$ of rings and ring homomorphisms. It is well-known that a ring $R$ is regular precisely when each $a\in R$ admits a \emph{pseudo-inverse}, i.e.\ an element $a^{*}$ such that $a a^{*} a = a$ and $a^{*} a a^{*} = a^{*}$. If they exist, pseudo-inverses are unique. In fact, $\RegRng$ is isomorphic to the variety of algebras obtained by adding a pseudo-inverse operation $(-)^{*}$ to the language of rings, and the equations just mentioned to the axioms for rings; see e.g.\ \cite[pp.~428--429]{Cornish1977}. In particular, $\RegRng$ is lfp. However, it is not balanced: for example, as fields are regular rings, the Frobenius endomorphisms $\mathbb{F}_{p}(t) \to \mathbb{F}_{p}(t)$ are epi-monic in $\RegRng$, but not isomorphisms.
We claim that the full subcategory $\RegRng_{\pi}$ of $\RegRng$ consisting of perfect regular rings is the strong Beth companion of $\RegRng$. 

To begin with, we will prove that a regular ring $R$ is perfect if, and only if, it satisfies the condition
\begin{equation}\label{eq:p-axiom}
px = 0 \ \IMP \ \exists y \, (y^{p}=x) \tag{$E_{p}$}
\end{equation}
for every prime $p$ and every $x\in R$.
Since $p$-roots in a reduced ring are unique when they exist, the formulas~\eqref{eq:p-axiom} are Cartesian relative to the theory of reduced rings. Recall that Cartesian formulas are preserved by the global section functor; see e.g.\ \cite[\S V.1.12]{Johnstone1982}. As~\eqref{eq:p-axiom} is easily seen to hold in all perfect fields, it holds in all perfect regular rings (just recall that every perfect regular ring is the ring of global sections of a sheaf of perfect fields).\footnote{In fact, since the Pierce spectrum is a Stone space, the global section functor preserves all regular formulas (see e.g.\ \cite[\S V.1.13]{Johnstone1982}), meaning that we do not need to restrict ourselves to provably unique existential quantifiers.} Conversely, suppose $R$ is a regular ring satisfying~\eqref{eq:p-axiom} for all primes~$p$. We must show that, for every maximal ideal $m$ of~$R$, the field $R/m$ is perfect. If $R/m$ has characteristic $0$, there is nothing to prove. Hence, suppose $R/m$ has positive characteristic $p$. We aim to prove that each $x\in R/m$ has a $p$-root. To this end, we make the following observation.
\begin{lemma}\label{l:lifting-p-annihilation}
In any regular ring $A$, consider the element $e\coloneqq pp^{*}$, where $p$ denotes the sum of the unit $1$ with itself, $p$ times. For every $x\in A$, we have $p(1-e)x=0$. 
\end{lemma}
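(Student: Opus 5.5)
The identity will follow from the defining equations of the pseudo-inverse together with commutativity of $A$. Here $p$ is the image of the integer $p$ in $A$, and $p^{*}$ is its pseudo-inverse, so $pp^{*}p=p$. Put $e\coloneqq pp^{*}$. Then $e$ is idempotent, since
\[
e^{2}=pp^{*}pp^{*}=(pp^{*}p)p^{*}=pp^{*}=e,
\]
and it satisfies $ep=pp^{*}p=p$. Consequently
\[
p(1-e)=p-pe=p-ep=p-p=0,
\]
using that $A$ is commutative.

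For arbitrary $x\in A$ this gives $p(1-e)x=0\cdot x=0$. Here $p(1-e)x$ means the integer $p$ acting on $(1-e)x$; this equals the product of the ring element $p=p\cdot 1$ with $(1-e)x$, so the two readings agree.

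There is no genuine obstacle. The only points to be careful about are that $p$ is read as the ring element $p\cdot 1$, and that commutativity is what lets us rewrite $pe$ as $ep$. The argument uses only $pp^{*}p=p$ and not the other pseudo-inverse equation $p^{*}pp^{*}=p^{*}$; that second equation is needed only elsewhere, presumably when $e$ is used as an idempotent that splits $A\cong eA\times(1-e)A$.
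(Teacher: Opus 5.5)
Your proof is correct and is essentially the paper's argument: the paper computes $p(1-e)x=(p-pp^{*}p)x=(p-p)x=0\cdot x=0$, rewriting $pe$ as $pp^{*}p$ by commutativity, exactly as you do. Your check that $e$ is idempotent is not needed for this lemma, but it does no harm.
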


\begin{proof}
Computing the term $p(1-e)x$, we get
\[
p(1-e)x = (p-pe)x = (p-pp^{*}p)x=(p-p)x=0\cdot x =0.\qedhere
\]
\end{proof}

If $x\in R/m$, the element $x'\coloneqq (1-e)x$ satisfies $px'=0$ by Lemma~\ref{l:lifting-p-annihilation}. By~\eqref{eq:p-axiom}, there is $y\in R$ such that $y^{p}=x'$. Note that $p\in m$ since $R/m$ has characteristic $p$; hence $e\in m$ as the latter is an ideal. We get $x'\equiv (1-0)x \equiv x \pmod{m}$, and so $y^{p}\equiv x \pmod{m}$. That is, $y$ is the $p$-root of $x$ in $R/m$.

Now, we shall translate condition~\eqref{eq:p-axiom} into an orthogonality condition. Viewing $\RegRng$ as a variety of algebras as explained above, let $U_{p}$ be the free regular ring on one generator $x$ modulo the relation $px=0$, and consider the unique morphism
\[
f_{p}\colon U_{p}\to U_{p}
\]
that sends the generator $x$ to $x^{p}$. Note that $f_{p}$ is well-defined because
\[
p x^{p} = (px) x^{p-1} = 0\cdot x^{p-1}=0.
\]
A regular ring is orthogonal to $f_{p}$ if, and only if, it satisfies 
\[
px = 0 \ \IMP \ \exists y \, (y^{p}=x \text{ and } py=0).
\]
The latter condition is equivalent to~\eqref{eq:p-axiom}: just note that if $px=0$ and $y^{p}=x$, then 
\[
(py)^{p} = p^{p} y^{p} = p^{p} x = p^{p-1} \cdot 0 = 0,
\]
and so $py=0$ because every regular ring is reduced. Therefore, setting
\[
\ar{S}\coloneqq \{f_{p}\colon U_{p}\to U_{p} \mid p \text{ prime}\},
\]
we see that the orthogonality class $\ort{\ar{S}}$ in $\RegRng$ coincides with $\RegRng_{\pi}$.

We claim that each arrow $f_{p}$ is epi-monic. To see that $f_{p}$ is epic, suppose that $g,h\colon U_{p}\rightrightarrows A$ satisfy $g\circ f_{p} = h\circ f_{p}$ in $\RegRng$. It suffices to show that $g$ and $h$ coincide on the generator $x$. We have
\[
0 = g(f_{p}(x)) - h(f_{p}(x)) = g(x^{p}) - h(x^{p}) = g(x)^{p} - h(x)^{p} = (g(x)-h(x))^{p},
\]
where the last equality holds because $p\cdot g(x)=0=p\cdot h(x)$. Hence, $g(x)-h(x)=0$ because $A$ is reduced, showing that $f_{p}$ is epic. To see that $f_{p}$ is monic, i.e.\ injective, fix any $a\in U_{p}$ such that $f_{p}(a)=0$. We must show that $a=0$. Here, we can assume that $a$ is (an equivalence class of) a term in the language of regular rings built from the generator $x$, regarded as a variable. So, the condition $f_{p}(a)=0$ translates to $a(x^{p})=0$. Recall that every regular ring embeds in a product of fields (this is immediate from the Pierce sheaf representation). Therefore, to prove that $a=0$, it suffices to show that $\phi(a)=0$ for every arrow $\phi\colon U_{p}\to F$ with $F$ a field. Let $\alpha\coloneqq \phi(x)$ be the image of the generator. As $px=0$ and $a(x^{p})=0$, we get $p\alpha = 0$ and $a(\alpha^{p})=0$. We distinguish two cases:
\begin{itemize}
\item If $\alpha = 0$ then $\alpha = \alpha^{p}$, and so $\phi(a) = a(\alpha) = a(\alpha^{p}) = 0$.
\item If $\alpha \neq 0$ then, since $p\alpha = 0$, the field $F$ has characteristic $p$. In particular, the Frobenius endomorphism $(-)^{p}\colon F \to F$ is a morphism in $\RegRng$ (note that ring homomorphisms preserve all existing pseudo-inverses). Hence, $a(\alpha)^{p} = a(\alpha^{p}) = 0$. As $F$ is reduced, we get $\phi(a)=a(\alpha)=0$.
\end{itemize}

Next, we prove that $\ort{\ar{S}}=\RegRng_{\pi}$ is balanced.

\begin{proposition}
The category $\RegRng_{\pi}$ is balanced.
\end{proposition}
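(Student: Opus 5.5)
We must show that every epi-mono in $\RegRng_{\pi}$ is an isomorphism. Let $f\colon R\to S$ be epi-monic in $\RegRng_{\pi}$. Since $\RegRng_{\pi}=\ort{\ar{S}}$ is a full reflective subcategory of $\RegRng$, and every regular ring embeds into a product of fields, hence into a product of perfect fields (apply the perfect closure stalkwise), the reflection is mono. Lemma~\ref{l:basic-props-mono-reflections} therefore shows that $f$ is epi-monic in $\RegRng$. Monos in the variety $\RegRng$ are injective, so $f$ is injective. Epimorphisms in $\RegRng$ are also epimorphisms in $\Rng$: a ring homomorphism out of a regular ring preserves pseudo-inverses, so two ring maps $S\rightrightarrows T$ agreeing on $R$ factor through the regular subring of $T$ generated by their images. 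So it remains to prove that an injective ring epimorphism $R\to S$ between perfect regular rings is surjective.

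\textbf{Step 1: reduce to stalks.} For a regular ring, the Pierce spectrum is the maximal spectrum, and stalks are the quotient fields. Let $n$ be a maximal ideal of $S$ and $m\coloneqq f^{-1}(n)$; in a regular ring every prime is maximal, so $m$ is maximal. Epimorphisms of rings are stable under pushout, so $R/m\to S\otimes_R R/m = S/mS$ is an epimorphism. Since $S/mS$ is regular, $S/mS\to S/n$ is a localization (regular rings have all localizations at primes equal to quotients), hence epic. The composite $R/m\to S/n$ is therefore an epimorphism of fields, i.e.\ a purely inseparable extension. Because $R/m$ is perfect, it is an isomorphism (as in Lemma~\ref{l:perfect-fields-balanced}). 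Thus $f$ induces isomorphisms on all residue fields.

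\textbf{Step 2: reduce surjectivity to the spectrum.} I will use the known characterisation that a ring epimorphism $R\to S$ with $\operatorname{Spec}S\to\operatorname{Spec}R$ bijective, and isomorphic residue fields, is an isomorphism provided the map is flat. Over a regular ring every module is flat (regular rings are absolutely flat), so $f$ is a flat epimorphism. Flat epimorphisms are determined by their image in the spectrum, and the image is generization-closed. In a regular ring the spectrum is Hausdorff, so the image is closed under generization trivially. Injectivity of $f$ means the image is dense. The image of a flat epimorphism from a regular ring is quasi-compact (here $S$ is quasi-compact), hence closed in the Stone space, hence everything. A flat epimorphism that is surjective on spectra is faithfully flat. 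A faithfully flat epimorphism is an isomorphism: $S\otimes_R S\cong S$ implies $S\otimes_R(S/R)=0$, and faithful flatness gives $S/R=0$.

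The main obstacle is Step 2, namely ensuring surjectivity on maximal spectra. Alternatively, surjectivity on spectra can be checked pointwise: for $m$ maximal in $R$, the ring $S/mS$ is nonzero because $R/m\to S/mS$ is an epimorphism from a perfect field. Such an epimorphism has target either $0$ or a product-like regular ring with each residue field equal to $R/m$. Injectivity of $f$ together with the local character of regularity (the stalk of $R$ at $m$ embeds in the stalk of $S$) should rule out $S/mS=0$. Step 1's use of perfectness is the only point where perfectness enters, so I would verify it carefully.
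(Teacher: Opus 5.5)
\paragraph{The gap.} The problem is your claim that epimorphisms in $\RegRng$ are epimorphisms in $\Rng$. This is false in general. The paper notes that the Frobenius endomorphism of $\mathbb{F}_p(t)$ is epic in $\RegRng$; equivalently, so is the inclusion $K=\mathbb{F}_p(t^p)\into L=\mathbb{F}_p(t)$. But in $L\otimes_K L\cong L[x]/((x-t)^p)$ the element $t\otimes 1-1\otimes t$ is a nonzero nilpotent. So the two coprojections differ, and the map is not epic in $\Rng$. Your justification fails exactly here: the two images are regular subrings of $L\otimes_K L$, but the subring they generate is the whole ring, which is not regular. There is no ``regular subring generated by'' the images in general.

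This is not a side issue. Step~2 uses only that $f$ is an injective homomorphism of regular rings which is epic in $\Rng$. It never uses perfectness, and it never uses the conclusion of Step~1. Applied verbatim to the Frobenius of $\mathbb{F}_p(t)$ (injective, flat, bijective on one-point spectra), it would prove that $\RegRng$ itself is balanced, which is false. Your remark that perfectness enters only in Step~1 is the symptom: the output of Step~1 is discarded, and the whole content of the proposition sits in the unproved claim. Step~1 also invokes pushout-stability of epis in $\Rng$. That part can be repaired inside $\RegRng$: $S/mS$ is regular, so it is also the pushout in $\RegRng$, and epis there are pushout-stable.

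\paragraph{Two ways to close the gap.} The first is to prove $\Rng$-epicness using perfectness of $R$. For each maximal ideal $m$ of $R$ we have $(S\otimes_R S)_m\cong S_m\otimes_{R_m}S_m$, where $R_m=R/m$ is a perfect field and $S_m$ is reduced. Hence $S\otimes_R S$ is reduced. It therefore embeds in its regular reflection, which is the cokernel pair of $f$ in $\RegRng$, so the two coprojections agree. This is the argument the paper uses for $k$-algebras in characteristic $0$. Your faithful-flatness argument then finishes the proof.

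The second stays in $\RegRng$ and is closer to the paper.
\begin{enumerate}
\item \emph{Surjectivity on spectra.} Your compact-into-Hausdorff argument works: the image is dense because $f$ is injective, and closed because $\operatorname{Spec}S$ is compact and $\operatorname{Spec}R$ is a Stone space. It is slicker than the paper's idempotent argument.
\item \emph{Injectivity on spectra.} Suppose $n_1,n_2$ lie over $m$. The repaired Step~1 identifies each $S/n_i$ with $R/m$ compatibly with $f$. This gives two maps $S\to R/m$ in $\RegRng$ agreeing on $R$, so $n_1=n_2$. The paper uses amalgamation of fields at this point.
\item \emph{Conclusion.} $S_m=S/mS$ is a nonzero regular ring with a unique maximal ideal, hence the field $S/n$. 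So $f_m\colon R/m\to S_m$ is the isomorphism of Step~1 for every $m$, and $f$ is an isomorphism.
\end{enumerate}
The paper reaches the same stalkwise conclusion via Pierce's sheaf representation.
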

\begin{proof}
Recall from \cite[Theorem~10.3]{Pierce1967} that $\RegRng$ is dually equivalent to the category of regular ringed spaces. A \emph{regular ringed space} is a pair $(X, J)$ where $X$ is a Stone space and $J$ is a sheaf of fields on $X$. A \emph{morphism of regular ringed spaces} $(f,f^{\sharp})\colon (X,J)\to (Y,K)$ consists of a continuous map $f\colon X\to Y$ and a morphism $f^{\sharp}\colon K\to f_{*}J$ of sheaves of fields over $Y$, where $f_{*}$ denotes the direct image functor induced by $f$. Under this dual equivalence, the sheaf associated to a regular ring~$R$ is defined on the Stone space dual to the Boolean algebra of idempotents of $R$, whose points can be identified with the maximal ideals of $R$. The stalk of this sheaf at a maximal ideal $m$ is the field $R/m$.

To begin with, suppose $h\colon R\to S$ is epi-monic in $\RegRng$. We can identify $R$ with an epic subalgebra of $S$, and $h$ with the inclusion $R\into S$. Let $X_{R}$ and $X_{S}$ be the Stone spaces associated with $R$ and $S$, respectively. We claim that the continuous map $X_{S}\to X_{R}$ induced by $h$ is a homeomorphism; that is, for every maximal ideal~$m$ of $R$ there is a unique maximal ideal $n$ of~$S$ such that $n\cap R = m$. 
To see that such a maximal ideal $n$ exists, it suffices to prove that the ideal $\overline{m}$ of $S$ generated by~$m$ is proper, for then it can be extended to a maximal ideal $n$ satisfying $n\cap R \supseteq m$, and so $n\cap R = m$ by maximality of~$m$. Suppose that $1\in \overline{m}$, i.e., $1=\sum{a_{i}b_{i}}$ for finitely many elements $a_{i}\in m$ and $b_{i}\in S$. Let $I\subseteq R$ be the ideal generated by the~$a_{i}$'s. In a regular ring, every finitely generated ideal is principal and is generated by an idempotent element \cite[Theorem~1.1]{Goodearl1991}. If $e\in R$ is an idempotent element that generates $I$, then $ex=x$ for every $x\in I$. Therefore,
\[
1 =\textstyle\sum{a_{i}b_{i}} = \sum{(ea_{i})b_{i}} = e \sum{a_{i}b_{i}} = e\cdot 1 = e
\]
and so $1\in I \subseteq m$, contradicting the fact that $m$ is maximal. Hence, $\overline{m}$ is proper. 

Next, assume $n_{1},n_{2}$ are maximal ideals of $S$ satisfying $n_{1}\cap R = m = n_{2}\cap R$. 
Then $L_{1}\coloneqq S/n_{1}$ and $L_{2}\coloneqq S/n_{2}$ are field extensions of $K\coloneqq R/m$. Since $\Field$ has the amalgamation property, there is a field $F$ such that the span of field extensions $L_{1}\hookleftarrow K \into L_{2}$ can be completed to a commutative diagram as displayed below.
\[\begin{tikzcd}[row sep=1em, column sep=1em]
{} & L_{1} \arrow[hookrightarrow]{dr} & \\
K \arrow[hookrightarrow]{ur} \arrow[hookrightarrow]{dr} & & F \\
{} & L_{2} \arrow[hookrightarrow]{ur} &
\end{tikzcd}\]
Define the arrows $g_{1}\colon S\to L_{1}\into F$ and $g_{2}\colon S\to L_{2}\into F$ in $\RegRng$, where the first morphisms in the two composites are the quotient maps. 
The restrictions of $g_{1}$ and~$g_{2}$ to~$R$ coincide, however, if $n_{1}\neq n_{2}$ there exists $x\in n_{1}\setminus n_{2}$ and so $g_{1}(x) = 0$ while $g_{2}(x)\neq 0$. As the inclusion $R\into S$ is epic, we conclude that $n_{1}=n_{2}$. 

Let $X$ be any Stone space with dual Boolean algebra $B$. The dual equivalence between regular ringed spaces and regular rings restricts to an equivalence between $\sf{Sh}_{\Field}(X)$, the category of sheaves of fields over $X$, and the category $\RegRng_{B}$ of $B$-based regular rings. A \emph{$B$-based regular ring} is a pair $(R,\alpha)$ where $R$ is a regular ring and $\alpha$ is an isomorphism from $B$ to the Boolean algebra of idempotents of $R$. An arrow $(R,\alpha)\to (R',\alpha')$ in $\RegRng_{B}$ is a homomorphism $j\colon R\to R'$ such that $j(\alpha(e)) = \alpha'(e)$ for every $e\in B$. Going back to our epi-inclusion $h\colon R\into S$, and taking as $B$ the Boolean algebra of idempotents of $S$, we can regard $h$ as an arrow in $\RegRng_{B}$. As $h$ is epi-monic in $\RegRng$, it is also epi-monic in $\RegRng_{B}$. The morphism $H$ of sheaves of fields corresponding to $h$ is epi-monic, hence it induces epi-monos at every stalk. Suppose for a moment that the stalks are perfect fields; then Lemma~\ref{l:perfect-fields-balanced} implies that $H$ is an isomorphism, and so $h$ is an isomorphism too.

Finally, let $h\colon R\to S$ be epi-monic in $\RegRng_{\pi}$. As the latter is an $\omega$-orthogonality class in $\RegRng$ associated with a set of epi-monos, and monos are pushout stable in $\RegRng$ \cite[Theorem~1.6]{Cornish1977}, it follows that $\RegRng_{\pi}$ is mono-reflective in $\RegRng$; cf.\ \cite[\S 1.37]{ar94book}. By Lemma~\ref{l:basic-props-mono-reflections}, $h$ is epi-monic in $\RegRng$. Therefore, the previous argument shows that $h$ is an isomorphism in $\RegRng$, and a fortiori in $\RegRng_{\pi}$.
\end{proof}

Since $\RegRng$ has transferable monos \cite[Theorem~1.6]{Cornish1977}, it follows from Corollary~\ref{cor:S-perp-Beth-comp} that $\RegRng_{\pi}$ is the Beth companion of $\RegRng$, and even the strong Beth companion because $\RegRng$ satisfies \AP. We can also deduce from Theorem~\ref{t:transfer-quasi-variety} that $\RegRng_{\pi}$ is equivalent to a (mono-sorted, finitary) variety of algebras. For this, we need to show that the rings $U_{p}$ are regular projective in $\RegRng$. That is, for every surjection $h\colon A\to B$ in $\RegRng$, if $b\in B$ satisfies $pb=0$, there exists $a\in A$ such that $pa=0$ and $h(a)=b$. In turn, this is a consequence of Lemma~\ref{l:lifting-p-annihilation}: if $a'\in A$ is any element satisfying $h(a')=b$, then $a\coloneqq (1-e)a'$ satisfies $pa=0$ and
\[
h(a)=(1-e)h(a')=(1-e)b=b - pp^{*}b = b-0 =b.
\]
For an explicit equational axiomatisation of a variety equivalent to $\RegRng_{\pi}$, see \cite[Theorem~3.14]{CKM2026rings}.  

\subsubsection*{Reduced rings}
Let $\RedRng$ denote the full subcategory of $\Rng$ consisting of reduced rings, i.e.\ those satisfying $x^{2}=0 \, \IMP \, x=0$. Reduced rings form a sub-quasivariety of rings, hence an lfp category. We claim that regular rings are a mono-reflective subcategory of reduced rings and the inclusion $\RegRng\into \RedRng$ is an lfp morphism. Since lfp morphisms compose, and so do mono-reflective subcategories, it will follow from the previous example that $\RegRng_{\pi}$ is the strong Beth companion of $\RedRng$.

The inclusion $\RegRng\into \Rng$ preserves limits and directed colimits. This follows, e.g., by observing that $\RegRng$ is the $\omega$-orthogonality class in $\Rng$ induced by the single ring homomorphism
\[
\Z[x] \to \Z[x,y]/\langle x^{2}y-x, y^{2}x-y\rangle
\]
that sends the generator $x$ to itself.\footnote{An equivalent description of regular rings as an orthogonality class in rings can be found in~\cite[Proposition~4.41]{BSY}.}
Moreover, the inclusion $\RedRng\into \Rng$ reflects limits and colimits because it is fully faithful. Thus, the inclusion $\RegRng\into \RedRng$ preserves limits and directed colimits, i.e., it is an lfp morphism. To see that $\RegRng$ is mono-reflective in $\RedRng$, it suffices to note that every reduced ring can be embedded in a regular one. This is a well-known fact: just observe that every reduced ring embeds in a Cartesian product of fields (consider the product of the fields of fractions of the integral domains obtained by quotienting at prime ideals, and recall that the intersection of all prime ideals in a reduced ring is trivial as it coincides with the nilradical), and any Cartesian product of fields is a regular ring.

\subsubsection*{Reduced $k$-algebras} 

Fix a field $k$, viewed as an object of $\Rng$. The category of \emph{$k$-algebras} can be identified with the coslice category $k/\Rng$. Similarly, reduced and regular $k$-algebras can be identified with the coslices $k/\RedRng$ and $k/\RegRng$, respectively. We claim that, if the field~$k$ has characteristic~$0$, then regular $k$-algebras are the strong Beth companion of reduced $k$-algebras.

\begin{lemma}\label{l:monoreflection-coslices}
Let $\cat$ be a mono-reflective subcategory of $\catt$ in the category of lfp categories. The following statements hold:
\begin{enumerate}[label=(\roman*)]
\item\label{i:coslice-adj} for any object $c$ of $\cat$, the coslice $c/\cat$ is a mono-reflective subcategory of $c/\catt$ in the category of lfp categories;
\item\label{i:coslice-TM} if $\cat$ has transferable monos, then so does $c/\cat$.
\end{enumerate}
\end{lemma}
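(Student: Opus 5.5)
For \ref{i:coslice-adj}, I will write $R\colon\cat\into\catt$ for the inclusion, $L$ for its left adjoint and $\eta$ for the unit, whose components are monic. Since $c$ lies in $\cat$, the coslice $c/\cat$ is the full subcategory of $c/\catt$ on those objects $(X, x\colon c\to X)$ with $X$ in $\cat$, and it is replete whenever $\cat$ is. To show it is reflective, I will send an object $(Y, y\colon c\to Y)$ of $c/\catt$ to $(L Y, \bar y)$, where $\bar y\colon c\to LY$ is the transpose of $\eta_Y\circ y$; equivalently $R\bar y=\eta_Y\circ y$, using $c\cong LRc$. The unit at $(Y,y)$ is $\eta_Y$, which is a coslice map by construction. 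Its universal property will follow from that of $\eta_Y$: given a coslice map $g\colon(Y,y)\to(X,x)$ with $X$ in $\cat$, the unique factorisation $\bar g\colon LY\to X$ satisfies $\bar g\circ\bar y = x$, because both sides transpose to $g\circ y=x$. The unit is monic in $c/\catt$ because the forgetful functor $c/\catt\to\catt$ is faithful, so it reflects monos.

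For the lfp part I will use the standard facts that a coslice of an lfp category is lfp, and that the forgetful functor $c/\catt\to\catt$ creates connected limits and directed colimits, while the products and terminal object of $c/\catt$ are computed from those of $\catt$ (products as the product in $\catt$ with the induced map from $c$, terminal object $c\to 1$). Since $R$ preserves limits and directed colimits, and the structure map $c\to\lim X_i$ is determined by the cone, the inclusion $c/\cat\into c/\catt$ preserves limits and directed colimits, so it is an lfp morphism. The only point needing care is the empty limit and products, where the coslice limit is not created but computed; it suffices to observe that $R$ preserves the terminal object and products, so the inclusion preserves them as well.

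For \ref{i:coslice-TM}, take a span $\alpha\colon(X,x)\to(Y,y)$, $\beta\colon(X,x)\to(Z,z)$ in $c/\cat$ with $\alpha$ monic. First I need $\alpha$ to be monic in $\cat$. This holds because the forgetful functor $c/\cat\to\cat$ preserves monos: it preserves pullbacks, being a connected limit, and a map is monic exactly when its kernel pair is trivial. By \TM\ in $\cat$ there are $\delta\colon Z\to W$ and $\gamma\colon Y\to W$, with $\gamma$ monic and $\delta\circ\beta=\gamma\circ\alpha$. (The roles of $\gamma$ and $\delta$ should be matched to the square in eq.~\eqref{eq:generic-amalgam}.) I then equip $W$ with the structure map $w\coloneqq\gamma\circ y=\gamma\circ\alpha\circ x=\delta\circ\beta\circ x=\delta\circ z$, which makes $\gamma$ and $\delta$ coslice maps. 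Finally, $\gamma$ is monic in $c/\cat$ because the faithful forgetful functor reflects monos. I expect the main obstacle to be the bookkeeping for limits in part \ref{i:coslice-adj}; everything else is routine transport along the forgetful functor.
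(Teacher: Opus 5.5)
Your proof is correct and follows the paper's argument. You use the same reflector $(Y,y)\mapsto \eta_Y\circ y$ with unit $\eta_Y$, and you transport monicity and directed colimits along the forgetful functor. For (ii) you also transport \TM\ along $c/\cat\to\cat$: you lift an arbitrary completing square directly, whereas the paper uses pushout-stability of monos and the fact that this functor creates pushouts. The underlying idea is the same, and your version does not need pushouts.

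The one slip is cosmetic. The forgetful functor from a coslice creates \emph{all} limits, products and the terminal object included; it is for slices that these fail to be created. In any case the limit bookkeeping you flag as the main obstacle is unnecessary, because you have already exhibited the inclusion as a right adjoint, so it preserves limits automatically.
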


\begin{proof}
Recall that, for any lfp category $\sf E$ and any object $e$ of $\sf E$, the coslice $e/{\sf E}$ is also lfp \cite[Proposition~1.57]{ar94book}. Moreover, the forgetful functor $e/{\sf E}\to {\sf E}$ creates limits and connected colimits (see e.g.\ \cite[Proposition~3.3.8]{Riehl2016book}). In particular, it creates monomorphisms, directed colimits and pushouts.

\ref{i:coslice-adj} Denote by $\eta$ the unit of the adjunction formed by the inclusion $\cat \into \catt$ and its left adjoint. The inclusion $c/\cat \into c/\catt$ admits a left adjoint that sends an object $f\colon c\to A$ to the composite $\eta_{A} \circ f$. The unit at~$f$ of the latter reflection is $\eta_{A}$, regarded as an arrow in $c/\catt$. Since $\eta_{A}$ is monic in $\catt$ and the forgetful functor $c/\catt\to \catt$ creates monomorphisms, it follows that $c/\cat$ is mono-reflective in $c/\catt$. Moreover, the inclusion $c/\cat\into c/\catt$ preserves directed colimits because $\cat\into \catt$ does, and directed colimits are a particular type of connected colimits.

\ref{i:coslice-TM} Suppose that $\cat$ has transferable monos. Equivalently, monomorphisms in~$\cat$ are stable under all pushouts. Because the forgetful functor $c/\cat\to \cat$ creates monomorphisms and pushouts, we conclude that $c/\cat$ has transferable monos.
\end{proof}

As mentioned in the preceding examples, $\RegRng$ has transferable monos and is a mono-reflective subcategory of $\RedRng$ in the category of lfp categories.
Therefore, the previous lemma entails that $k/\RegRng$ has transferable monos and is a full mono-reflective subcategory of $k/\RedRng$ in the category of lfp categories. Regarding balancedness, we note the following fact.

\begin{proposition}
If $k$ has characteristic $0$, then $k/\RegRng$ is balanced.
\end{proposition}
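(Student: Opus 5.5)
We must show that an epi-mono $h\colon (R,\iota_R)\to (S,\iota_S)$ in $k/\RegRng$ is an isomorphism. The plan is to reuse the argument given above for $\RegRng_{\pi}$, with perfectness of stalks now supplied by characteristic $0$.

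First, we reduce to statements about the underlying regular rings. The forgetful functor $k/\RegRng\to\RegRng$ creates monomorphisms, as in the proof of Lemma~\ref{l:monoreflection-coslices}. So $h$ is injective.

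For epicity we use that $\RegRng$ is a variety, so pushouts exist. Given $g_1,g_2\colon S\to A$ in $\RegRng$ with $g_1h=g_2h$, the composites $g_1\iota_S=g_2\iota_S$ make $A$ an object of $k/\RegRng$, and $g_1,g_2$ become arrows of the coslice. Hence $g_1=g_2$, and $h$ is an epi-mono in $\RegRng$.

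Second, we run the Pierce-sheaf argument from the proof that $\RegRng_{\pi}$ is balanced. Identify $h$ with an epic inclusion $R\into S$. That argument shows that $X_S\to X_R$ is a homeomorphism. It uses only that $h$ is an epic inclusion in $\RegRng$, together with amalgamation in $\Field$ and the fact that finitely generated ideals of regular rings are generated by idempotents.

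Passing to $B$-based regular rings, with $B$ the idempotents of $S$, $h$ becomes an epi-mono of sheaves of fields over a common Stone space. Hence it induces epi-monos $R/m\to S/n$ in $\Field$ at every stalk.

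Third, we identify the stalks. Since $k$ is a field and $R$ receives a ring map from $k$, every quotient $R/m$ is a field extension of $k$, hence has characteristic $0$. Fields of characteristic $0$ are perfect, so Lemma~\ref{l:perfect-fields-balanced} applies: every stalk map $R/m\to S/n$ is an epi-mono between perfect fields, hence an isomorphism. The sheaf morphism is then an isomorphism, so $h$ is an isomorphism of rings. Because the forgetful functor from the coslice reflects isomorphisms, $h$ is an isomorphism in $k/\RegRng$.

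The main obstacle is making sure the earlier argument applies verbatim to an arbitrary epi-mono in $\RegRng$. That proof was phrased assuming the stalks are perfect fields, which is exactly what characteristic $0$ now guarantees. I would also check that a stalkwise epimorphism of sheaves of fields yields epimorphisms in $\Field$ on stalks. I would justify this as in the earlier proof, via the equivalence $\mathsf{Sh}_{\Field}(X)\simeq\RegRng_B$.
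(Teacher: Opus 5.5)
Your proof is correct, but it takes a different route from the paper's. You first check that an epi-mono in $k/\RegRng$ is an epi-mono in $\RegRng$; your epicity argument is fine, and the remark about pushouts is never used. You then rerun the Pierce-sheaf argument from the proof that $\RegRng_{\pi}$ is balanced, getting perfect stalks from the fact that each $R/m$ is a field extension of $k$.

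This can be shortened. That same observation shows that every regular $k$-algebra, for $k$ of characteristic $0$, is a perfect regular ring. So $h$ is an epi-mono in the full subcategory $\RegRng_{\pi}$, and the earlier proposition applies as it stands.

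The check you flag in your last paragraph is also immediate. Take two field maps out of $S/n$ that agree on $R/m$, and precompose them with $S\to S/n$. This gives two maps out of $S$ that agree on $R$, so they are equal. Hence the stalk maps are epic in $\Field$ without any sheaf theory.

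The paper instead argues within commutative algebra:
\begin{enumerate}
\item For an epimorphism $f\colon A\to B$ in $k/\RegRng$, it shows that $B\otimes_{A}B$ is already reduced. It localises at the maximal ideals of $A$, where each $A_{m}$ is a field of characteristic $0$, hence perfect. It then invokes Bourbaki's theorem that tensor products of reduced algebras over a perfect field are reduced.
\item Consequently the cokernel pair in $k/\RegRng$ is the one computed in $k/\Rng$, so $f$ is an epimorphism in $\Rng$.
\item Tarizadeh's theorem, that ring epimorphisms with regular domain are surjective, finishes the proof.
\end{enumerate}

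Characteristic $0$ enters both proofs in the same way, through perfectness of the relevant residue fields. Your route is more economical and self-contained within the paper, and it shows the proposition is essentially a corollary of the balancedness of $\RegRng_{\pi}$. The paper's route avoids Pierce duality altogether and shows directly why epimorphisms of regular $k$-algebras remain epimorphisms in $\Rng$.
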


\begin{proof}
Observe that $k/\RegRng$ is a reflective subcategory of $k/\Rng$, although not a mono-reflective one. This follows from the proof of Lemma~\ref{l:monoreflection-coslices}, recalling that $\RegRng$ is a reflective subcategory of $\Rng$ (and even an $\omega$-orthogonality class in~$\Rng$). In particular, the inclusion $k/\RegRng\into k/\Rng$ preserves monomorphisms. If we show that it sends epis to surjections, it will follow that it sends epi-monos to isomorphisms, and therefore $k/\RegRng$ is balanced.

Let $f\colon A \to B$ be an epimorphism in $k/\RegRng$. The cokernel pair of $f$ is given by taking the pushout of $f$ along itself in $k/\Rng$, which yields the tensor product $R\coloneqq B\otimes_{A} B$, and then applying the left adjoint to the inclusion $k/\RegRng \into k/\Rng$. Write $\nu(R)$ for the resulting regular $k$-algebra, and let $\eta\colon R \to \nu(R)$ be the reflection morphism. The kernel of $\eta$ is the nilradical of $R$ (as can be seen by factoring $k/\RegRng \into k/\Rng$ via the mono-reflective subcategory $k/\RegRng \into k/\RedRng$). 

We claim that $R$ is reduced, and so $\eta$ is injective. It is enough to show that $R$ embeds in a product of reduced rings. Recall that the canonical map from any $A$-module to the product of its localisations at the maximal ideals of $A$ is injective. In particular, there is an injective homomorphism
\[
R \into \prod_{m\in \mathrm{Max}(A)}{R_{m}}
\] 
where $\mathrm{Max}(A)$ is the set of maximal ideals of $A$, and $R_{m}$ is the localisation of $R$ at~$A\setminus m$. It suffices to prove that each $R_{m}$ is reduced. Localisations commute with tensor products, hence we get
\[
R_{m} = (B\otimes_{A} B)_{m} \cong B_{m} \otimes_{A_{m}} B_{m}.
\]
Since $A$ is regular, the localisation $A_{m}$ is a field (see e.g.\ \cite[Theorem~1.16]{Goodearl1991}), and so it is a field extension of $k$. Because $k$ has characteristic $0$, so does $A_{m}$; in particular, $A_{m}$ is a perfect field.
Moreover, regular rings are closed under localisations. Just observe that, if $C$ is a regular ring and $S\subseteq C$ is a multiplicative subset, then for each element $x=\frac{a}{s}\in S^{-1}C$, there exists $b\in C$ such that $a=a^{2}b$. The element $y\coloneqq sb$ then satisfies
\[
x = \frac{a}{s} = \frac{a^{2}b}{s} = \bigg(\frac{a}{s}\bigg)^{2} (sb) = x^{2} y,
\]
showing that $S^{-1}C$ is regular. Therefore, $B_{m}$ is a regular ring and, in particular, reduced. For any perfect field $K$, the tensor product of reduced $K$-algebras is reduced (see e.g.\ \cite[Ch.~V, \S 15, Th\'eor\`eme~3]{BourbakiAlgebra4-7}). Therefore, $R_{m}$ is reduced. 

Now, consider the cokernel pair of $f$ in $k/\Rng$, as displayed below.
\[\begin{tikzcd}
B \arrow[yshift=3pt]{r}{c_{0}} \arrow[yshift=-3pt]{r}[swap]{c_{1}} & R
\end{tikzcd}\]
The cokernel pair of $f$ in $k/\RegRng$ is $(\eta\circ c_{0},\eta\circ c_{1})$. As $f$ is epic in $k/\RegRng$, we get $\eta\circ c_{0} = \eta\circ c_{1}$, and thus $c_{0}=c_{1}$ because $\eta$ is injective. That is, $f$ is an epimorphism in $k/\Rng$. Viewing epimorphisms as connected colimits, it follows that~$f$ is an epimorphism in $\Rng$. In turn, every epimorphism in $\Rng$ whose domain is regular is a surjection; see e.g.\ \cite[Theorems~2.1 and~3.12]{Tarizadeh2022}.
\end{proof}

Recall from Section~\ref{s:preliminaries-amalgamation} that, in the presence of the amalgamation property (and, a fortiori, in any category with transferable monos), extremal monomorphisms are regular monomorphisms, and so properties \ES~and \SES~are equivalent. Therefore, if~$k$ has characteristic~$0$, $k/\RegRng$ is the strong Beth companion of $k/\RedRng$.

\appendix
\section{Acronyms}\label{s:acronyms}

In this short appendix, we list the balancedness- and amalgamation-type properties introduced in the paper. 

\begin{table}[H]
\centering
\begin{tabular}{ |c|c|c| } 
 \hline
 \ES & ``Epimorphisms are surjective'' & \multirow{2}{*}{Definition~\ref{d:ES-and-SES}} \\ 
 \cline{1-2}
 \SES & Strong \ES~property &  \\ 
 \hline
\end{tabular}
\caption{Balancedness-type properties}
\end{table}

\begin{table}[H]
\centering
\begin{tabular}{ |c|c|c| } 
 \hline
 \AP & Amalgamation property & \multirow{3}{*}{Definition~\ref{d:AP-SAP-IPA}} \\ 
  \cline{1-2}
 \SAP & Strong amalgamation property &  \\ 
  \cline{1-2}
 \IPA & Intersection property of amalgamations &  \\ 
 \hline
 \EAP & Epimorphism amalgamation property & Definition~\ref{d:EAP} \\
 \hline
 \TEM & Transferable epi-monos & Definition~\ref{d:TEM} \\
 \hline
 \TM & Transferable monos & Remark~\ref{r:TM} \\
 \hline
\end{tabular}
\caption{Amalgamation-type properties}
\end{table}

\bibliographystyle{alphaurl-shortnames}

\begin{thebibliography}{DLLNS21}

  \bibitem[AHS06]{ahs06book}
  J.~Ad{\'a}mek, H.~Herrlich, and G.~Strecker.
  \newblock {\em {Abstract and concrete categories: The joy of cats}}, volume~17.
  \newblock Reprints in Theory and Applications of Categories, 2006.
  \newblock Originally published by John Wiley and Sons, New York, 1990.
  
  \bibitem[AHS09]{AHS2009}
  J.~Ad{\'a}mek, M.~H{\'e}bert, and L.~Sousa.
  \newblock The orthogonal subcategory problem and the small object argument.
  \newblock {\em Applied Categorical Structures}, 17(3):211--246, 2009.
  \newblock \href {https://doi.org/10.1007/s10485-008-9153-4} {\path{doi:10.1007/s10485-008-9153-4}}.
  
  \bibitem[ALN25]{ALN25}
  B.~Ahrens, P.~L. Lumsdaine, and P.~R. North.
  \newblock Comparing semantic frameworks for dependently-sorted algebraic theories.
  \newblock In O.~Kiselyov, editor, {\em Programming Languages and Systems}, pages 3--22, Singapore, 2025. Springer Nature Singapore.
  \newblock \href {https://doi.org/10.1007/978-981-97-8943-6_1} {\path{doi:10.1007/978-981-97-8943-6_1}}.
  
  \bibitem[AP98]{AP1998}
  J.~Ad{\'a}mek and H.-E. Porst.
  \newblock Algebraic theories of quasivarieties.
  \newblock {\em J. Algebra}, 208(2):379--398, 1998.
  \newblock \href {https://doi.org/10.1006/jabr.1998.7499} {\path{doi:10.1006/jabr.1998.7499}}.
  
  \bibitem[AR94]{ar94book}
  J.~Ad{\'a}mek and J.~Rosick{\'y}.
  \newblock {\em {Locally Presentable and Accessible Categories}}, volume 189 of {\em London Mathematical Society Lecture Notes Series}.
  \newblock Cambridge University Press, 1994.
  
  \bibitem[AS04]{AS2004}
  J.~Ad{\'a}mek and L.~Sousa.
  \newblock On reflective subcategories of varieties.
  \newblock {\em Journal of Algebra}, 276(2):685--705, 2004.
  \newblock \href {https://doi.org/10.1016/j.jalgebra.2003.09.039} {\path{doi:10.1016/j.jalgebra.2003.09.039}}.
  
  \bibitem[Bar89]{Barr1989}
  M.~Barr.
  \newblock Models of {H}orn theories.
  \newblock In {\em Categories in computer science and logic ({B}oulder, {CO}, 1987)}, volume~92 of {\em Contemp. Math.}, pages 1--7. Amer. Math. Soc., Providence, RI, 1989.
  
  \bibitem[BH06]{BH2006}
  W.~J. Blok and E.~Hoogland.
  \newblock The {B}eth property in algebraic logic.
  \newblock {\em Studia Logica}, 83(1-3):49--90, 2006.
  
  \bibitem[Bir84]{Bird1984}
  G.~J. Bird.
  \newblock {\em Limits in 2-Categories of Locally-Presentable Categories}.
  \newblock PhD thesis, University of Sydney, 1984.
  \newblock Circulated by the Sydney Category Theory Seminar.
  
  \bibitem[BKR15]{BKR2015}
  M.~Barr, J.~F. Kennison, and R.~Raphael.
  \newblock Limit closures of classes of commutative rings.
  \newblock {\em Theory Appl. Categ.}, 30:Paper No. 8, 229--304, 2015.
  
  \bibitem[Bou81]{BourbakiAlgebra4-7}
  N.~Bourbaki.
  \newblock {\em \'El\'ements de math\'ematique. Alg\`ebre. Chapitres 4 \`a{} 7.}
  \newblock Masson, Paris, 1981.
  
  \bibitem[Bou10]{Bourke2010Codescent}
  J.~Bourke.
  \newblock {\em Codescent Objects in 2-Dimensional Universal Algebra}.
  \newblock {PhD} thesis, The University of Sydney, 2010.
  \newblock URL: \url{https://www.math.muni.cz/~bourkej/papers/JohnBThesis.pdf}.
  
  \bibitem[BSY]{BSY}
  R.~Burklund, T.~M. Schlank, and A.~Yuan.
  \newblock The chromatic {N}ullstellensatz.
  \newblock {\em Annals of Mathematics}.
  \newblock To appear.
  \newblock URL: \url{https://annals.math.princeton.edu/articles/22772}.
  
  \bibitem[Car86]{Cartmell1986GeneralisedAlgebraicTheories}
  J.~Cartmell.
  \newblock Generalised algebraic theories and contextual categories.
  \newblock {\em Annals of Pure and Applied Logic}, 32:209--243, 1986.
  \newblock \href {https://doi.org/10.1016/0168-0072(86)90053-9} {\path{doi:10.1016/0168-0072(86)90053-9}}.
  
  \bibitem[CKM26a]{CKM2026bis}
  L.~Carai, M.~Kurtzhals, and T.~Moraschini.
  \newblock A categorical description of simple {B}eth companions, 2026.
  \newblock \href {https://arxiv.org/abs/2605.09141} {\path{arXiv:2605.09141}}.
  
  \bibitem[CKM26b]{CKM2026rings}
  L.~Carai, M.~Kurtzhals, and T.~Moraschini.
  \newblock A completion of reduced commutative rings, 2026.
  \newblock \href {https://arxiv.org/abs/2605.12661} {\path{arXiv:2605.12661}}.
  
  \bibitem[CKM26c]{CKM2026}
  L.~Carai, M.~Kurtzhals, and T.~Moraschini.
  \newblock The theory of implicit operations, 2026.
  \newblock \href {https://arxiv.org/abs/2512.14326} {\path{arXiv:2512.14326}}.
  
  \bibitem[Cor77]{Cornish1977}
  W.~H. Cornish.
  \newblock Amalgamating commutative regular rings.
  \newblock {\em Comment. Math. Univ. Carolinae}, 18(3):423--436, 1977.
  
  \bibitem[DLLNS21]{DiLibertiEtAl2021PartialTheories}
  I.~Di~Liberti, F.~Loregian, C.~Nester, and P.~Soboci{\'n}ski.
  \newblock Functorial semantics for partial theories.
  \newblock {\em Proceedings of the ACM on Programming Languages}, 5(POPL):57:1--57:28, 2021.
  \newblock \href {https://doi.org/10.1145/3434338} {\path{doi:10.1145/3434338}}.
  
  \bibitem[FK72]{FK1972}
  P.~J. Freyd and G.~M. Kelly.
  \newblock Categories of continuous functors, {I}.
  \newblock {\em Journal of Pure and Applied Algebra}, 2(3):169--191, 1972.
  
  \bibitem[Fre72]{Freyd1972AspectsOfTopoi}
  P.~J. Freyd.
  \newblock Aspects of topoi.
  \newblock {\em Bulletin of the Australian Mathematical Society}, 7(1):1--76, 1972.
  \newblock \href {https://doi.org/10.1017/S0004972700044828} {\path{doi:10.1017/S0004972700044828}}.
  
  \bibitem[Fre02]{Freyd2002CartesianLogic}
  P.~J. Freyd.
  \newblock Cartesian logic.
  \newblock {\em Theoretical Computer Science}, 278(1--2):3--21, May 2002.
  \newblock \href {https://doi.org/10.1016/S0304-3975(00)00328-5} {\path{doi:10.1016/S0304-3975(00)00328-5}}.
  
  \bibitem[Fre25]{Frey2025DualityClans}
  J.~Frey.
  \newblock Duality for clans: An extension of {Gabriel--Ulmer} duality.
  \newblock {\em The Journal of Symbolic Logic}, pages 1--38, 2025.
  \newblock \href {https://doi.org/10.1017/jsl.2024.79} {\path{doi:10.1017/jsl.2024.79}}.
  
  \bibitem[Goo91]{Goodearl1991}
  K.~R. Goodearl.
  \newblock {\em von {N}eumann regular rings}.
  \newblock Robert E. Krieger Publishing Co., Inc., Malabar, FL, second edition, 1991.
  
  \bibitem[GU71]{GabrielUlmer1971}
  P.~Gabriel and F.~Ulmer.
  \newblock {\em Lokal pr{\"a}sentierbare Kategorien}, volume 221 of {\em Lecture Notes in Mathematics}.
  \newblock Springer-Verlag, Berlin and Heidelberg, 1971.
  \newblock \href {https://doi.org/10.1007/BFb0059396} {\path{doi:10.1007/BFb0059396}}.
  
  \bibitem[HAR01a]{HAR2001}
  M.~H{\'e}bert, J.~Adamek, and J.~Rosick\'y.
  \newblock More on orthogonality in locally presentable categories.
  \newblock {\em Cahiers de Topologie et G\'eom\'etrie Diff\'erentielle Cat\'egoriques}, 42(1):51--80, 2001.
  
  \bibitem[HAR01b]{HebertAdamekRosicky2001}
  M.~H{\'e}bert, J.~Ad{\'a}mek, and J.~Rosick{\'y}.
  \newblock More on orthogonality in locally presentable categories.
  \newblock {\em Cahiers de Topologie et G{\'e}om{\'e}trie Diff{\'e}rentielle Cat{\'e}goriques}, 42(1):51--80, 2001.
  \newblock URL: \url{https://www.numdam.org/item/CTGDC_2001__42_1_51_0/}.
  
  \bibitem[H{\'e}b93]{Hebert1993}
  M.~H{\'e}bert.
  \newblock Sur les op\'erations partielles implicites et leur relation avec la surjectivit\'e{} des \'epimorphismes.
  \newblock {\em Canad. J. Math.}, 45(3):554--575, 1993.
  \newblock \href {https://doi.org/10.4153/CJM-1993-029-3} {\path{doi:10.4153/CJM-1993-029-3}}.
  
  \bibitem[H{\'e}b98]{Hebert1998}
  M.~H{\'e}bert.
  \newblock On generation and implicit partial operations in locally presentable categories.
  \newblock {\em Appl. Categ. Structures}, 6(4):473--488, 1998.
  \newblock \href {https://doi.org/10.1023/A:1008653513618} {\path{doi:10.1023/A:1008653513618}}.
  
  \bibitem[HI67]{HI1967}
  J.~M. Howie and J.~R. Isbell.
  \newblock Epimorphisms and dominions. {II}.
  \newblock {\em J. Algebra}, 6:7--21, 1967.
  \newblock \href {https://doi.org/10.1016/0021-8693(67)90010-5} {\path{doi:10.1016/0021-8693(67)90010-5}}.
  
  \bibitem[HMT85]{HMT1985}
  L.~Henkin, J.~D. Monk, and A.~Tarski.
  \newblock {\em Cylindric algebras. {P}art {II}}, volume 115 of {\em Studies in Logic and the Foundations of Mathematics}.
  \newblock North-Holland Publishing Co., Amsterdam, 1985.
  
  \bibitem[Hoo00]{Hoogland2000}
  E.~Hoogland.
  \newblock Algebraic characterizations of various {B}eth definability properties.
  \newblock {\em Studia Logica}, 65(1):91--112, 2000.
  \newblock \href {https://doi.org/10.1023/A:1005295109904} {\path{doi:10.1023/A:1005295109904}}.
  
  \bibitem[Isb66]{Isbell1966}
  J.~R. Isbell.
  \newblock Epimorphisms and dominions.
  \newblock In {\em Proc. {C}onf. {C}ategorical {A}lgebra ({L}a {J}olla, {C}alif., 1965)}, pages 232--246. Springer-Verlag New York, Inc., New York, 1966.
  
  \bibitem[Joh82]{Johnstone1982}
  P.~T. Johnstone.
  \newblock {\em Stone spaces}, volume~3 of {\em Cambridge Studies in Advanced Mathematics}.
  \newblock Cambridge University Press, Cambridge, 1982.
  
  \bibitem[Kel89]{Kelly1989}
  G.~M. Kelly.
  \newblock Elementary observations on 2-categorical limits.
  \newblock {\em Bulletin of the Australian Mathematical Society}, 39(2):301--317, 1989.
  \newblock \href {https://doi.org/10.1017/S0004972700002781} {\path{doi:10.1017/S0004972700002781}}.
  
  \bibitem[KMPT83]{KMPT1983}
  E.~W. Kiss, L.~M{\'a}rki, P.~Pr{\"o}hle, and W.~Tholen.
  \newblock Categorical algebraic properties. {A} compendium of amalgamation, congruence extension, epimorphisms, residual smallness, and injectivity.
  \newblock {\em Stud. Sci. Math. Hung.}, 18:79--141, 1983.
  
  \bibitem[Lac10]{Lack2010}
  S.~Lack.
  \newblock A 2-categories companion.
  \newblock In {\em Towards higher categories}, volume 152 of {\em IMA Vol. Math. Appl.}, pages 105--191. Springer, New York, 2010.
  \newblock \href {https://doi.org/10.1007/978-1-4419-1524-5\_4} {\path{doi:10.1007/978-1-4419-1524-5\_4}}.
  
  \bibitem[Law68]{Lawvere1968}
  F.~W. Lawvere.
  \newblock Some algebraic problems in the context of functorial semantics of algebraic theories.
  \newblock In {\em Reports of the {M}idwest {C}ategory {S}eminar, {II}}, volume No. 61 of {\em Lecture Notes in Math.}, pages 41--61. Springer, Berlin-New York, 1968.
  
  \bibitem[LP09]{LackPower2009}
  S.~Lack and J.~Power.
  \newblock {Gabriel--Ulmer} duality and {Lawvere} theories enriched over a general base.
  \newblock {\em Journal of Functional Programming}, 19(3--4):265--286, 2009.
  \newblock \href {https://doi.org/10.1017/S0956796809007254} {\path{doi:10.1017/S0956796809007254}}.
  
  \bibitem[Mad67]{Maddox1967}
  B.~H. Maddox.
  \newblock Absolutely pure modules.
  \newblock {\em Proc. Amer. Math. Soc.}, 18:155--158, 1967.
  \newblock \href {https://doi.org/10.2307/2035245} {\path{doi:10.2307/2035245}}.
  
  \bibitem[Meg70]{Megibben1970}
  C.~Megibben.
  \newblock Absolutely pure modules.
  \newblock {\em Proc. Amer. Math. Soc.}, 26:561--566, 1970.
  \newblock \href {https://doi.org/10.2307/2037108} {\path{doi:10.2307/2037108}}.
  
  \bibitem[Met26]{Metcalfe2026}
  G.~Metcalfe.
  \newblock Interpolation and amalgamation.
  \newblock In B.~ten Cate, J.~C. Jung, P.~Koopmann, C.~Wernhard, and F.~Wolter, editors, {\em Theory and Applications of Craig Interpolation}. Ubiquity Press, 2026.
  \newblock \href {https://doi.org/10.5334/bdg} {\path{doi:10.5334/bdg}}.
  
  \bibitem[MP87a]{MP1987}
  M.~Makkai and A.~M. Pitts.
  \newblock Some results on locally finitely presentable categories.
  \newblock {\em Trans. Amer. Math. Soc.}, 299(2):473--496, 1987.
  \newblock \href {https://doi.org/10.2307/2000508} {\path{doi:10.2307/2000508}}.
  
  \bibitem[MP87b]{MakkaiPitts1987}
  M.~Makkai and A.~M. Pitts.
  \newblock Some results on locally finitely presentable categories.
  \newblock {\em Transactions of the American Mathematical Society}, 299(2):473--496, 1987.
  \newblock \href {https://doi.org/10.1090/S0002-9947-1987-0869216-2} {\path{doi:10.1090/S0002-9947-1987-0869216-2}}.
  
  \bibitem[Neu36]{vonNeumann1936}
  J.~V. Neumann.
  \newblock On regular rings.
  \newblock {\em Proceedings of the National Academy of Sciences of the United States of America}, 22(12):707--713, 1936.
  \newblock URL: \url{http://www.jstor.org/stable/86608}.
  
  \bibitem[Pie67]{Pierce1967}
  R.~S. Pierce.
  \newblock {\em Modules over commutative regular rings}, volume No. 70 of {\em Memoirs of the American Mathematical Society}.
  \newblock American Mathematical Society, Providence, RI, 1967.
  
  \bibitem[PV07]{PalmgrenVickers2007PartialHorn}
  E.~Palmgren and S.~J. Vickers.
  \newblock Partial {Horn} logic and {Cartesian} categories.
  \newblock {\em Annals of Pure and Applied Logic}, 145(3):314--353, March 2007.
  \newblock \href {https://doi.org/10.1016/j.apal.2006.10.001} {\path{doi:10.1016/j.apal.2006.10.001}}.
  
  \bibitem[RAB02]{RAB2022}
  J.~Rosick\'y, J.~Ad\'amek, and F.~Borceux.
  \newblock More on injectivity in locally presentable categories.
  \newblock {\em Theory Appl. Categ.}, 10:No. 7, 148--161, 2002.
  
  \bibitem[Reg26]{Reggio2026}
  L.~Reggio.
  \newblock On implicit operations and balanced categories.
  \newblock In preparation, 2026.
  
  \bibitem[Rei70]{Reid1969}
  G.~A. Reid.
  \newblock Epimorphisms and surjectivity.
  \newblock {\em Invent. Math.}, 9:295--307, 1969/70.
  \newblock \href {https://doi.org/10.1007/BF01425484} {\path{doi:10.1007/BF01425484}}.
  
  \bibitem[Rie16]{Riehl2016book}
  E.~Riehl.
  \newblock {\em Category theory in context}.
  \newblock Aurora Dover Modern Math Originals. Dover Publications, Inc., Mineola, NY, 2016.
  
  \bibitem[Rin71]{Ringel1971}
  C.~M. Ringel.
  \newblock Monofunctors as reflectors.
  \newblock {\em Trans. Amer. Math. Soc.}, 161:293--306, 1971.
  \newblock \href {https://doi.org/10.2307/1995944} {\path{doi:10.2307/1995944}}.
  
  \bibitem[Rob77]{Rob77}
  A.~Robinson.
  \newblock {\em Complete theories}.
  \newblock North-Holland Publishing Co., Amsterdam-New York-Oxford, second edition, 1977.
  \newblock With a preface by H.J. Keisler, Studies in Logic and the Foundations of Mathematics.
  
  \bibitem[Rot97]{Rothmaler1997}
  P.~Rothmaler.
  \newblock Purity in model theory.
  \newblock In {\em Advances in algebra and model theory ({E}ssen, 1994; {D}resden, 1995)}, volume~9 of {\em Algebra Logic Appl.}, pages 445--469. Gordon and Breach, Amsterdam, 1997.
  
  \bibitem[SAC68]{SAC1968}
  {\em S\'eminaire d'{A}lg\`ebre {C}ommutative dirig\'e{} par {P}ierre {S}amuel: 1967/1968. {L}es \'epimorphismes d'anneaux}.
  \newblock Secr\'etariat math\'ematique, Paris, 1968.
  
  \bibitem[Sto68]{Storrer1968}
  H.~H. Storrer.
  \newblock Epimorphismen von kommutativen {R}ingen.
  \newblock {\em Comment. Math. Helv.}, 43:378--401, 1968.
  \newblock \href {https://doi.org/10.1007/BF02564404} {\path{doi:10.1007/BF02564404}}.
  
  \bibitem[Szp30]{Szpilrajn1930}
  E.~Szpilrajn.
  \newblock Sur l'extension de l'ordre partiel.
  \newblock {\em Fundamenta Mathematicae}, 16(1):386--389, 1930.
  \newblock URL: \url{http://eudml.org/doc/212499}.
  
  \bibitem[Tar22]{Tarizadeh2022}
  A.~Tarizadeh.
  \newblock On flat epimorphisms of rings and pointwise localizations.
  \newblock {\em Mathematica}, 64(87)(1):129--138, 2022.
  \newblock \href {https://doi.org/10.24193/mathcluj.2022.1.14} {\path{doi:10.24193/mathcluj.2022.1.14}}.
  
  \bibitem[Ten25]{Tendas2025}
  G.~Tendas.
  \newblock Dualities in the theory of accessible categories.
  \newblock {\em Journal of Algebra}, 674:29--49, 2025.
  \newblock \href {https://doi.org/10.1016/j.jalgebra.2025.03.012} {\path{doi:10.1016/j.jalgebra.2025.03.012}}.
  
  \bibitem[Tho82]{Tholen1982}
  W.~Tholen.
  \newblock Amalgamations in categories.
  \newblock {\em Algebra Universalis}, 14(3):391--397, 1982.
  \newblock \href {https://doi.org/10.1007/BF02483940} {\path{doi:10.1007/BF02483940}}.
  
  \bibitem[Vol79]{Volger1979}
  H.~Volger.
  \newblock Preservation theorems for limits of structures and global sections of sheaves of structures.
  \newblock {\em Math. Z.}, 166(1):27--54, 1979.
  \newblock \href {https://doi.org/10.1007/BF01173845} {\path{doi:10.1007/BF01173845}}.
  
  \end{thebibliography}

\end{document}